\titlespacing{\paragraph}{%
  0pt}{
0.25\baselineskip}{
1em}
\titlespacing{\section}{%
0pt}{
0.2cm}{
0em}
\titlespacing{\subsection}{%
0pt}{
0.2cm}{
0em}
\titlespacing{\subsubsection}{%
0pt}{
0cm}{
0em}
\renewcommand{\@biblabel}[1]{[#1]\hfill}
\DeclareMathAlphabet{\pazocal}{OMS}{zplm}{m}{n}
\newtheoremstyle{thm}                                                           
{0.15cm}                                         
{0.15cm}                                         
{\itshape}      
{}                                      
{\bfseries}                             
{}                                      
{0.2cm}                                         
{\thmname{#1}~\thmnumber{#2}\thmnote{ (#3)}}%
\xpatchcmd{\proof}{\topsep6\p@\@plus6\p@\relax}{}{}{}
\newtheoremstyle{rmk}                                                           
{0.15cm}                                         
{0.15cm}                                         
{}      
{}                                      
{\bfseries}                             
{}                                      
{0.2cm}                                         
{}                                      
\theoremstyle{thm}
\newtheorem{theorem}[equation]{Theorem}
\newtheorem{corollary}[equation]{Corollary}
\newtheorem{lemma}[equation]{Lemma}
\newtheorem{proposition}[equation]{Proposition}
\newtheorem{definition}[equation]{Definition}
\theoremstyle{rmk}
\newtheorem{example}[equation]{Example}
\newtheorem{remark}[equation]{Remark}
\numberwithin{equation}{section}
\DeclareMathAlphabet{\mathbbe}{U}{bbold}{m}{n}
\newcommand{\im}{\operatorname{im}}
\newcommand{\RepFib}{\operatorname{RepFib}}
\newcommand{\univ}{{\mathrm{univ}}}
\newcommand{\pr}{\mathrm{pr}}
\newcommand{\rmU}{{\mathrm{U}}}
\newcommand{\rmh}{\mathrm{h}}
\newcommand{\opp}{\mathrm{op}}
\newcommand{\Lan}{\operatorname{Lan}}
\newcommand{\Ran}{\operatorname{Ran}}
\newcommand{\hoRan}{\operatorname{hoRan}}
\newcommand{\hoLan}{{\operatorname{hoLan}}}
\newcommand{\holim}{\operatorname{holim}}
\newcommand{\Ho}{{\operatorname{Ho}}}
\newcommand{\rmR}{{\mathrm{R}}}
\newcommand{\rmV}{{\mathrm{V}}}
\renewcommand{\lim}{\operatorname{lim}}
\newcommand{\scB}{\mathscr{B}}
\newcommand{\scI}{\mathscr{I}}
\newcommand{\scC}{\mathscr{C}}
\newcommand{\Kan}{{\mathscr{K}\hspace{-0.02cm}\mathrm{an}}}
\newcommand{\scD}{\mathscr{D}}
\newcommand{\scE}{\mathscr{E}}
\newcommand{\scA}{{\mathscr{A}}}
\newcommand{\scS}{\mathscr{S}}
\newcommand{\scM}{{\mathscr{M}}}
\newcommand{\sfF}{{\mathsf{F}}}
\newcommand{\sfW}{\mathsf{W}}
\newcommand{\NN}{\mathbb{N}}
\newcommand{\bbL}{\mathbb{L}}
\newcommand{\bbR}{\mathbb{R}}
\newcommand{\ul}[1]{\underline{#1}}
\newcommand{\Set}{{\mathscr{S}\mathrm{et}}}
\newcommand{\sSet}{{\mathscr{S}\mathrm{et}_{\hspace{-.03cm}\bbDelta}}}
\newcommand{\scFun}{{\mathscr{F}\hspace{-.03cm}\mathrm{un}}}
\newcommand{\Fun}{{\operatorname{Fun}}}
\newcommand{\Cat}{{\mathscr{C}\mathrm{at}}}
\newenvironment{myitemize}{\begin{itemize}[itemsep=-0.1cm, leftmargin=*, topsep=0cm]}{\end{itemize}}
\newenvironment{myenumerate}{\begin{enumerate}[topsep=-\parskip+0.1cm, itemsep=0.1cm, parsep=0cm, leftmargin=*, label=(\arabic*)]}{\end{enumerate}}
\newcommand{\qen}{\hfill$\triangleleft$}
\newcommand{\qandq}{\quad \text{and} \quad}
\newcommand{\qqandqq}{\qquad \text{and} \qquad}
\newcommand{\bbDelta}{{\mathbbe{\Delta}}}
\newcommand{\bbSigma}{{\mathbbe{\Sigma}}}
\newcommand{\bbLambda}{{\mathbbe{\Lambda}}}
\newcommand{\textint}{\textstyle{\int}}
\newlength{\Displayskip}
\begin{document}

\setlength{\abovedisplayskip}{\Displayskip}
\setlength{\belowdisplayskip}{\Displayskip}

\begin{center}
\LARGE{\textbf{An $\infty$-Categorical Localisation Functor\\for Diagrams of Simplicial Sets}}
\end{center}
\vspace{0.2cm}
\begin{center}
\large Severin Bunk
\end{center}

\vspace{0.2cm}

\begin{abstract}
\noindent
Associated to each small category $\scC$, there is a category of $\scC$-shaped diagrams of simplicial sets and an $\infty$-category of $N\scC$-shaped homotopy coherent diagrams of spaces.
We present a functor which exhibits the latter as the $\infty$-categorical localisation of the former at the objectwise weak homotopy equivalences.
This builds on a Quillen equivalence between the projective and covariant model structures associated to $\scC$ due to Heuts-Moerdijk~\cite{HM:Left_fibs_and_hocolims}, as well as Cisinski's theory of $\infty$-categorical localisations~\cite{Cisinski:HCats_HoAlg}.
We use the localisation functor to give simplified proofs that the left (resp.~right) homotopy Kan extension of diagrams of simplicial sets presents the $\infty$-categorical left (resp.~right) Kan extension of coherent diagrams of spaces.
\end{abstract}

\tableofcontents

\section{Introduction}
\label{sec:Introduction}

The use of homotopy coherent mathematics, and in particular of $\infty$-categories, is becoming increasingly common.
Where available, an often powerful approach is to combine $\infty$-categorical theory with stricter classical constructions, such as in the setting of simplicial or model categories.
Presenting $\infty$-categorical constructions by stricter models can have the upshot of combining the larger scope of coherent mathematics with the easier and more explicit computability of classical constructions.
To exploit this, it is necessary that one can pass easily between both frameworks.
The main goal of this paper is to provide a short and explicit transition from strict diagrams of simplicial sets, indexed by a small category $\scC$, to coherent diagrams of spaces, indexed by the nerve $N \scC$.

More concretely, let $\sSet$ denote the category of simplicial sets, and let $\Kan \subset \sSet$ denote the full subcategory on the Kan complexes.
We combine the Quillen equivalences, treated in~\cite{HM:Left_fibs_and_hocolims}, between the projective and covariant model structures associated to a small category $\scC$ with the model for the $\infty$-category $\scS$ of spaces and the $\infty$-categorical localisation theory from~\cite{Cisinski:HCats_HoAlg} to write down a functor
\begin{equation}
	\gamma_\scC \colon N \Fun(\scC, \Kan) \longrightarrow \scFun(N\scC, \scS)
\end{equation}
from the nerve of the category of $\scC$-shaped diagrams of Kan complexes to the $\infty$-category of $N\scC$-shaped diagrams of spaces.
We then prove:

\begin{theorem}
\label{st:intro loc thm}
The functor $\gamma_\scC$ exhibits $\scFun(N\scC, \scS)$ as the $\infty$-categorical localisation of $\Fun(\scC, \sSet)$ at the objectwise weak homotopy equivalences.
Furthermore, the functor $\gamma_\scC$ allows us to compute mapping spaces, internal homs and Kan extensions in $\scFun(N\scC, \scS)$ via the classical derived constructions in the projective model structure on $\Fun(\scC, \sSet)$.
\end{theorem}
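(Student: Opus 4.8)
The plan is to treat the two assertions of Theorem~\ref{st:intro loc thm} separately, the first --- that $\gamma_\scC$ exhibits $\scFun(N\scC,\scS)$ as the localisation --- carrying essentially all of the work. Recall that $\gamma_\scC$ is built as the composite of the Heuts--Moerdijk functor $\Fun(\scC,\sSet)\to\sSet_{/N\scC}$, restricted to objectwise-Kan diagrams --- which by \cite{HM:Left_fibs_and_hocolims} is a left Quillen equivalence from the projective to the covariant model structure and is moreover \emph{homotopical}, i.e.\ already inverts the objectwise weak homotopy equivalences --- with an explicit localisation functor for the covariant model structure on $\sSet_{/N\scC}$, in the sense of \cite{Cisinski:HCats_HoAlg}, whose target is identified with $\scFun(N\scC,\scS)$ via Cisinski's classification of left fibrations over $N\scC$. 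The inclusion $\Fun(\scC,\Kan)\hookrightarrow\Fun(\scC,\sSet)$, with objectwise Kan fibrant replacement as homotopy inverse, induces an equivalence on $\infty$-categorical localisations at the objectwise weak homotopy equivalences, so it suffices to establish the statement for $\Fun(\scC,\Kan)$, on which $\gamma_\scC$ is literally defined. The easy half is then immediate: $\gamma_\scC$ inverts the objectwise weak homotopy equivalences, because the Heuts--Moerdijk functor is homotopical and the second stage is a localisation functor, hence inverts covariant weak equivalences. Writing $\bar\gamma_\scC\colon\Fun(\scC,\Kan)[W^{-1}]\to\scFun(N\scC,\scS)$ for the induced functor ($W$ the objectwise weak homotopy equivalences), everything reduces to showing $\bar\gamma_\scC$ is an equivalence.

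For this I would show that the two stages composing $\gamma_\scC$ realise, respectively, the equivalence $\Fun(\scC,\Kan)[W^{-1}]\simeq\sSet_{/N\scC}[W_{\mathrm{cov}}^{-1}]$ induced by the Heuts--Moerdijk Quillen equivalence, and the equivalence $\sSet_{/N\scC}[W_{\mathrm{cov}}^{-1}]\simeq\scFun(N\scC,\scS)$ coming from Cisinski's classification of left fibrations. The first uses the principle --- internal to Cisinski's theory of derived functors --- that a homotopical left Quillen equivalence descends to an equivalence on localisations; the second is the statement that the covariant model structure presents $\scFun(N\scC,\scS)$. Concretely this amounts to checking that $\bar\gamma_\scC$ is essentially surjective --- which holds because the derived Heuts--Moerdijk functor already is, every left fibration over $N\scC$ being, up to covariant weak equivalence, the image of a projectively cofibrant strict diagram --- and fully faithful: for $X,Y\in\Fun(\scC,\Kan)$ the mapping space $\Map_{\Fun(\scC,\Kan)[W^{-1}]}(X,Y)$ is the derived mapping space of the projective model structure (Cisinski's identification of mapping spaces in a localised model category), a Quillen equivalence preserves derived mapping spaces, and derived mapping spaces in the covariant model structure compute the mapping spaces of $\scFun(N\scC,\scS)$. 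The step that requires real care, and what I expect to be the main obstacle, is naturality: one must check that these successive identifications --- of mapping spaces and of objects --- are the ones induced by $\bar\gamma_\scC$, i.e.\ that the explicitly written $\gamma_\scC$ agrees with the abstract zig-zag of equivalences above. I would make this hold tautologically by building the second stage of $\gamma_\scC$ out of the very localisation functors whose universal property renders the comparison automatic, rather than comparing two a priori distinct functors after the fact.

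The second assertion is then largely formal, given that the construction of $\gamma_\scC$ is natural in $\scC$. Mapping spaces are covered by the full-faithfulness step above: $\Map_{\scFun(N\scC,\scS)}(\gamma_\scC X,\gamma_\scC Y)\simeq\Map_{\Fun(\scC,\sSet)}(QX,RY)$ with $Q$ a projective cofibrant replacement and $R$ an objectwise Kan fibrant replacement. For Kan extensions along $u\colon\scC\to\scD$, restriction $u^*$ reindexes each stage of the construction, so $\gamma_\scC\circ u^*\simeq u^*\circ\gamma_\scD$; transporting the $\infty$-categorical adjunctions $u_!\dashv u^*\dashv u_*$ through $\bar\gamma_\scC$ and $\bar\gamma_\scD$ then identifies the $\infty$-categorical $u_!$ with the left adjoint of $u^*$ on the localisations, which by Cisinski's theory of derived functors is the left-derived $u_!$ in the projective model structure, since $(u_!,u^*)$ is a Quillen adjunction there; dually, the $\infty$-categorical $u_*$ is the right adjoint of $u^*$, computed via the pointwise formula as a homotopy limit over the comma categories --- to which the first assertion of the theorem applies --- and hence presented by the classical homotopy right Kan extension. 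Finally, as an equivalence $\bar\gamma_\scC$ preserves all limits, so $\gamma_\scC$ preserves finite (homotopy) products, which are computed objectwise on both sides; hence $\gamma_\scC$ intertwines the classical derived internal hom of $\Fun(\scC,\sSet)$ with the internal hom of the cartesian closed $\infty$-category $\scFun(N\scC,\scS)$, once more by uniqueness of adjoints.
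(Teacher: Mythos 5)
Your high-level plan --- reduce to Heuts--Moerdijk's Quillen equivalence between the projective and covariant model structures, combine with Cisinski's identification of left fibrations over $N\scC$ with $\scFun(N\scC,\scS)$, and check that $\gamma_\scC$ inverts objectwise weak equivalences --- does match the paper's strategy in Section~\ref{sec:gamma}. But there are two substantial mismatches. First, $\gamma_\scC$ in the paper is \emph{not} a literal composite of the rectification functor $r_\scC^*$ with a separate localisation functor for the covariant model structure; it is built in one stroke (Theorem~\ref{st:functor NFun(C, Kan) -> Fun(NC, S)}) from the family $r_{[n]\times\scC}^*$, and the statement that it classifies $r_\scC^* X$ is a theorem (Corollary~\ref{st:LFib classified by gamma_C(X)}), not a definition. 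Your proposal ``I would make this hold tautologically by building the second stage of $\gamma_\scC$ out of the very localisation functors whose universal property renders the comparison automatic'' therefore changes the theorem: the paper's entire point is to prove the localisation property for a \emph{specific, explicit, computable} functor, and replacing it by an abstractly-characterised one gives back the already-known existence statement that the paper explicitly says it is trying to improve on.

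Second, and relatedly, you correctly identify the crux --- that the explicitly defined $\gamma_\scC$ must be shown to agree with the abstract zig-zag of equivalences --- but you then decline to do it. That is exactly the content of the proof of Theorem~\ref{st:localisation theorem}: the paper shows $\gamma_\scC$ is a left exact functor between $\infty$-categories with weak equivalences and fibrations (Proposition~\ref{st:gamma_C is Lex}), invokes Cisinski's criterion (Theorem~\ref{st:equiv criterion for Lex functors}) to reduce the equivalence to the level of ordinary homotopy categories, and then verifies commutativity of the square~\eqref{eq:loc diag for gamma_C} comparing $\rmh\gamma_\scC$ (postcomposed with $\RepFib$) against $\bbR r_\scC^*$. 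Commutativity on objects is Corollary~\ref{st:LFib classified by gamma_C(X)}, but commutativity on morphisms requires a concrete construction: one must exhibit, for each $f\colon X_0\to X_1$, an explicit lift $\varphi$ in diagram~\eqref{eq:lift for r_C^* and RepFib} whose fibre over $\Delta^{\{1\}}$ recovers $r_\scC^*(f)$ up to left homotopy, and this is done with the $\bbSigma^n$-presentation of $r_\scC^*$ from Lemma~\ref{st:r_C^*F|n,a using bbSigma^n}. Your proposed essential-surjectivity/full-faithfulness route would also need an argument identifying mapping spaces in $L_\sfW N\Fun(\scC,\Kan)$, which the left-exactness criterion deliberately sidesteps. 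So the gap is real: you have reproduced the scaffolding but not the load-bearing step, and your suggested workaround would not prove the statement as written. (The computational-properties paragraph is roughly in the spirit of Section~\ref{sec:gamma_C in computations}, though the paper derives those claims by explicit chains of equivalences rather than purely by uniqueness of adjoints.)
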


The mere fact that $\scFun(N\scC, \scS)$ is the $\infty$-categorical localisation of $\Fun(\scC, \sSet)$ at the objectwise weak equivalences is known:
one way to see this is through a combination of Lurie's~\cite[Prop.~4.2.4.4]{Lurie:HTT}, Hinich's~\cite[Prop.~1.2.1]{Hinich:DK_localisation} and classical results by Dwyer and Kan~\cite{DK:Simplicial_localisations, DK:Calculating, DK:Function_complexes}.
However, this chain of arguments is rather indirect.
It also does not lend itself easily to being used in calculations because it passes through various models for $\infty$-categories (quasicategories, fibrant simplicial categories and marked simplicial sets).
More recently, Cisinski provided an alternative proof of the fact~\cite[Thm.~7.8.9, Cor.~7.9.9]{Cisinski:HCats_HoAlg}---entirely in the world of quasicategories---in the course of developing his theory of $\infty$-categorical localisations.
While this proof is conceptually clearer and includes the provision of a localisation functor~(combining \cite[Par.~7.3.14, 7.8.5, 7.8.7]{Cisinski:HCats_HoAlg}), the latter is still hard to compute explicitly (for instance, it still involves an $\infty$-categorical left Kan extension).

The contribution of this paper lies in providing an explicit and computable localisation functor as well as a much simplified proof of the fact that $\scFun(N\scC, \scS)$ is the $\infty$-categorical localisation of $\Fun(\scC, \sSet)$.
One way of approaching the localisation is through Lurie's relative nerve functor~\cite[Def.~3.2.5.2]{Lurie:HTT}, which is part of a Quillen equivalence between the projective model structure on $\Fun(\scC, \sSet)$ and the covariant model structure on $\sSet_{/N\scC}$; this was shown in~\cite[Prop.~3.2.5.18]{Lurie:HTT} and a simplified proof was given in~\cite{HM:Left_fibs_and_hocolims}.
One could then invoke abstract arguments or further constructions which show that either of the above model categories present the $\infty$-category $\scFun(N\scC, \scS)$ (this is proved, for instance, in the aforementioned~\cite[Prop.~4.2.4.4]{Lurie:HTT} and~\cite[Thm.~7.8.9, Cor.~7.9.9]{Cisinski:HCats_HoAlg}).

Here, we circumvent this second step:
instead, we enhance the relative nerve functor to write down the functor $\gamma_\scC$, which takes its values directly in the $\infty$-category $\scFun(N\scC, \scS)$, and prove that this already exhibits the $\infty$-categorical localisation (like~\cite{HM:Left_fibs_and_hocolims}, we avoid the homotopy coherent nerve~\cite{Lurie:HTT, Stevenson:Cov_MoStrs_spl_loc} in doing so).
At the same time, we obtain a framework which unifies three perspectives on $\scS$-valued functors:
they can be presented as strict $\sSet$-valued diagrams, or as left fibrations over $N\scC$, or they can be viewed directly as objects in $\scFun(N\scC,\scS)$.
Furthermore, we make manifest the full homotopical meaning of the rectification of simplicial diagrams $F \colon \scC \to \Kan$ from~\cite{HM:Left_fibs_and_hocolims}:
it is really the left fibration presented by the coherent diagram of spaces $\gamma_\scC F \colon N\scC \to \scS$ described by $F$.

In the case where $\scC = *$ is the final category, we obtain an $\infty$-categorical localisation functor $\gamma \colon N\Kan \to \scS$, which exhibits the localisation at the weak homotopy equivalences.
We show an explicit instance of the fact that localisation commutes with taking functor categories; that is, we show that there is a commutative diagram of $\infty$-categories and functors (Theorem~\ref{st:gamma_C = gamma o N o dashv})
\begin{equation}
\begin{tikzcd}[column sep=1cm, row sep=1cm]
	N\Fun(\scC, \Kan) \ar[d, "\gamma_\scC"'] \ar[r, "\cong"]
	& \scFun(N\scC, N\Kan) \ar[d, "\gamma_*"]
	\\
	\scFun(N\scC, \scS) \ar[r, equal]
	& \scFun(N\scC, \scS)
\end{tikzcd}
\end{equation}

Finally, we remark that shortly before the completion of the present paper, the preprint~\cite{Sharma:coCart_fibs_and_hocolims} appeared on the ArXiv.
There, the Quillen equivalences from~\cite{HM:Left_fibs_and_hocolims} are lifted from the projective and covariant model structures based on the Kan-Quillen model structure to those based on the Joyal model structure on $\sSet$.
Employing Sharma's results it should be possible to also prove versions of the results in the present paper for diagrams of $\infty$-categories in place of diagrams of $\infty$-groupoids.

\paragraph*{Outline.}
In Section~\ref{sec:r_C! -| r_C^*} we recall the rectification and homotopy colimit functors, as well as various Quillen equivalences from~\cite{HM:Left_fibs_and_hocolims}.
We provide alternative presentations of the rectification functor and analyse its properties.
In Section~\ref{sec:from strict to coherent diagrams} we enhance the rectification functor to a functor $\gamma_\scC$ from strict diagrams of Kan complexes to coherent diagrams of spaces and investigate its interplay with left fibrations.
In Section~\ref{sec:gamma} we prove our main theorem, namely that $\gamma_\scC$ implements the $\infty$-categorical localisation of the category of strict Kan-complex-valued diagrams at the objectwise weak equivalences (Theorem~\ref{st:localisation theorem}).
We use this result and the techniques developed to prove several nice computational properties of $\gamma_\scC$ in Section~\ref{sec:gamma_C in computations}: it allows us to compute mapping spaces, exponentials and Kan extensions at the level of strict diagrams.
Finally, we recall some of Cisinski's $\infty$-categorical localisation theory in Appendix~\ref{sec:infty-localisation}.

\paragraph*{Notation}

We fix a nested pair $\rmU \in \rmV$ of Grothendieck universes such that the standard simplex category $\bbDelta$ is $\rmU$-small.
We will not distinguish notationally between a morphism $u \colon [k] \to [n]$ in $\bbDelta$ and the corresponding morphism $u \colon \Delta^k \to \Delta^n$ of simplicial sets under the Yoneda embedding of $\bbDelta$.

\vspace{-0.4cm}
\noindent
\begin{longtable}{p{3cm}@{\hspace{-.5cm}}p{14cm}}
$\rmh \scD$ & homotopy category of an $\infty$-category $\scD$
\\
$\Ho (\scM)$ & homotopy category of a model category $\scM$
\\
$\ul{\scM}(-,-)$ & simplicially enriched hom space in a simplicial category $\scM$
\\
$\scS$ & $\infty$-category of $\rmU$-small spaces/$\infty$-groupoids
\\
\smash{$\widehat{\scS}$} & $\infty$-category of $\rmV$-small spaces/$\infty$-groupoids
\\
$\sSet$ & Kan-Quillen model category of $\rmU$-small simplicial sets
\\
$\rmV\sSet$ & Kan-Quillen model category of $\rmV$-small simplicial sets
\\
$\Kan$ & full subcategory of $\sSet$ on the $\rmU$-small Kan complexes
\\
$\Fun(\scC, \scD)$ & 1-category of (1-categorical) functors $\scC \to \scD$, for 1-categories $\scC$ and $\scD$
\\
$\scFun(\scA, \scB)$ & $\infty$-category of functors from $\scA \in \sSet$ to an $\infty$-category $\scB$
\\
$\scC[W^{-1}]$ & 1-categorical/Gabriel-Zisman localisation of a 1-category $\scC$ at a set $W$ of morphisms
\\
$L^B_S \scM$ & left Bousfield localisation of a model category $\scM$ at a set $S$ of morphisms
\\
$L_W \scA$ & $\infty$-categorical localisation of an $\infty$-category $\scA$ at a set of morphisms $W$
\end{longtable}
\vspace{-0.8cm}

\paragraph*{Acknowledgements}

The author would like to thank Walker Stern for valuable feedback on a preliminary draft of this paper.
This research was funded by the Deutsche Forschungsgemeinschaft (DFG, German Research Foundation) under project number 468806966.

\section{The rectification functor}
\label{sec:r_C! -| r_C^*}

In this section we start by reviewing results from~\cite{HM:Left_fibs_and_hocolims}.
We develop different presentations of the \textit{rectification functor} $r_\scC^*$ from~\cite{HM:Left_fibs_and_hocolims} and analyse its various properties.

\subsection{Various model structures on simplicial sets and diagrams}

We briefly recall the main model structures we will use on categories of diagrams of simplicial sets.

\begin{theorem}
\label{st:covariant MoStr}
Let $A$ be a simplicial set.
There exists a unique model structure on the slice category $\sSet_{/A}$ whose cofibrations are the monomorphisms and whose fibrant objects are the left fibrations with codomain $A$~\cite[Ch.~8]{Joyal:QuasiCats_and_applications}.
In this model structure, a morphism between fibrant objects is a fibration precisely if it is a left fibration, and it is a weak equivalence if and only if it is a fibrewise weak equivalence~\cite[Thm.~4.4.14]{Cisinski:HCats_HoAlg}.
Furthermore, this model structure is simplicial~\cite[Prop.~2.1.4.8]{Lurie:HTT}.
\end{theorem}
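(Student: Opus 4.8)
This statement is a recollection, so the plan is to assemble it from the three cited results and to indicate how they dovetail; I treat the three assertions in turn. For the first: the slice $\sSet_{/A}$ is again a category of set-valued presheaves, namely on the category of simplices of $A$, so Cisinski's theory of model structures on presheaf categories applies verbatim. I would declare the cofibrations to be the monomorphisms---these are generated by the boundary inclusions $\partial\Delta^n \hookrightarrow \Delta^n$ ranging over all simplices $\Delta^n \to A$---and produce the model structure as the one whose trivial cofibrations form the saturation of the left horn inclusions $\Lambda^n_k \hookrightarrow \Delta^n$, $0 \le k < n$, sliced over $A$. By Joyal's theorem characterising left fibrations as the maps with the right lifting property against left anodyne maps, the fibrant objects of the resulting model structure are exactly the left fibrations with codomain $A$. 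For uniqueness I would invoke the general principle that a model structure on a presheaf topos, or a slice thereof, whose cofibrations are the monomorphisms is completely determined by its class of fibrant objects: the trivial fibrations are pinned down as the maps with the right lifting property against monomorphisms, every object is cofibrant, and a general map is then a weak equivalence if and only if it becomes a homotopy equivalence after fibrant replacement. All of this is \cite[Ch.~8]{Joyal:QuasiCats_and_applications}.

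\textbf{Fibrations and weak equivalences between fibrant objects.} That a map between fibrant objects is a fibration if and only if it is a left fibration, and a weak equivalence if and only if it is a fibrewise weak equivalence, is \cite[Thm.~4.4.14]{Cisinski:HCats_HoAlg}. For orientation I would recall the underlying ideas: the ``only if'' direction for fibrations is immediate, since the left horn inclusions are trivial cofibrations; the ``if'' direction is a retract argument exploiting that left fibrations over a fixed fibrant base are stable under the relevant lifting constructions. For weak equivalences, pulling a left fibration back along a vertex of $A$ is a right Quillen functor to the Kan--Quillen model structure on $\sSet$, so the fibrewise weak equivalences are precisely the maps detected on these fibres; that every fibrewise equivalence of left fibrations is already a covariant equivalence is the substantive half, and is where the homotopy theory of the covariant model structure (equivalently, a straightening-type argument) enters.

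\textbf{Simpliciality, and the main obstacle.} For simpliciality I would equip $\sSet_{/A}$ with the tensoring $(p \colon X \to A) \otimes K := (X \times K \to A)$ given by projecting off $K$ and composing with $p$, together with the evident cotensoring and simplicial mapping spaces, and verify the pushout--product axiom. The pushout--product of two monomorphisms is a monomorphism, which is clear; the only real point is that the pushout--product of a monomorphism with a trivial cofibration is again a trivial cofibration, which is \cite[Prop.~2.1.4.8]{Lurie:HTT}. On generators, and after forgetting from $\sSet_{/A}$ down to $\sSet$, this reduces to the statement that the pushout--product of a boundary inclusion with a left horn inclusion is left anodyne, a standard fact about left anodyne maps. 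I expect the main obstacle to lie not in any of these formal manipulations but in the identification of the weak equivalences between fibrant objects with the fibrewise weak equivalences: that is the one place where genuine homotopy-theoretic content is needed, and I would lean entirely on \cite[Thm.~4.4.14]{Cisinski:HCats_HoAlg} for it.
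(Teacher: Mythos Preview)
The paper does not supply its own proof of this theorem: it is stated purely as a recollection, with the three assertions attributed in-line to Joyal, Cisinski, and Lurie respectively, and no further argument is given. Your proposal is therefore not competing with any argument in the paper but rather fleshing out what those citations contain; as such it is appropriate and the sketch you give is correct in its essentials. One small comment: your uniqueness argument is right, but the cleanest formulation is Joyal's observation that a model structure is determined by its cofibrations together with its class of fibrant objects (since cofibrations pin down trivial fibrations, every object is cofibrant here, and then weak equivalences are detected by mapping into fibrant objects up to cylinder homotopy); you essentially say this, just slightly informally.
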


\begin{definition}
The model structure in Theorem~\ref{st:covariant MoStr} is called the \emph{covariant model structure on $\sSet_{/A}$.}
\end{definition}

The following results are standard; see, for instance, the textbook~\cite{Hirschhorn:MoCats}.

\begin{theorem}
\label{st:proj/inj MoStr on sSet^C}
Let $\scC$ be a small category.
\begin{myenumerate}
\item There exists a simplicial model structure on $\Fun(\scC, \sSet)$ whose weak equivalences (resp.~fibrations) are the objectwise weak equivalences (resp.~fibrations).

\item There exists a simplicial model structure on $\Fun(\scC, \sSet)$ whose weak equivalences (resp.~cofibrations) are the objectwise weak equivalences (resp.~cofibrations).
\end{myenumerate}
\end{theorem}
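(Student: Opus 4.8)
The plan is to deduce both parts from the standard existence theorems for projective and injective model structures on diagram categories $\Fun(\scC, \scM)$, applied to the simplicial model category $\scM = \sSet$ equipped with the Kan--Quillen model structure. The key inputs are: (i) $\sSet$ is a cofibrantly generated simplicial model category (indeed combinatorial); (ii) for a cofibrantly generated model category $\scM$ and a small category $\scC$, the projective model structure on $\Fun(\scC, \scM)$---with objectwise weak equivalences and objectwise fibrations---exists and is cofibrantly generated, with generating (trivial) cofibrations obtained by applying the left adjoints $(c_\scC)_! = \scC(c, -) \otimes (-)$ of evaluation at each object $c \in \scC$ to the generating (trivial) cofibrations of $\scM$; (iii) if moreover $\scM$ is combinatorial, the injective model structure on $\Fun(\scC, \scM)$---with objectwise weak equivalences and objectwise cofibrations---exists and is combinatorial. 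All three facts are in Hirschhorn~\cite{Hirschhorn:MoCats} (or Lurie~\cite{Lurie:HTT}, Appendix A).

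First I would record that $\sSet$ with the Kan--Quillen model structure is cofibrantly generated, with generating cofibrations $\{\partial\Delta^n \hookrightarrow \Delta^n\}_{n \geq 0}$ and generating trivial cofibrations $\{\Lambda^n_k \hookrightarrow \Delta^n\}_{0 \leq k \leq n}$, and that it is a simplicial model category in the usual way. For part~(1), I would then invoke the projective model structure theorem: since $\sSet$ is cofibrantly generated and $\scC$ is small, $\Fun(\scC, \sSet)$ carries a model structure whose weak equivalences and fibrations are detected objectwise. For part~(2), I would invoke the injective model structure theorem, which requires the extra hypothesis that $\sSet$ be combinatorial (equivalently, locally presentable plus cofibrantly generated)---this holds because $\sSet$ is locally presentable and we have just noted it is cofibrantly generated---to obtain a model structure on $\Fun(\scC, \sSet)$ whose weak equivalences and cofibrations are detected objectwise.

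The remaining point is the simplicial enrichment in both cases. The category $\Fun(\scC, \sSet)$ is tensored and cotensored over $\sSet$ by forming tensors, cotensors and mapping spaces objectwise (for $K \in \sSet$ and $F \in \Fun(\scC, \sSet)$, set $(F \otimes K)(c) = F(c) \times K$, etc.), so it is a simplicial category. To check the pushout-product (SM7) axiom for the projective structure, one uses that a projective cofibration is in particular an objectwise cofibration together with the fact that the pushout-product of a projective cofibration with a cofibration of simplicial sets is again a projective cofibration---this follows by the usual adjunction argument from SM7 in $\sSet$, testing against objectwise fibrations. For the injective structure the same verification is even more direct, since cofibrations and fibrations are both objectwise and SM7 can be checked objectwise, where it is precisely SM7 in $\sSet$.

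I do not anticipate a genuine obstacle here: the statement is explicitly flagged in the excerpt as standard, and the entire content is the careful citation of the projective/injective model structure theorems together with the routine---if slightly fiddly---verification of SM7 from the corresponding axiom in $\sSet$. The one place to be mildly careful is making sure the combinatoriality (equivalently local presentability) of $\sSet$ is invoked for part~(2), since the injective model structure on a diagram category need not exist for an arbitrary cofibrantly generated $\scM$; for $\sSet$ this hypothesis is of course satisfied.
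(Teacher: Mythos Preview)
Your proposal is correct and aligns with the paper's treatment: the paper does not give a proof at all but simply declares the result standard and cites Hirschhorn's textbook. Your outline is therefore more detailed than what the paper provides, but it is the expected expansion of that citation.

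One small slip worth flagging: in your SM7 verification for the injective structure you write that ``cofibrations and fibrations are both objectwise''. Injective fibrations are \emph{not} in general objectwise fibrations. The argument still goes through, but for a different reason: since injective cofibrations are exactly the objectwise monomorphisms and the tensor with a simplicial set is computed objectwise, the pushout-product of an injective cofibration with a monomorphism in $\sSet$ is again an objectwise monomorphism (by SM7 in $\sSet$ applied at each object), hence an injective cofibration; the acyclicity clause follows the same way since weak equivalences are also objectwise. So the verification is indeed immediate, just not via any appeal to the shape of injective fibrations.
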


\begin{definition}
The model structures in Theorem~\ref{st:proj/inj MoStr on sSet^C} are called the \emph{projective} and \emph{injective} model structure on $\Fun(\scC, \sSet)$, respectively.
\end{definition}

\subsection{The rectification functor and its properties}
\label{sec:r^*}

One of the main tools in this paper is what is called the \textit{rectification functor} in~\cite{HM:Left_fibs_and_hocolims}, or the \textit{relative nerve} in~\cite{Lurie:HTT}.
Given a $\rmU$-small (1-)category $\scC$, it is a functor $r_\scC^* \colon \Fun(\scC, \sSet) \to \sSet_{/N\scC}$, which translates diagrams $\scC \to \sSet$ of simplicial sets on a small category $\scC$ into simplicial sets over the nerve $N\scC$.
In this section, we recall the rectification functor following~\cite{HM:Left_fibs_and_hocolims} and develop alternative presentations of this functor.
These will be useful in proofs in later sections.
We investigate several properties of this functor, such as its behaviour under changes of the indexing category $\scC$.
This is crucial for combining the rectification functor with Cisinski's model for the $\infty$-category $\scS$ of spaces~\cite[Sec.~5.2]{Cisinski:HCats_HoAlg} later on.

Let $\scC$ be a $\rmU$-small (1-)category.
Let $\alpha \colon [n] \to \scC$ be an $n$-simplex in $N\scC$, and let $\tau_{[n]} \colon \bbDelta_{/[n]} \to [n]$ be the last-vertex functor.
Recall that this sends an object $\varphi \colon [k] \to [n]$ to the value $\varphi(k) \in [n]$ and a morphism
\begin{equation}
\begin{tikzcd}[column sep=0.4cm]
	{[k]} \ar[rr, "v"] \ar[dr, "u"']
	& & {[k']} \ar[dl, "u'"]
	\\
	& {[n]} &
\end{tikzcd}
\end{equation}
to the unique morphism $u(k) \to u'(k')$ in $[n]$.
We let $\pr \colon \bbDelta_{/[n]} \to \bbDelta$ denote the canonical projection functor and $\Delta^\cdot \colon \bbDelta \to \sSet$, $[k] \mapsto \Delta^k$ the Yoneda embedding of $\bbDelta$.

Given $\alpha \colon [n] \to \scC$ and a functor $F \colon \scC \to \sSet$, define the set of natural transformations
\begin{equation}
	(r_\scC^*F)_{n, \alpha} = \Fun(\bbDelta_{/[n]}, \sSet) (\Delta^\cdot \circ \pr,\, F \circ \alpha \circ \tau_{[n]})\,.
\end{equation}
This assembles into a functor
\begin{equation}
\label{eq:r_C^* explicit}
	r_\scC^* \colon \Fun(\scC,\sSet) \longrightarrow \sSet_{/N\scC}\,,
	\qquad
	(r_\scC^*F)_n = \coprod_{\alpha \in N_n \scC} (r_\scC^*F)_{n, \alpha}\,.
\end{equation}

Explicitly, an $n$-simplex of $r_\scC^*F$ consists of pairs $(\alpha, x)$ of an $n$-simplex $\alpha \in N \scC$ together with a family $x = (x_u)_u$ as follows:
the family is indexed by all maps $u \colon [k] \to [n]$, and $x_u$ is an element $x_u \in F(\alpha_{u(k)})_k$, or equivalently a map $\Delta^k \to F(\alpha_{u(k)})$.
These data satisfy the property that, for each factorisation
\begin{equation}
\begin{tikzcd}[column sep=0.4cm]
	{[k]} \ar[rr, "v"] \ar[dr, "u"'] & & {[k']} \ar[ld, "u'"]
	\\
	& {[n]} &
\end{tikzcd}
\end{equation}
we have a commutative diagram
\begin{equation}
\label{eq:compatibility in r_C^*F}
\begin{tikzcd}
	\Delta^k \ar[r, "x_u"] \ar[d, "v"']
	& F(\alpha_{u(k)}) \ar[d, "F(f_v)"]
	\\
	\Delta^{k'} \ar[r, "x_{u'}"']
	& F(\alpha_{u'(k')})
\end{tikzcd}
\end{equation}
where $f_v \colon \alpha_{u(k)} \to \alpha_{u'(k')}$ is the image under $\alpha \colon [n] \to \scC$ of the unique morphism $u(k) \to u'(k')$ in $[n]$.
The simplicial structure of $r_\scC^*F$ is induced by composition by maps $w \colon [n'] \to [n]$.
Explicitly, $w$ sends an $n$-simplex $(\alpha, x) \in (r_\scC^*F)_n$ to the $n'$-simplex $(w^*\alpha, w^*x)$, with $w^* \alpha = \alpha \circ w \colon [n'] \to \scC$ and
\begin{equation}
	(w^*x)_u = x_{w \circ u} \in F \big( (w^*\alpha)_{u(k)} \big)_k
	= F (\alpha_{w \circ u(k)})_k
\end{equation}
for $u \colon [k] \to [n']$.
The map $r_\scC^*F \to N\scC$ is given by sending a pair $(\alpha, x)$ to the $n$-simplex $\alpha \in N_n \scC$.
Given a morphism $f \colon F \to G$ in $\Fun(\scC, \sSet)$, the image of $f$ under $r_\scC^*$ acts by sending a pair $(\alpha, x)$ to the pair $(\alpha, f(x))$, where $f$ is applied to each element $x_u$ in the family $x = (x_u)_u$.

\begin{definition}
\label{def:rectification functor}
Let $\scC$ be a $\rmU$-small category.
The functor $r_\scC^* \colon \Fun(\scC, \sSet) \longrightarrow \sSet_{/N\scC}$ is called the \emph{rectification functor for $\scC$}.
\end{definition}

\begin{remark}
This agrees with the functor $r^*$ from~\cite[p.~9]{HM:Left_fibs_and_hocolims} (we have chosen to add the subscript $\scC$ to the notation because will need to vary the indexing category).
As Heuts and Moerdijk point out already, this also coincides with Lurie's \textit{relative nerve functor}~\cite[Def.~3.2.5.2]{Lurie:HTT}.
\qen
\end{remark}

In the following we present further equivalent ways of describing the rectification functor $r_\scC^*$.
These require less data and are therefore more convenient to work with in certain circumstances.

\begin{definition}
For $n \in \NN_0$, let $\bbSigma^n$ denote the category whose objects are pairs $[i,j]$, $i \leq j \in [n]$ (we follow~\cite{Haugseng:Iterated_spans} for the notation).
We identify a pair $[i,j]$ with the interval
\begin{equation}
	[i,j] = \{l \in [n]\, | \, i \leq l \leq j \} \subset [n]\,.
\end{equation}
There is a unique morphism $[i,j] \to [i', j']$ in $\bbSigma^n$ whenever there is an inclusion of intervals $[i,j] \subset [i',j']$, or, equivalently, whenever $i' \leq i \leq j \leq j'$.
\end{definition}

\begin{remark}
The categories $\bbSigma^n$, with $n$ ranging over the non-negative integers, are the foundation for Segal objects of spans in $\infty$-categories.
We refer to~\cite{Haugseng:Iterated_spans} for more on this.
Below, we rephrase the simplicial set $r_\scC^* F$ as encoding certain spans of sections of the functor $F$.
\qen
\end{remark}

Each $n$-simplex $\alpha \in N_n \scC$ determines a functor
\begin{equation}
\label{eq:Sigma alpha}
	\Sigma \alpha \colon \bbSigma^n \to \bbDelta^\opp \times \scC\
\end{equation}
as follows:
$\Sigma \alpha$ sends an object $[i,j] \in \bbSigma^n$ to the pair $([i], \alpha_j)$.
It sends an inclusion $[i,j] \subset [i',j']$ to the pair $(v, \alpha_{j,j'})$, where $v \colon [i'] \to [i]$ is defined by $v(l) = l$, for $l = 0, \ldots, i'$, and the morphism $\alpha_{j,j'}$ in $\scC$ is the image under $\alpha$ of the unique morphism $j \to j'$ in $[n]$.
The tensor-hom adjunction in the category of $\rmU$-small categories provides a canonical identification
\begin{equation}
	(-)^\dashv \colon \Fun(\scC, \sSet) \longrightarrow \Fun(\bbDelta^\opp \times \scC, \Set)\,.
\end{equation}
Given a functor $F \colon \scC \to \sSet$ we have $F^\dashv([i],c) = F(c)_i$, for each $i \in \bbDelta$ and $c \in \scC$.

\begin{lemma}
\label{st:r_C^*F|n,a using bbSigma^n}
Let $F \colon \scC \to \sSet$ be a functor.
The simplices $(\alpha, x)$ of $r_\scC^*F$ are in canonical bijection with pairs $(\alpha, y)$, where $y$ is a section of $(\Sigma \alpha)^* F^\dashv \colon \bbSigma^n \to \Set$, or, equivalently, with elements of $\lim((\Sigma \alpha)^* F^\dashv \colon \bbSigma^n \to \Set)$.
\end{lemma}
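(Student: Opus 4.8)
The plan is to reduce each of the two descriptions of an $n$-simplex lying over a fixed $\alpha \in N_n\scC$ to the same compact package of data, which I will call \emph{spine data for $\alpha$}: an $(n{+}1)$-tuple $(\xi_0,\dots,\xi_n)$ with $\xi_j \in F(\alpha_j)_j$ subject to the identities $d_j\xi_j = F(\alpha_{j-1,j})(\xi_{j-1})$ for $1 \le j \le n$, where $\alpha_{j-1,j}$ denotes the image under $\alpha$ of the edge $j{-}1 \to j$ and $d_j$ is the face omitting the top vertex $j$.

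First I unwind the left-hand side. By construction $(r_\scC^*F)_{n,\alpha}$ is the set of compatible families $x = (x_u)_u$, i.e.\ of natural transformations $\Delta^\cdot \circ \pr \Rightarrow F \circ \alpha \circ \tau_{[n]}$. I claim $x$ is determined by the subfamily $\xi_j := x_{\mathrm{incl}_j}$ ($0 \le j \le n$), where $\mathrm{incl}_j\colon[j] \to [n]$ picks out the first $j{+}1$ vertices, and that this subfamily is exactly a spine tuple. For the first point: every $u\colon[k]\to[n]$ factors as $u = \mathrm{incl}_{u(k)} \circ \bar u$ with $\bar u\colon[k] \to [u(k)]$ the corestriction $l \mapsto u(l)$, and (\ref{eq:compatibility in r_C^*F}) applied to this factorisation forces $x_u = \bar u^*(\xi_{u(k)})$. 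Conversely, starting from any tuple $(\xi_j)$ satisfying the displayed identities and \emph{defining} $x_u := \bar u^*(\xi_{u(k)})$, one checks (\ref{eq:compatibility in r_C^*F}) for an arbitrary factorisation $u = u'\circ v$: rewriting both sides through $\bar u$ and the front inclusion $[u(k)] \hookrightarrow [u'(k')]$, the square reduces to the statement that the front $u(k)$-face of $\xi_{u'(k')}$ equals $F(\alpha_{u(k),u'(k')})(\xi_{u(k)})$, which follows by iterating the spine identities together with functoriality of $\alpha$ and of $F$. Thus $x \mapsto (\xi_j)_j$ is a bijection onto the set of spine tuples.

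Next I unwind the right-hand side. The category $\bbSigma^n$ is generated by the left-extensions $[i,j] \to [i{-}1,j]$ and the right-extensions $[i,j] \to [i,j{+}1]$. The functor $(\Sigma\alpha)^*F^\dashv$ carries a left-extension to $d_i\colon F(\alpha_j)_i \to F(\alpha_j)_{i-1}$ (because $\Sigma\alpha$ sends the corresponding morphism of $\bbSigma^n$ to the front inclusion $[i{-}1]\hookrightarrow[i]$ and $\alpha_{j,j}=\id$) and a right-extension to $F(\alpha_{j,j+1})\colon F(\alpha_j)_i \to F(\alpha_{j+1})_i$. Hence a section $y$ is determined by its diagonal values $\xi_j := y_{[j,j]}$—namely $y_{[i,j]}$ is the front $i$-face of $\xi_j$—and the only surviving constraint, forced by the object $[j{-}1,j]$ receiving maps from both $[j,j]$ and $[j{-}1,j{-}1]$, is precisely $d_j\xi_j = F(\alpha_{j-1,j})(\xi_{j-1})$, all other compatibilities following by passing to further front faces. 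So sections of $(\Sigma\alpha)^*F^\dashv$ are likewise in bijection with spine tuples; the ``$\lim$'' reformulation is then just the tautology that a section of a $\Set$-valued functor on a small category is the same datum as an element of its limit.

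Composing the two bijections gives the asserted canonical bijection $(\alpha,x) \leftrightarrow (\alpha,y)$; unwound, $y_{[i,j]}$ is the front $i$-face of $x_{\mathrm{incl}_j}$, equivalently $F(\alpha_{i,j})(x_{\mathrm{incl}_i})$. Every step is assembled from face maps and from $F$ (resp.\ $\alpha$) evaluated on morphisms, so all of it commutes with postcomposition along maps of simplicial sets and along morphisms of diagrams; in particular the bijection is natural in $F$. I expect the one genuinely fiddly point to be the converse half of the first reduction—verifying that the single formula $x_u = \bar u^*(\xi_{u(k)})$ satisfies (\ref{eq:compatibility in r_C^*F}) for \emph{every} factorisation—which is a bookkeeping computation with the simplicial identities; the $\bbSigma^n$ side is easier because that category has so few morphisms. (Alternatively one can package the first two steps through the functor $P\colon\bbDelta_{/[n]}\to\bbSigma^n$, $u \mapsto [u(0),u(k)]$, observing $F\circ\alpha\circ\tau_{[n]} = P^*H$ for $H([i,j]) = F(\alpha_j)$ and invoking the adjunction $\Lan_P \dashv P^*$, but the direct argument above is more self-contained.)
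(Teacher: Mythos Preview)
Your argument is correct, but it takes a different route from the paper's proof. The paper argues directly: it factors an arbitrary $u \colon [k] \to [n]$ through the minimal interval inclusion $\overline{\im(u)} = [u(0),u(k)] \hookrightarrow [n]$ and uses the compatibility condition~\eqref{eq:compatibility in r_C^*F} to show that $x_u$ is already determined by the value $x_\iota$ on that interval inclusion. Since the interval inclusions are precisely the objects of $\bbSigma^n$, this identifies the family $x$ with a section $y$ over $\bbSigma^n$ in one step, without passing through an auxiliary description. (Your closing parenthetical---the functor $P \colon \bbDelta_{/[n]} \to \bbSigma^n$, $u \mapsto [u(0),u(k)]$---is exactly the paper's idea.)

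Your route instead reduces \emph{both} sides to the smaller ``spine'' data $(\xi_j)_j$. This is a perfectly good strategy, and in fact it proves a bit more: your spine data are precisely the sections over the subcategory $\bbLambda^n \subset \bbSigma^n$ on objects $[i,j]$ with $j-i \in \{0,1\}$, so you are simultaneously establishing the content of the paper's subsequent Lemma~\ref{st:r_c^* via bbLambda} (which the paper derives separately from a cofinality argument). The trade-off is that the paper's direct identification makes the bijection $(\alpha,x) \leftrightarrow (\alpha,y)$ more transparent (it is literally restriction to intervals), whereas your bijection is a composite through the spine. You correctly flag that the fiddly point in your approach is verifying~\eqref{eq:compatibility in r_C^*F} for \emph{all} factorisations from the spine identities alone; the paper sidesteps this by working at the level of intervals, where the required bookkeeping is absorbed into the existence of the minimal-interval factorisation.
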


\begin{proof}
The map $(\alpha,x) \mapsto (\alpha,y)$ simply forgets part of the data; it remains to describe an inverse map to this process.
Let $u \colon [k] \to [n]$ be an object in $\bbDelta_{/[n]}$.
The morphism $u$ has a unique factorisation
\begin{equation}
\begin{tikzcd}[column sep=1.25cm, row sep=1cm]
	{[k]} \ar[r, "v"] \ar[dr, "u"' description]
	& {[k']} \ar[r, "v'"] \ar[d, "u'" description]
	& {[k'']} \ar[dl, "u''" description]
	\\
	& {[n]} &
\end{tikzcd}
\end{equation}
The pair $(u',v)$ forms the image factorisation of $u$; that is, we may identify $u'$ with the inclusion $\im(u) \subset [n]$ (where we view $\im(u) \cong [k']$ as a totally ordered set with the order induced by this inclusion).
The map $v'$ can then be identified with the inclusion $\im(u) \hookrightarrow \overline{\im(u)} = [u(0), u(k)] \subset [n]$,
where for $i \leq j \in [n]$ we let $[i,j] \subset [n]$ denote the `interval from $i$ to $j$ in $[n]$', i.e.~the totally ordered subset of $[n]$ consisting of all $l \in [n]$ with $i \leq l \leq j$.
Note that $\overline{\im(u)} \hookrightarrow [n]$ is the smallest interval in $[n]$ through which $u$ factors.
In this way, each order-preserving map $u \colon [k] \to [n]$ factors uniquely through a minimal interval inclusion $\overline{\im(u)} \hookrightarrow [n]$.

Given a second object $v \colon [l] \to [n]$ in $\bbDelta_{/[n]}$ and a morphism $w \colon u \to v$ in $\bbDelta_{/[n]}$, we obtain an induced commutative diagram (using that $v \circ w = u$)
\begin{equation}
\begin{tikzcd}
	{[k]} \ar[r] \ar[d, "w"']
	& \im(u) \ar[r, hookrightarrow] \ar[d, hookrightarrow]
	& \overline{\im(u)} \ar[d, hookrightarrow]
	\\
	{[l]} \ar[r]
	& \im(v) \ar[r, hookrightarrow]
	& \overline{\im(v)}
\end{tikzcd}
\end{equation}
in $\bbDelta_{/[n]}$.
The right-hand vertical morphism is an inclusion of intervals in $[n]$.
By these arguments and the compatibility condition~\eqref{eq:compatibility in r_C^*F} on the family $x$, we see that the data $(\alpha, x)$ are entirely determined by the morphisms given by inclusions of intervals in $[n]$, i.e.~the morphisms in $\bbSigma^n$.
Explicitly, for any morphism $u \colon [k] \to [n]$, the compatibility condition~\eqref{eq:compatibility in r_C^*F} applied to the factorisation \smash{$[k] \overset{v}{\to} \overline{\im(u)} \overset{\iota}{\to} [n]$} induces a commutative diagram
\begin{equation}
\begin{tikzcd}
	\Delta^k \ar[r, "x_u"] \ar[d]
	& F(\alpha_{u(k)}) \ar[d, equal]
	\\
	\Delta^{\overline{\im(u)}} \ar[r, "x_\iota"']
	& F(\alpha_{u(k)})
\end{tikzcd}
\end{equation}
implying that $x_u$ is entirely determined by $x_\iota$, i.e.~by the value of the family $x$ on the interval inclusion $\overline{\im(u)} \hookrightarrow [n]$.
\end{proof}

\begin{definition}
Let $\bbLambda^n \subset \bbSigma^n$ denote the full subcategory on those objects $[i,j]$ where $j-i \in \{0,1\}$.
\end{definition}

We record the following observation:

\begin{lemma}
\label{st:bbLambda to bbSigma is cofinal}
For each $n \in \NN_0$, the inclusion $\iota^n \colon \bbLambda^n \hookrightarrow \bbSigma^n$ is homotopy cofinal; that is, its nerve $N \iota^n \colon N \bbLambda^n \hookrightarrow N \bbSigma^n$ is a cofinal morphism in $\sSet$.
\end{lemma}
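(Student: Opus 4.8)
The plan is to verify cofinality of $N\iota^n$ by the classical fibrewise criterion — Quillen's Theorem~A in its homotopy-invariant form. Since both $N\bbLambda^n$ and $N\bbSigma^n$ are nerves of (posetal) $1$-categories, $N\iota^n$ is cofinal if and only if, for every object $[i,j]\in\bbSigma^n$, the nerve of the comma category $\iota^n\downarrow[i,j]$ is weakly contractible; here $\iota^n\downarrow[i,j]$ consists of the objects $c$ of $\bbLambda^n$ equipped with a morphism $c\to[i,j]$ in $\bbSigma^n$, and because $\bbSigma^n$ is a poset this is simply the full subposet of $\bbLambda^n$ spanned by the intervals $c$ with $c\subseteq[i,j]$.

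First I would make these comma categories explicit. For $[i,j]$ with $0\le i\le j\le n$, the intervals $c\subseteq[i,j]$ lying in $\bbLambda^n$ are exactly the singletons $[k,k]$ and the edges $[k,k+1]$ contained in $[i,j]$, and the order-preserving relabelling $l\mapsto l-i$ identifies $\iota^n\downarrow[i,j]$ with $\bbLambda^{\,j-i}$. Thus the statement reduces to: $N\bbLambda^m$ is weakly contractible for every $m\in\NN_0$. This last point is then immediate. The poset $\bbLambda^m$ is the face poset of the $1$-dimensional simplicial complex $0-1-\dots-m$ (equivalently, the ``fence'' poset on the singletons $[k,k]$ and edges $[k,k+1]$, with relations $[k,k]<[k,k+1]>[k+1,k+1]$ and no others), so $N\bbLambda^m$ is the barycentric subdivision of a path, hence again a finite path and therefore contractible. (Alternatively: $\bbLambda^m$ has no chain of length two, so $N\bbLambda^m$ is one-dimensional, and inspecting covering relations shows it is precisely the path graph on the $2m+1$ vertices $[0,0],[0,1],[1,1],\dots,[m,m]$.)

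I do not expect a genuine obstacle; the one point that needs care is the variance. One must check weak contractibility of $\iota^n\downarrow[i,j]$ — objects of $\bbLambda^n$ mapping \emph{into} $[i,j]$ — rather than of $[i,j]\downarrow\iota^n$, which is empty as soon as $j-i\ge 2$, since no interval of length $\le 1$ contains $[i,j]$. This is exactly as it should be: $\iota^n$ is cofinal in the sense that renders it harmless for the limit description of $r_\scC^*F$ in Lemma~\ref{st:r_C^*F|n,a using bbSigma^n}, i.e.\ restriction along $N\bbLambda^n\hookrightarrow N\bbSigma^n$ preserves limits of $\bbSigma^n$-shaped diagrams (equivalently, spaces of sections of left fibrations over $N\bbSigma^n$). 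With the comma categories pinned down, the remaining work is the routine contractibility computation sketched above.
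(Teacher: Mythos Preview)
Your proof is correct and follows essentially the same approach as the paper: identify the comma category $\iota^n_{/[i,j]}$ with $\bbLambda^{j-i}$ and observe that its nerve is weakly contractible. The paper's proof is a one-line version of yours, omitting both the explicit contractibility argument for $N\bbLambda^m$ (which it takes for granted) and your careful remark on variance; your added detail is sound and consistent with the Cisinski convention used in the paper, where ``cofinal'' is the notion relevant to limits.
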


\begin{proof}
Given an object $[i,j] \in \bbSigma^n$, the comma category $\iota^n_{/[i,j]}$ is equivalent to $\bbLambda^{j-i}$, whose nerve is a weakly contractible simplicial set.
\end{proof}

The following is already in~\cite{HM:Left_fibs_and_hocolims} (see before Lemma~4.1); we include it here for completeness.

\begin{lemma}
\label{st:r_c^* via bbLambda}
Let $F \colon \scC \to \sSet$ be a functor, and let $F^\dashv \colon \bbDelta^\opp \times \scC \to \Set$ denote the associated functor defined by $F^\dashv([i],c) = F(c)_i$.
The elements $(\alpha, x)$ of $r_\scC^*F$ are in canonical bijection with pairs $(\alpha, z)$, where $z$ is a section of $\iota^{n*} (\Sigma \alpha)^* F^\dashv \colon \bbLambda^n \to \Set$, or, equivalently, with elements of $\lim(\iota^{n*} (\Sigma \alpha)^* F^\dashv \colon \bbLambda^n \to \Set)$.
\end{lemma}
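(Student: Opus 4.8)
The plan is to deduce the statement from Lemma~\ref{st:r_C^*F|n,a using bbSigma^n} together with the cofinality of $\iota^n$ recorded in Lemma~\ref{st:bbLambda to bbSigma is cofinal}. Fix an $n$-simplex $\alpha \in N_n\scC$. By Lemma~\ref{st:r_C^*F|n,a using bbSigma^n}, the set $(r_\scC^*F)_{n,\alpha}$ of $n$-simplices of $r_\scC^*F$ lying over $\alpha$ is in canonical bijection with $\lim\big((\Sigma\alpha)^*F^\dashv \colon \bbSigma^n \to \Set\big)$. Restriction of compatible families along $\iota^n \colon \bbLambda^n \hookrightarrow \bbSigma^n$ yields a natural map
\begin{equation}
	\lim\big((\Sigma\alpha)^*F^\dashv \colon \bbSigma^n \to \Set\big) \longrightarrow \lim\big(\iota^{n*}(\Sigma\alpha)^*F^\dashv \colon \bbLambda^n \to \Set\big)\,,
\end{equation}
so it suffices to prove that this map is a bijection; precomposing with the bijection of Lemma~\ref{st:r_C^*F|n,a using bbSigma^n} then yields the asserted bijection between $(r_\scC^*F)_{n,\alpha}$ and $\lim\big(\iota^{n*}(\Sigma\alpha)^*F^\dashv\colon\bbLambda^n\to\Set\big)$, naturally in $F$ and compatibly with the simplicial structure, since all constructions involved are functorial.

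To establish bijectivity, I would invoke the elementary fact that restriction along a functor $\iota \colon \scA \to \scB$ of small categories preserves limits of $\Set$-valued functors as soon as every comma category $\iota_{/b}$ is connected and nonempty. For $\iota = \iota^n$ this hypothesis is satisfied: the proof of Lemma~\ref{st:bbLambda to bbSigma is cofinal} identifies $\iota^n_{/[i,j]}$ with $\bbLambda^{j-i}$, whose nerve is weakly contractible and which is in particular connected and nonempty. Explicitly, writing $G = (\Sigma\alpha)^*F^\dashv$, the inverse of the displayed map sends a compatible family $z = (z_{[a,b]})_{[a,b]\in\bbLambda^n}$ to the family $y$ with $y_{[i,j]} := G\big([i,i] \hookrightarrow [i,j]\big)(z_{[i,i]})$; connectedness of $\iota^n_{/[i,j]}$ shows that this value is independent of the object of $\bbLambda^n$ chosen to map into $[i,j]$ (any zigzag through $\iota^n_{/[i,j]}$ gives the same element, using compatibility of $z$) and that $y$ is a compatible family over $\bbSigma^n$ restricting to $z$, while injectivity of the displayed map is immediate because every $[i,j] \in \bbSigma^n$ receives a morphism from an object of $\bbLambda^n$.

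I expect no real difficulty here; the one point to handle with a little care is to extract from Lemma~\ref{st:bbLambda to bbSigma is cofinal} only its $1$-categorical consequence---connectedness and nonemptiness of the comma categories $\iota^n_{/[i,j]}$, which is what controls strict limits of $\Set$-valued functors---rather than a statement about homotopy limits, since the limits occurring in this lemma are ordinary limits in $\Set$.
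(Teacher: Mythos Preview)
Your proof is correct and follows essentially the same approach as the paper's: combine Lemma~\ref{st:r_C^*F|n,a using bbSigma^n} with Lemma~\ref{st:bbLambda to bbSigma is cofinal} and the observation that homotopy cofinality of $\iota^n$ implies ordinary (1-categorical) cofinality, so that the limits over $\bbSigma^n$ and $\bbLambda^n$ agree. The paper's proof is a single sentence to this effect; you have simply unpacked the 1-categorical content (connectedness and nonemptiness of the comma categories $\iota^n_{/[i,j]}$) and written out the inverse map explicitly, which is a welcome elaboration but not a different argument.
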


\begin{proof}
This follows from Lemma~\ref{st:r_C^*F|n,a using bbSigma^n} together with Lemma~\ref{st:bbLambda to bbSigma is cofinal} and the fact that homotopy cofinal functors are in particular cofinal.
\end{proof}

\begin{remark}
The upshot of using $\bbSigma^n$ over $\bbLambda^n$ is that the former assemble into a (strict) functor $\bbSigma \colon \bbDelta \to \Cat$.
The categories $\bbLambda^n$ do \textit{not} produce a functor $\bbDelta \to \Cat$, but rather we only obtain a functor $\bbDelta_{int} \to \Cat$, where $\bbDelta_{int} \subset \bbDelta$ is the wide subcategory with only the \textit{inert} morphisms; see~\cite[Def.~5.1, Def.~5.2]{Haugseng:Iterated_spans} for further details.
\qen
\end{remark}

The categories $\bbSigma^{n,\opp}$ assemble into a \textit{strict} functor
\begin{equation}
	\bbSigma^\opp \colon \bbDelta \longrightarrow \Cat\,,
	\quad
	[n] \longmapsto \bbSigma^{n,\opp}\,.
\end{equation}
It sends a morphism $u \colon [n] \to [m]$ to the functor
\begin{equation}
	\bbSigma^\opp(u) \colon \bbSigma^{n,\opp} \to \bbSigma^{m,\opp}\,,
	\quad
	[i,j] \longmapsto \big[ u(i), u(j) \big]\,.
\end{equation}
The Grothendieck construction of $\bbSigma^\opp$ is a functor $\textint \bbSigma^\opp \longrightarrow \bbDelta$.
We form the (strict) pullback of categories
\begin{equation}
\label{eq:diag for alternative pres of r_C^*}
\begin{tikzcd}[column sep=2cm, row sep=1cm]
	\Big( \bbDelta_{/N\scC} \underset{\bbDelta}{\times} \textint \bbSigma^\opp \Big)^\opp
	\cong \Big( \textint (\pr^* \bbSigma^\opp) \Big)^\opp
	\ar[r] \ar[d, "\pi"']
	& \big( \textint \bbSigma^\opp \big)^\opp \ar[d]
	&
	\\
	\big( \bbDelta_{/N\scC} \big)^\opp \ar[r, "\pr"']
	& \bbDelta^\opp
\end{tikzcd}
\end{equation}
Consider the functor
\begin{equation}
\label{eq:def Sigma}
	\Sigma^\opp \colon \bbDelta_{/N\scC} \underset{\bbDelta}{\times} \textint \bbSigma^\opp
	\longrightarrow \bbDelta \times \scC^\opp
\end{equation}
defined as follows:
given an object $(\alpha \colon [n] \to \scC, [i,j] \in \bbSigma^n)$, we set $\Sigma^\opp(\alpha,[i,j]) = ([i], \alpha_j)$.
A morphism $(\alpha \colon [n] \to \scC, [i,j]) \longrightarrow (\beta \colon [k] \to \scC, [i',j'])$ in $\bbDelta_{/N\scC} \times_\bbDelta \textint \bbSigma^\opp$ is given by a morphism $v \colon [n] \to [k]$ such that $\beta \circ v = \alpha$ and $[i',j'] \subset [v(i), v(j)]$, i.e.~$v(i) \leq i' \leq j' \leq v(j)$.
The functor $\Sigma^\opp$ sends this to the pair $(v_{|[0,i]}, \beta_{j', v(j)})$ of the following data:
\begin{myenumerate}
\item $v_{|[0,i]} \colon [i] \to [i']$ is the restriction of the morphism $v \colon [n] \to [k]$ to the interval $[i] \cong [0,i] \subset [n]$; by the condition that $[i',j'] \subset [v(i), v(j)]$ we have that $v(i) \leq i'$ and this restriction factors through $[i'] \cong [0,i'] \subset [k]$.

\item The morphism $\beta_{j', v(j)}$ is the unique morphism $\beta_{j'} \to \beta_{v(j)} = \alpha_j$ induced by the functor $\beta \colon [k] \to \scC$; again, this relies on the condition that $[i',j'] \subset [v(i), v(j)]$.
\end{myenumerate}

\begin{lemma}
\label{st:Ran for strict Grothendieck constructions}
Let $G \colon \scD \to \Cat$ be a \emph{strict} functor, and let $\pi \colon (\textint G)^\opp \to \scD^\opp$ be the opposite of the Grothendieck construction of $G$.
\begin{myenumerate}
\item The fibres $\pi^{-1}(d)$ coincide with the values $G(d)^\opp$; in particular, they depend strictly functorially on $d \in \scD$.

\item Let $\scE$ be a complete category.
For any functor $S \colon (\textint G)^\opp \to \scE$, the right Kan extension $\pi_* S$ is isomorphic in $\Fun(\scD^\opp, \scE)$ to the functor
\begin{equation}
	\rmR_\pi S \colon \scD^\opp \to \scE\,,
	\quad
	d \longmapsto \lim \big( G(d)^\opp \hookrightarrow (\textint G)^\opp \overset{S}{\longrightarrow} \scE \big)\,.
\end{equation}
\end{myenumerate}
\end{lemma}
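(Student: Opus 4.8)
The plan is to handle part~(1) by a direct inspection of the Grothendieck construction, and part~(2) by combining the pointwise formula for right Kan extensions with the fact that $\pi$ is a split Grothendieck fibration.

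For part~(1): by definition a morphism $(d,x) \to (d',x')$ of $(\textint G)^\opp$ is a pair $(f,\phi)$ with $f \colon d' \to d$ in $\scD$ and $\phi \colon G(f)(x') \to x$ in $G(d)$, and $\pi$ sends it to $f$ regarded as a morphism $d \to d'$ of $\scD^\opp$. Such a morphism lies over $\id_d$ exactly when $f = \id_d$, in which case it is the datum of a morphism $\phi \colon x' \to x$ of $G(d)$, with composition in $(\textint G)^\opp$ agreeing with composition in $G(d)^\opp$; hence $\pi^{-1}(d) = G(d)^\opp$ on the nose. Since $G$ is strict, $\textint G \to \scD$ is a split Grothendieck opfibration --- the cocartesian lift of $f \colon d \to d'$ at $(d,x)$ being $(f,\id)\colon (d,x) \to (d', G(f)(x))$ --- whence $\pi$ is a split Grothendieck fibration over $\scD^\opp$. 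By the classical correspondence between split fibrations over a base $\scB$ and strict functors $\scB^\opp \to \Cat$, its straightening is a strict functor $\scD \to \Cat$; it sends $d$ to $G(d)^\opp$, and computing the relevant cartesian lift in $(\textint G)^\opp$ --- the lift with target $(d,x')$ over the $\scD^\opp$-incarnation of $f$ is the morphism $(d', G(f)(x')) \to (d,x')$ induced by $(f,\id)\colon (d,x') \to (d', G(f)(x'))$ in $\textint G$ --- shows it sends $f \colon d \to d'$ to $G(f)^\opp$. This is the claimed strict functoriality, and it is via this functor that $\rmR_\pi S$ in part~(2) is regarded as a functor on $\scD^\opp$.

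For part~(2): since $\scE$ is complete, $\pi_* S = \Ran_\pi S$ exists and is computed pointwise by $(\pi_* S)(d) = \lim\big( (d \downarrow \pi) \longrightarrow (\textint G)^\opp \overset{S}{\longrightarrow} \scE \big)$, where the comma category $(d \downarrow \pi)$ has objects the pairs $(e, g \colon d \to \pi(e))$ in $\scD^\opp$ and the first functor is the projection. I would then show the inclusion $j_d \colon G(d)^\opp = \pi^{-1}(d) \hookrightarrow (d \downarrow \pi)$, $x \mapsto ((d,x), \id_d)$, is initial, so that restriction along it does not change the limit. To this end, fix an object $((d'', x_0), g)$ of $(d \downarrow \pi)$ and write $\bar g \colon d'' \to d$ for the morphism of $\scD$ underlying $g$. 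The cartesian lift of $g$ at $(d'', x_0)$ --- namely the morphism $\tilde g \colon (d, G(\bar g)(x_0)) \to (d'', x_0)$ of $(\textint G)^\opp$ induced by $(\bar g, \id)\colon (d'', x_0) \to (d, G(\bar g)(x_0))$ in $\textint G$ --- is a terminal object of the comma category $j_d \downarrow ((d'', x_0), g)$: any object of the latter is a morphism $h \colon (d,x) \to (d'', x_0)$ of $(\textint G)^\opp$ lying over $g$, and the cartesian property of $\tilde g$ yields a unique factorisation $h = \tilde g \circ k$ with $k$ over $\id_d$, i.e.~a unique morphism $k \colon x \to G(\bar g)(x_0)$ of $G(d)^\opp$, which is precisely a unique morphism to the distinguished object. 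Since a comma category with a terminal object is nonempty and connected, $j_d$ is initial, and hence $(\pi_* S)(d) \arisom \lim\big( G(d)^\opp \hookrightarrow (\textint G)^\opp \overset{S}{\longrightarrow} \scE \big) = (\rmR_\pi S)(d)$.

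Finally I would verify that these pointwise identifications are natural in $d$: because the cleavage of $\pi$ is split (as $G$ is strict), the cartesian lifts, and with them the inclusions $j_d$, are compatible with the transition maps along morphisms of $\scD^\opp$, so the identifications intertwine the structure maps of $\pi_* S$ with those of $\rmR_\pi S$ coming from part~(1). Alternatively one may argue purely through the universal property of $\Ran_\pi$, noting that a natural transformation into $\rmR_\pi S$ is the same datum, fibrewise, as a cone, hence the same as a natural transformation into $S$ after restricting along $\pi$. I expect the only real obstacle to be the initiality statement in part~(2); everything else is routine bookkeeping with the Grothendieck construction, where the one point to keep track of is the two nested applications of $(-)^\opp$ --- to $\scD$ and, implicitly, to each category $G(d)$ --- which is where variances could get crossed.
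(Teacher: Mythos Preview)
Your argument is correct. For part~(1) you and the paper do the same direct inspection. For part~(2) you reach the same conclusion as the paper---that the fibre inclusion $\pi^{-1}(d) \hookrightarrow (d \downarrow \pi)$ is initial---but by a different and more elementary route.

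The paper argues as follows: the Grothendieck construction $\textint G \to \scD$ has a nerve which is a \emph{proper} morphism of $\infty$-categories (citing \cite[Prop.~7.3.2]{Cisinski:HCats_HoAlg}); therefore $N\pi$ is \emph{smooth}, and for smooth morphisms the fibre inclusion into the comma category is cofinal \cite[Thm.~4.4.36]{Cisinski:HCats_HoAlg}. Since homotopy cofinal implies $1$-categorically cofinal, the limit formula follows. You instead verify initiality directly, exhibiting a terminal object in each comma category $j_d \downarrow ((d'', x_0), g)$ via the cartesian lift. This is really the classical observation that, for a Grothendieck fibration, the fibre inclusion into the comma category admits a right adjoint (sending $((d'', x_0), g)$ to $G(\bar g)(x_0)$); your terminal-object statement is the unit-level manifestation of this. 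So your argument is self-contained and purely $1$-categorical, while the paper's is shorter on the page but leans on Cisinski's $\infty$-categorical machinery of proper/smooth maps. Both approaches leave naturality to the reader; your remark that the split cleavage makes the adjoints strictly compatible is adequate, and indeed a bit more than the paper says.
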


The fact that $\rmR_\pi S$ is functorial makes use of the assumption that $G \colon \scD \to \Cat$ is strict.

\begin{proof}
Claim~(1) is an immediate property of the Grothendieck construction:
An object in $(\textint G)^\opp$ consists of a pair $(d, x)$, where $d \in \scD$ and $x \in G(d)$.
A morphism $(d, x) \to (d', x')$ in $(\textint G)^\opp$ is a pair $(f, g)$, where $f \in \scD(d',d)$ and $g \in G(d)(Gf(x'),x)$.
The claim then follows by the assumption that $G$ is strict; in particular, the value of $G$ on the identity morphism of $d$ is the identity functor on $G(d)$.

The Grothendieck construction produces a functor $\textint G \to \scD$ whose nerve is a proper morphism of $\infty$-categories~\cite[Prop.~7.3.2]{Cisinski:HCats_HoAlg}; the nerve of $\pi$ is therefore a smooth morphism.
In particular, it follows that, for each $d \in \scD$, the nerve of the inclusion $G(d)^\opp = \pi^{-1}(d) \hookrightarrow d/\pi$ is a cofinal morphism in $\sSet$~\cite[Thm.~4.4.36]{Cisinski:HCats_HoAlg}.
Equivalently, the functor $G(d)^\opp = \pi^{-1}(d) \hookrightarrow d/\pi$ is homotopy cofinal, and in particular a cofinal functor of categories.
This induces the natural isomorphism $\pi_*S \cong \rmR_\pi S$.
\end{proof}

Let $\scD$ be a small category, and let $A \colon \scD^\opp \to \Set$ be a presheaf on $\scD$.
We recall the standard equivalence (see, for instance,~\cite[Rmk.~1.1.14]{Cisinski:HCats_HoAlg})
\begin{equation}
	\Psi \colon \Fun \big( (\scD_{/A})^\opp, \Set \big)
	\longrightarrow \Fun(\scD^\opp, \Set)_{/A}\,.
\end{equation}
Explicitly, given a presheaf $F$ on $\scD_{/A}$ and $d \in \scD$, we have
\begin{equation}
\label{eq:PSh(D/A) --> PSh(D)/A}
	\Psi F(d) = \bigg( \coprod_{\phi \in A(d)} F(\phi)
	\longrightarrow \coprod_{\phi \in A(d)} * \cong A(d) \bigg)\,.
\end{equation}
Let $h \colon \scD \to \Fun(\scD^\opp, \Set)$ denote the Yoneda embedding of $\scD$.
The inverse functor of $\Psi$ sends an object $\psi \colon X \to A$ in $\Fun(\scD^\opp, \Set)_{/A}$ to the presheaf on $\scD_{/A}$ whose value on $\phi \colon h_d \to A$ is the set of factorisations \smash{$\phi = (h_d \to X \overset{\psi}{\to} A)$} of $\phi$ through $\psi$.

In the particular case where $\scD = \bbDelta$ and $A = N \scC$, this provides an equivalence
\begin{equation}
	\Psi \colon\Fun \big( (\bbDelta_{/N\scC})^\opp, \Set \big)
	\longrightarrow \Fun(\bbDelta^\opp, \Set)_{/N\scC} = \sSet_{/N\scC}\,.
\end{equation}
With $\Sigma$ as in~\eqref{eq:def Sigma}, we introduce the the composite $s_\scC^* \coloneqq \Psi \circ \pi_* \circ \Sigma^* \circ (-)^\dashv$, i.e.
\begin{equation}
\begin{tikzcd}[row sep=0.cm]
	s_\scC^* \colon \Fun(\scC, \sSet) \ar[r, "{(-)^\dashv}"]
	& \Fun(\bbDelta^\opp \times \scC, \Set) \ar[r, "\Sigma^*"]
	& \Fun \big( (\bbDelta_{/N\scC})^\opp \times_{\bbDelta^\opp} (\textint \bbSigma^\opp)^\opp,\, \Set \big) \ar[r, "\pi_*"]
	& \cdots
	\\
	\cdots \ \Fun \big( (\bbDelta_{/N\scC})^\opp, \Set \big) \ar[r, "\Psi"]
	& \sSet_{/N\scC}\,.
	& &
\end{tikzcd}
\end{equation}
Analogously, we introduce the composite
\begin{equation}
	t_\scC^* \coloneqq \Psi \circ \rmR_\pi \circ \Sigma^* \circ (-)^\dashv
	\colon \Fun(\scC, \sSet) \longrightarrow \sSet_{/N\scC}\,.
\end{equation}
We thus obtain three functors
\begin{equation}
	r_\scC^*\,, s_\scC^*\,,
	t_\scC^* \colon \Fun(\scC, \sSet) \longrightarrow \sSet_{/N\scC}\,.
\end{equation}

\begin{proposition}
\label{st:alternative presentation of r_C^*}
There are canonical isomorphisms
\begin{equation}
	r_\scC^*(F)
	\cong s_\scC^* (F)
	\cong t_\scC^*(F)\,,
\end{equation}
in $\sSet_{/N\scC}$, natural in $F \in \Fun(\scC, \sSet)$.
\end{proposition}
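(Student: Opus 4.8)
The plan is to establish the two isomorphisms separately. The isomorphism $s_\scC^*(F) \cong t_\scC^*(F)$ is the easy one: it is a formal consequence of Lemma~\ref{st:Ran for strict Grothendieck constructions}, applied to the strict functor $G = \bbSigma^\opp \colon \bbDelta \to \Cat$ pulled back along $\pr \colon (\bbDelta_{/N\scC})^\opp \to \bbDelta^\opp$ (so that $\textint(\pr^*\bbSigma^\opp) \to \bbDelta_{/N\scC}$ is the relevant Grothendieck construction, as in diagram~\eqref{eq:diag for alternative pres of r_C^*}). Indeed, part~(2) of that lemma gives a natural isomorphism $\pi_* S \cong \rmR_\pi S$ for every functor $S$ into a complete category, here $S = \Sigma^* \circ (-)^\dashv$ with values in $\Set$; postcomposing with $\Psi$ and precomposing with $(-)^\dashv$ yields $s_\scC^* \cong t_\scC^*$ naturally in $F$. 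I would state this first and dispose of it in one line.

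The substantive isomorphism is $r_\scC^*(F) \cong t_\scC^*(F)$, and here the strategy is to compute $t_\scC^*(F)$ explicitly, simplex by simplex, and match it against the explicit description of $r_\scC^*(F)$ recalled after Definition~\ref{def:rectification functor}. Unwinding the definition of $\Psi$ in~\eqref{eq:PSh(D/A) --> PSh(D)/A} with $\scD = \bbDelta$, $A = N\scC$, an $n$-simplex of $\Psi(G')$ for a presheaf $G'$ on $\bbDelta_{/N\scC}$ is a pair $(\alpha, y)$ with $\alpha \in N_n\scC$, i.e.\ $\alpha \colon [n] \to \scC$ viewed as an object of $\bbDelta_{/N\scC}$, and $y \in G'(\alpha)$. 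Applying this to $G' = \rmR_\pi(\Sigma^* F^\dashv)$ and using part~(2) of Lemma~\ref{st:Ran for strict Grothendieck constructions}, the value $G'(\alpha)$ is exactly $\lim$ over the fibre $\pi^{-1}(\alpha)^\opp$ of the composite $\Sigma^* F^\dashv$. By part~(1) of that same lemma, the fibre $\pi^{-1}(\alpha)$ is the category $\bbSigma^{n,\opp}$ (sitting over $\alpha$), so $\pi^{-1}(\alpha)^\opp = \bbSigma^n$; and one checks that the restriction of $\Sigma^\opp$ from~\eqref{eq:def Sigma} to this fibre agrees, after passing to opposites and composing with $(-)^\dashv$, with the functor $(\Sigma\alpha)^* F^\dashv \colon \bbSigma^n \to \Set$ from~\eqref{eq:Sigma alpha} and Lemma~\ref{st:r_C^*F|n,a using bbSigma^n}. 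Hence an $n$-simplex of $t_\scC^*(F)$ is a pair $(\alpha, y)$ with $y \in \lim\big((\Sigma\alpha)^* F^\dashv \colon \bbSigma^n \to \Set\big)$ — which is precisely the description of the $n$-simplices of $r_\scC^*(F)$ furnished by Lemma~\ref{st:r_C^*F|n,a using bbSigma^n}.

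It then remains to check that these bijections on simplices are compatible with the simplicial structure maps and with the projections to $N\scC$, and that everything is natural in $F$. On the $r_\scC^*$ side the face and degeneracy maps are given by precomposition $w^*$ with $w \colon [n'] \to [n]$, which under Lemma~\ref{st:r_C^*F|n,a using bbSigma^n} corresponds to restriction along the functor $\bbSigma^\opp(w)\colon\bbSigma^{n'}\to\bbSigma^n$, $[i,j]\mapsto[w(i),w(j)]$; on the $t_\scC^*$ side the structure maps come from functoriality of $\rmR_\pi$ in $\bbDelta_{/N\scC}$, i.e.\ from the morphism $w \colon (\alpha\circ w) \to \alpha$ in $\bbDelta_{/N\scC}$, and by construction of $\Sigma^\opp$ and of $\rmR_\pi$ this again induces restriction along $[i,j]\mapsto[w(i),w(j)]$. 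So the two agree. The projection to $N\scC$ is $(\alpha,y)\mapsto\alpha$ on both sides by construction of $\Psi$. Naturality in $F$ is immediate since all four functors $(-)^\dashv$, $\Sigma^*$, $\rmR_\pi$ (resp.\ $\pi_*$), $\Psi$ are functorial and the matching bijection is built only from the structural data. The main obstacle I anticipate is purely bookkeeping: keeping the several opposite-category operations consistent (the Grothendieck construction is applied to $\bbSigma^\opp$, then one takes opposites again, and $F^\dashv$ lives on $\bbDelta^\opp\times\scC$), so that the fibre genuinely comes out as $\bbSigma^n$ with the functor $(\Sigma\alpha)^*F^\dashv$ rather than some variance-flipped relative. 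Once that identification is pinned down carefully, the proof is a direct comparison of explicit formulas.
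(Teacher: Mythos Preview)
Your proposal is correct and follows essentially the same approach as the paper: the paper's proof is a terse one-liner invoking Lemma~\ref{st:r_C^*F|n,a using bbSigma^n}, equation~\eqref{eq:PSh(D/A) --> PSh(D)/A}, and Lemma~\ref{st:Ran for strict Grothendieck constructions}, and you have unpacked exactly that combination in detail (including the simplicial-structure and naturality checks the paper leaves implicit). Your care with the opposite-category bookkeeping and the identification of the fibre $\pi^{-1}(\alpha)$ with $\bbSigma^n$ via part~(1) of Lemma~\ref{st:Ran for strict Grothendieck constructions} is well placed; this is precisely where the argument needs attention, and your account handles it correctly.
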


\begin{proof}
This is now a combination of Lemma~\ref{st:r_C^*F|n,a using bbSigma^n}, equation~\eqref{eq:PSh(D/A) --> PSh(D)/A} and Lemma~\ref{st:Ran for strict Grothendieck constructions}.
Note that the coproduct in our original definition~\eqref{eq:r_C^* explicit} corresponds here to the coproduct in the construction of the functor $\Psi$~\eqref{eq:PSh(D/A) --> PSh(D)/A}.
\end{proof}

This allows us to show the following properties of $r_\scC^*$, which are important in applications:

\begin{corollary}
The functor $r_\scC^* \colon \Fun(\scC, \sSet) \longrightarrow \sSet_{/N\scC}$ preserves filtered colimits.
\end{corollary}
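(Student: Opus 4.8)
The plan is to reduce the statement to the elementary fact that filtered colimits commute with finite limits in $\Set$, using the presentation of $r_\scC^* F$ from Lemma~\ref{st:r_C^*F|n,a using bbSigma^n} (equivalently, Proposition~\ref{st:alternative presentation of r_C^*}). Colimits in $\sSet_{/N\scC}$ are created by the forgetful functor to $\sSet$, and colimits of simplicial sets are formed levelwise; hence it suffices to prove that, for each $n \in \NN_0$, the functor $F \mapsto (r_\scC^* F)_n$ preserves filtered colimits. By~\eqref{eq:r_C^* explicit} we have $(r_\scC^* F)_n = \coprod_{\alpha \in N_n\scC} (r_\scC^* F)_{n,\alpha}$, and coproducts commute with filtered colimits in $\Set$, so it is enough to show that $F \mapsto (r_\scC^* F)_{n,\alpha}$ preserves filtered colimits for each fixed $\alpha \in N_n\scC$.

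By Lemma~\ref{st:r_C^*F|n,a using bbSigma^n} there is a natural isomorphism $(r_\scC^* F)_{n,\alpha} \cong \lim\big( (\Sigma\alpha)^* F^\dashv \colon \bbSigma^n \to \Set \big)$. Here $F \mapsto F^\dashv$ is an isomorphism of categories, and $(\Sigma\alpha)^*$ is precomposition along $\Sigma\alpha \colon \bbSigma^n \to \bbDelta^\opp \times \scC$, hence preserves all colimits (colimits in functor categories valued in $\Set$ are computed objectwise). Thus $F \mapsto (\Sigma\alpha)^* F^\dashv$ preserves filtered colimits, and such colimits in $\Fun(\bbSigma^n, \Set)$ are again objectwise. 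Finally, $\bbSigma^n$ is a finite category — its objects are the pairs $[i,j]$ with $0 \le i \le j \le n$, and there is at most one morphism between any two of them — so $\lim \colon \Fun(\bbSigma^n, \Set) \to \Set$ preserves filtered colimits. Composing, $F \mapsto (r_\scC^* F)_{n,\alpha}$ preserves filtered colimits, and the claim follows. One may equally run the argument through the finite subcategory $\bbLambda^n \subset \bbSigma^n$ of Lemma~\ref{st:r_c^* via bbLambda}.

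I expect no real obstacle; the one pitfall to avoid is trying to argue that every factor in the composite $s_\scC^* = \Psi \circ \pi_* \circ \Sigma^* \circ (-)^\dashv$ preserves filtered colimits, which fails for the right Kan extension $\pi_*$ in general. The argument is saved by Lemma~\ref{st:Ran for strict Grothendieck constructions}, which exhibits the relevant values of $\pi_*$ as \emph{finite} limits (indexed by the fibres $\bbSigma^n$ of $\pi$) — precisely the point used above.
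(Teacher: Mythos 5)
Your proof is correct and rests on the same key idea as the paper's proof, namely that $(r_\scC^* F)_{n,\alpha}$ is a \emph{finite} limit over $\bbSigma^n$, so it commutes with filtered colimits in $\Set$. The only cosmetic difference is that you reduce levelwise to Lemma~\ref{st:r_C^*F|n,a using bbSigma^n} directly, whereas the paper phrases the same argument through the composite $t_\scC^* = \Psi \circ \rmR_\pi \circ \Sigma^* \circ (-)^\dashv$ of Proposition~\ref{st:alternative presentation of r_C^*} and observes that $(-)^\dashv$, $\Sigma^*$, $\Psi$ preserve all colimits while $\rmR_\pi$ preserves filtered ones for the same finite-limit reason.
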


\begin{proof}
We use the natural isomorphism $r_\scC^*F \cong \Psi \circ \rmR_\pi \circ \Sigma^* (F^\dashv)$.
Since colimits in functor categories are computed objectwise, $(-)^\dashv$ and $\Sigma^*$ preserve all colimits.
The value of $\rmR_\pi \circ \Sigma^* (F^\dashv)$ at an object $\alpha \colon [n] \to \scC$ of $\bbDelta_{/[n]}$ is computed as a \textit{finite} limit over $\bbSigma^n$.
Thus, $\rmR_\pi$ commutes with filtered colimits.
Finally, the equivalence $\Psi$ preserves all colimits.
\end{proof}

\begin{lemma}
The functor $r_\scC^* \colon \Fun(\scC, \sSet) \longrightarrow \sSet_{/N\scC}$ sends injective cofibrations (i.e.~objectwise monomorphisms) to cofibrations (i.e.~monomorphisms over $N\scC$).
\end{lemma}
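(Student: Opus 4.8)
The plan is to reduce the statement to injectivity of maps of sets, one simplicial degree at a time. Recall that a morphism of simplicial sets is a monomorphism precisely when it is injective in each degree, and that the forgetful functor $\sSet_{/N\scC} \to \sSet$ creates monomorphisms, so that the cofibrations in the covariant model structure are exactly the morphisms whose underlying map of simplicial sets is a levelwise injection. Thus, fixing an objectwise monomorphism $f \colon F \to G$ in $\Fun(\scC, \sSet)$, it suffices to show that $(r_\scC^*F)_n \to (r_\scC^*G)_n$ is injective for every $n \in \NN_0$. By the explicit description~\eqref{eq:r_C^* explicit} and the description of $r_\scC^*(f)$ recorded above, this map is compatible with the coproduct decomposition indexed by $\alpha \in N_n\scC$, so it is enough to prove that $(r_\scC^*F)_{n,\alpha} \to (r_\scC^*G)_{n,\alpha}$ is injective for each $n$-simplex $\alpha$ of $N\scC$.

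First I would argue directly from the definition of $(r_\scC^*F)_{n,\alpha}$ as the set of natural transformations $\Delta^\cdot \circ \pr \to F \circ \alpha \circ \tau_{[n]}$. Such a transformation is a family $(x_u)_u$ of maps $x_u \colon \Delta^k \to F(\alpha_{u(k)})$ indexed by $u \colon [k] \to [n]$, satisfying the compatibility conditions~\eqref{eq:compatibility in r_C^*F}; in particular it is an element of the product $\prod_{u}\sSet(\Delta^k, F(\alpha_{u(k)}))$. The map induced by $f$ sends $(x_u)_u$ to $(f_{\alpha_{u(k)}} \circ x_u)_u$. Since $f$ is an objectwise monomorphism, each $f_c \colon F(c)\to G(c)$ is a monomorphism of simplicial sets, hence each postcomposition map $\sSet(\Delta^k, F(\alpha_{u(k)})) \to \sSet(\Delta^k, G(\alpha_{u(k)}))$ — which is just $F(\alpha_{u(k)})_k \to G(\alpha_{u(k)})_k$ — is injective. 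Therefore the product map is injective, and so is its restriction to the subset of natural transformations, giving the claim.

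Alternatively, and perhaps more cleanly, one can invoke Lemma~\ref{st:r_c^* via bbLambda} (or Proposition~\ref{st:alternative presentation of r_C^*}): an objectwise monomorphism $f$ induces an objectwise injection $F^\dashv \to G^\dashv$ of $(\bbDelta^\opp \times \scC)$-indexed diagrams of sets, which remains an objectwise injection after pulling back along $\Sigma\alpha$ and $\iota^n$; since the limit functor $\Fun(\bbLambda^n,\Set) \to \Set$ is a right adjoint it preserves monomorphisms of diagrams, so the induced map on $\lim(\iota^{n*}(\Sigma\alpha)^*F^\dashv) \to \lim(\iota^{n*}(\Sigma\alpha)^*G^\dashv)$, i.e.\ on $(r_\scC^*F)_{n,\alpha} \to (r_\scC^*G)_{n,\alpha}$, is injective. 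There is no substantial obstacle here; the only point requiring care is the bookkeeping that monomorphisms of simplicial sets are detected degreewise and that $\sSet(\Delta^k, X) = X_k$, so that an objectwise monomorphism of diagrams of simplicial sets does produce degreewise injections of the relevant sets of natural transformations.
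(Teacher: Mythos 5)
Your proposal is correct and takes essentially the same approach as the paper: decompose the map on $n$-simplices by the base simplex $\alpha \in N_n\scC$, then use that $f$ is an objectwise (hence degreewise) monomorphism to get injectivity on each component $(r_\scC^*F)_{n,\alpha} \to (r_\scC^*G)_{n,\alpha}$. The paper's proof first invokes Proposition~\ref{st:alternative presentation of r_C^*} to pass to the $\bbSigma^n$-presentation $s_\scC^*$, so that the fibre data is the finite family $(x_{ij})$ -- this is exactly your second, ``alternative'' paragraph -- whereas your primary argument works directly with the redundant family $(x_u)_u$ from the original definition of $r_\scC^*$; the two are only cosmetically different, and both are correct.
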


\begin{proof}
Let $f \colon F \to G$ be an objectwise monomorphism in $\Fun(\scC, \sSet)$.
By Proposition~\ref{st:alternative presentation of r_C^*}, it suffices to prove the same claim for $s_\scC^*$.
Let $(\alpha, x), (\beta, y) \in s_\scC^* F$ and suppose that $(s_\scC^*f)(\alpha,x) = (s_\scC^*f)(\beta, y)$.
This readily implies $\alpha = \beta$.
The data $x = (x_{i,j})_{i,j = 0, \ldots, n}$ and $y = (y_{i,j})_{i,j = 0, \ldots, n}$ are sections of $\Sigma \alpha^*F$ (in the notation of~\eqref{eq:Sigma alpha}).
Under $s_\scC^*f$, these are sent to the sections $f(x) = (f(x_{i,j}))_{i,j = 0, \ldots, n}$ and $f(y) = (f(y)_{i,j})_{i,j = 0, \ldots, n}$ of $\Sigma \alpha^*G$, which are equal by our assumption.
Since $f$ was an objectwise monomorphism, it follows that $x = y$.
\end{proof}

The following observation is crucial:
it will allow us to enhance $r_\scC^*$ to a functor taking values in the $\infty$-category of functors with values in Cisinski's $\infty$-category of spaces (see Definition~\ref{def:oo-Cat of spaces} below).
We give a short proof for completeness.

\begin{lemma}
\label{st:r^* and change of index category}
\emph{\cite[Rmk.~3.2.5.7]{Lurie:HTT}}
Let $\psi \colon \scD \to \scC$ and $F \colon \scC \to \sSet$ be functors of (1-)categories.
\begin{myenumerate}
\item There is a pullback square of simplicial sets
\begin{equation}
\label{eq:r^*_[-] yields pb squares}
\begin{tikzcd}
	r_\scD^*(\psi^*F) \ar[r] \ar[d] & r_\scC^*F \ar[d]
	\\
	N\scD \ar[r, "N\psi"'] & N\scC
\end{tikzcd}
\end{equation}
That is, there is a canonical natural isomorphism
\begin{equation}
	r_\scD^* \circ \psi^* \cong (N\psi)^* \circ r_\scC^*\,.
\end{equation}

\item For any $\alpha \colon [n] \to \scD$, there is a canonical bijection
\begin{equation}
	(r_\scC^*F)_{\psi_*\alpha,n} \cong \big( r_\scD^*(\psi^*F) \big)_{\alpha,n}\,.
\end{equation}
\end{myenumerate}
\end{lemma}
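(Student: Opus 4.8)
The plan is to read both statements off the explicit description~\eqref{eq:r_C^* explicit} of $r_{(-)}^*$: part~(2) is essentially immediate from the defining formula for the sets $(r_\scC^*F)_{n,\alpha}$, and part~(1) follows by assembling the fibrewise bijections of~(2) into an isomorphism of simplicial sets over $N\scD$.

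First I would prove~(2). An $n$-simplex $\alpha \colon [n] \to \scD$ of $N\scD$ is carried by $N\psi$ to $\psi_*\alpha = \psi \circ \alpha \colon [n] \to \scC$, and by definition of the restriction functor one has $(\psi^*F) \circ \alpha = F \circ \psi \circ \alpha = F \circ (\psi_*\alpha)$ as functors $[n] \to \sSet$. Since the last-vertex functor $\tau_{[n]} \colon \bbDelta_{/[n]} \to [n]$ and the projection $\pr \colon \bbDelta_{/[n]} \to \bbDelta$ depend only on $[n]$, the defining formula yields
\begin{equation}
	\big( r_\scD^*(\psi^*F) \big)_{n,\alpha}
	= \Fun(\bbDelta_{/[n]}, \sSet) \big( \Delta^\cdot \circ \pr,\, F \circ (\psi_*\alpha) \circ \tau_{[n]} \big)
	= (r_\scC^*F)_{n, \psi_*\alpha}\,,
\end{equation}
where the first equality uses $(\psi^*F)\circ\alpha = F\circ(\psi_*\alpha)$ and the second is again the defining formula; thus the two sides are literally the same set of natural transformations, which is the claimed bijection.

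To deduce~(1), recall that the structure map $p \colon r_\scC^*F \to N\scC$ restricts on the summand $(r_\scC^*F)_{n,\alpha}$ of~\eqref{eq:r_C^* explicit} to the constant map at $\alpha$, so the fibre of $p$ over $\beta \in N_n\scC$ is exactly $(r_\scC^*F)_{n,\beta}$. Hence the levelwise pullback $N\scD \times_{N\scC} r_\scC^*F$, whose set of $n$-simplices consists of the pairs $(\alpha,\sigma)$ with $\alpha \in N_n\scD$, $\sigma \in (r_\scC^*F)_n$ and $p(\sigma) = N\psi(\alpha)$, satisfies
\begin{equation}
	\big( N\scD \times_{N\scC} r_\scC^*F \big)_n
	= \coprod_{\alpha \in N_n\scD} (r_\scC^*F)_{n, \psi_*\alpha}
	\cong \coprod_{\alpha \in N_n\scD} \big( r_\scD^*(\psi^*F) \big)_{n,\alpha}
	= \big( r_\scD^*(\psi^*F) \big)_n\,,
\end{equation}
where the middle isomorphism is~(2). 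These bijections are compatible with the simplicial operators: for $w \colon [n'] \to [n]$ the action $(\alpha,x) \mapsto (w^*\alpha, w^*x)$ on $r_\scC^*F$ is given purely by precomposition and $\psi_*(w^*\alpha) = w^*(\psi_*\alpha)$, so the identifications intertwine the two simplicial structures; they are also visibly compatible with the projections to $N\scD$. Consequently the canonical comparison morphism induced by the projection $r_\scD^*(\psi^*F) \to N\scD$ and the map $(\alpha,x) \mapsto (\psi_*\alpha, x)$ (well defined by~(2)) is an isomorphism $r_\scD^*(\psi^*F) \weq N\scD \times_{N\scC} r_\scC^*F$ in $\sSet_{/N\scD}$, which is precisely the pullback square of~(1). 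Naturality in $F$ is clear, since every map in sight acts on the indexing families $x$ through the functoriality of $F$.

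This is a chase through the definitions and presents no genuine difficulty; the only point requiring attention is verifying that the levelwise bijections respect the simplicial operators, so that one obtains an isomorphism of simplicial sets rather than a mere levelwise bijection, and this is immediate from the explicit description of the simplicial structure on $r_\scC^*F$ given in the construction preceding Definition~\ref{def:rectification functor}. One could alternatively run the same computation through the presentation $r_\scC^* \cong t_\scC^*$ of Proposition~\ref{st:alternative presentation of r_C^*}, using that $\psi$ induces a functor $(\bbDelta_{/N\scD})^\opp \to (\bbDelta_{/N\scC})^\opp$ over $\bbDelta^\opp$ compatible with the data of $\Sigma^\opp$, but the direct computation above is shorter.
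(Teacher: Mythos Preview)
Your proof is correct and follows essentially the same approach as the paper's: both unwind the explicit formula~\eqref{eq:r_C^* explicit} and use the observation that $(\psi^*F)\circ\alpha = F\circ(\psi_*\alpha)$ to identify the fibres. The only cosmetic difference is that the paper establishes the pullback square~(1) directly by identifying $n$-simplices of $r_\scD^*(\psi^*F)$ with triples $(\alpha,\eta',\beta)$ subject to $\beta=\psi_*\alpha$, and then deduces~(2) from~(1) via the pasting law, whereas you prove~(2) first and assemble the fibrewise bijections into~(1); the underlying computation is the same.
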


\begin{proof}
An $l$-simplex of $r_\scD^*(\psi^*F)$ consists of a pair $(\alpha,\eta)$, where $\alpha \colon [l] \to \scD$ is an $l$-simplex of $N\scD$, and where $\eta \colon \Delta^\cdot \circ \pr \longrightarrow F \circ \psi \circ \alpha \circ \tau_{[l]}$ is a natural transformation of functors $\bbDelta_{/[l]} \to \sSet$ (in the notation from before~\eqref{eq:r_C^* explicit}).
This is equivalently a triple $(\alpha, \eta', \beta)$ of an $l$-simplex $\alpha$ in $N\scD$, an $l$-simplex $\beta$ of $N\scC$ and a natural transformation $\eta' \colon \Delta^\cdot \circ \pr \longrightarrow F \circ \beta \circ \tau_{[l]}$, such that $\beta = \psi_*\alpha$.
This identification establishes that diagram~\eqref{eq:r^*_[-] yields pb squares} is a pullback square in $\sSet$.
Part~(2) can either be seen explicitly by unravelling the definition of $r_\scD^*F$, or by applying the pasting law for pullbacks to Part~(1).
\end{proof}

\subsection{Homotopy-theoretic properties of the rectification functor}

In this section we collect various homotopy-theoretic properties of the rectification functor $r_\scC^*$ (see Definition~\ref{def:rectification functor}).
Given a $\rmU$-small category $\scC$, we endow $\Fun(\scC, \sSet)$ with the projective model structure.
The following statement is a combination of~\cite[Thm.~C, Prop.~5.1, Prop.~5.2]{HM:Left_fibs_and_hocolims}:

\begin{theorem}
\label{st:r_! r^* QEq}
There is a pair of Quillen equivalences
\begin{equation}
\begin{tikzcd}[column sep=1.25cm]
	\Fun(\scC, \sSet) \ar[r, shift left=0.15cm, "h_{\scC!}", "\perp"' yshift=0.05cm]
	& \sSet_{/N\scC}
	\ar[r, shift left=0.15cm, "r_{\scC!}", "\perp"' yshift=0.05cm]
	\ar[l, shift left=0.15cm, "h_\scC^*"]
	& \Fun(\scC, \sSet)\,. \ar[l, shift left=0.15cm, "r_\scC^*"]
\end{tikzcd}
\end{equation}
On the level of total derived functors between homotopy categories, the functors $\bbL r_{\scC!}$ and $\bbL h_{\scC!}$, and hence also the functors $\bbR r_\scC^*$ and $\bbR h_\scC^*$, are weak mutual inverses.
\end{theorem}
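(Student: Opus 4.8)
The plan is to treat the two Quillen adjunctions one at a time, show that each is a Quillen equivalence, and then compare the two derived equivalences they induce; the first task is essentially bookkeeping, the second and third reproduce results of Heuts--Moerdijk, and the clause that $\bbL r_{\scC!}$ and $\bbL h_{\scC!}$ are mutual inverses is where the two constructions must genuinely be matched. First I would fix the adjunctions. The functor $r_\scC^*$ preserves all limits, as one reads off from the presentation $r_\scC^* \cong \Psi \circ \rmR_\pi \circ \Sigma^* \circ (-)^\dashv$ of Proposition~\ref{st:alternative presentation of r_C^*}, in which $(-)^\dashv$ and $\Psi$ are equivalences, $\Sigma^*$ is a restriction functor, and $\rmR_\pi$ is computed objectwise by a limit (Lemma~\ref{st:Ran for strict Grothendieck constructions}); since $\Fun(\scC,\sSet)$ and $\sSet_{/N\scC}$ are locally presentable, the adjoint functor theorem produces the left adjoint $r_{\scC!}$. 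The adjunction $h_{\scC!} \dashv h_\scC^*$ I would recall from~\cite{HM:Left_fibs_and_hocolims}, where $h_{\scC!}$ is the coend (bar-construction) model for the homotopy colimit over $N\scC$ and $h_\scC^*$ its right adjoint, given on $X \to N\scC$ by $c \mapsto X \times_{N\scC} N(\scC_{/c})$.

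Next I would check that, with the projective model structure on $\Fun(\scC,\sSet)$ (Theorem~\ref{st:proj/inj MoStr on sSet^C}) and the covariant one on $\sSet_{/N\scC}$ (Theorem~\ref{st:covariant MoStr}), the functor $h_{\scC!}$ is left Quillen and $r_\scC^*$ is right Quillen. Since a right adjoint is right Quillen as soon as it preserves fibrations and acyclic fibrations, for the $r$-adjunction it suffices to show that $r_\scC^*$ carries objectwise Kan fibrations to covariant fibrations and objectwise trivial Kan fibrations to trivial Kan fibrations over $N\scC$; using the characterisation of covariant fibrations in Theorem~\ref{st:covariant MoStr} and the explicit description of the simplices of $r_\scC^* F$, these become lifting problems against the generating (acyclic) cofibrations of $\sSet_{/N\scC}$ that can be solved objectwise --- this is Lurie's argument for the relative nerve~\cite[Prop.~3.2.5.18]{Lurie:HTT} together with the simplification of~\cite{HM:Left_fibs_and_hocolims}. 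For $h$ one argues dually that $h_{\scC!}$ sends projective (acyclic) cofibrations to covariant (acyclic) cofibrations, which is transparent from the bar-construction formula. To upgrade each Quillen adjunction to a Quillen equivalence it then suffices to verify that the derived unit and counit are covariant weak equivalences on cofibrant and fibrant objects respectively; the fibrant objects of $\sSet_{/N\scC}$ are the left fibrations $X \to N\scC$, and since weak equivalences between left fibrations over $N\scC$ are detected fibrewise (Theorem~\ref{st:covariant MoStr}), the change-of-index-category Lemma~\ref{st:r^* and change of index category} reduces the counit check to each vertex $c \in N\scC$, where pulling back along $N(\scC_{/c}) \to N\scC$ and using that $\scC_{/c}$ has a terminal object lets a cofinality argument (in the spirit of Lemmas~\ref{st:bbLambda to bbSigma is cofinal} and~\ref{st:Ran for strict Grothendieck constructions}) identify the homotopy fibre of $r_\scC^* F$ over $c$ with $F(c)$; the unit check is analogous. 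This reproduces~\cite[Thm.~C, Prop.~5.1]{HM:Left_fibs_and_hocolims}.

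For the final clause I would identify $\bbL h_{\scC!}$ with $\bbR r_\scC^*$ (equivalently $\bbR h_\scC^*$ with $\bbL r_{\scC!}$): granting this, $\bbL r_{\scC!} \circ \bbL h_{\scC!} \cong \bbL r_{\scC!} \circ \bbR r_\scC^* \cong \id$ and $\bbL h_{\scC!} \circ \bbL r_{\scC!} \cong \bbR r_\scC^* \circ \bbL r_{\scC!} \cong \id$ because the $r$-adjunction is a Quillen equivalence, and the corresponding statement for $\bbR r_\scC^*$ and $\bbR h_\scC^*$ follows formally by passing to inverse equivalences. The identification $\bbL h_{\scC!} \cong \bbR r_\scC^*$ comes from a natural comparison between the coend $h_{\scC!} F$ and the relative nerve $r_\scC^* F$ over $N\scC$ --- for instance from a natural isomorphism $r_{\scC!} \circ h_{\scC!} \cong \id_{\Fun(\scC,\sSet)}$, or at least a natural weak equivalence on projectively cofibrant diagrams, a class preserved by the left Quillen functor $h_{\scC!}$ --- which is~\cite[Prop.~5.2]{HM:Left_fibs_and_hocolims}. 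The hard part will be precisely this comparison together with the fibrewise computation of the previous step: both hinge on the fibrewise detection of covariant weak equivalences, on a cofinality argument pinning down the homotopy fibres of $r_\scC^* F \to N\scC$, and on matching the two straightening-type functors $h_{\scC!}$ and $r_\scC^*$ at the derived level. The alternative presentations of $r_\scC^*$ and the change-of-index-category Lemma~\ref{st:r^* and change of index category} developed earlier in this section are arranged so that these reductions go through cleanly; for the residual point-set homotopy theory I would cite~\cite{HM:Left_fibs_and_hocolims} rather than re-prove it.
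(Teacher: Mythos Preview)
The paper does not prove this theorem at all: it is stated as a direct citation of \cite[Thm.~C, Prop.~5.1, Prop.~5.2]{HM:Left_fibs_and_hocolims}, with no argument given. So there is nothing to compare on the level of a proof \emph{in the paper}. What the paper does do, immediately after the statement, is record the mechanism behind Heuts--Moerdijk's argument (Proposition~\ref{st:nat weqs for r_! and h_!}): a natural weak equivalence $\tau_X \colon r_{\scC!} h_{\scC!} X \to X$ for projectively cofibrant $X$, and a natural zig-zag $A \leftarrow L(A) \to h_{\scC!} r_{\scC!} A$ of covariant weak equivalences. The sentence preceding that proposition makes the logic explicit: one first shows that $\bbL r_{\scC!}$ and $\bbL h_{\scC!}$ are mutual weak inverses, and \emph{deduces} from this that both adjunctions are Quillen equivalences.

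Your proposal runs this in the opposite order. You first try to verify each adjunction is a Quillen equivalence on its own, via derived unit/counit arguments, and only afterwards match $\bbL h_{\scC!}$ with $\bbR r_\scC^*$ to obtain the mutual-inverse clause. That is logically sound but redundant, and more to the point it does not line up with the references you invoke: \cite[Prop.~5.1]{HM:Left_fibs_and_hocolims} is the map $r_{\scC!} h_{\scC!} X \to X$, not a unit or counit of either adjunction individually, and \cite[Prop.~5.2]{HM:Left_fibs_and_hocolims} is the zig-zag $A \leftarrow L(A) \to h_{\scC!} r_{\scC!} A$, not a comparison $\bbL h_{\scC!} \cong \bbR r_\scC^*$. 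The clean route is the one the paper indicates: take the two comparison maps of Proposition~\ref{st:nat weqs for r_! and h_!} as input, conclude that $\bbL r_{\scC!}$ and $\bbL h_{\scC!}$ are mutually inverse equivalences of homotopy categories, and infer that each Quillen adjunction is a Quillen equivalence (a left Quillen functor whose left derived functor is an equivalence is part of a Quillen equivalence). Your separate fibrewise unit/counit analysis is then unnecessary.
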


The proof that the Quillen adjunctions in Theorem~\ref{st:r_! r^* QEq} are in fact Quillen equivalences is established by showing that the left derived functors of $h_{\scC!}$ and $r_{\scC!}$ are weak mutual inverses.
This relies on the following result:

\begin{proposition}
\label{st:nat weqs for r_! and h_!}
\emph{\cite[Props.~5.1 and~5.2]{HM:Left_fibs_and_hocolims}}
Let $X \in \Fun(\scC, \sSet)$ and $A \in \sSet_{/N\scC}$.
\begin{myenumerate}
\item There exists a natural transformation $\tau_X \colon r_{\scC!} h_{\scC!} X \to X$, which is a weak equivalence in $\Fun(\scC, \sSet)$ whenever $X$ is projectively cofibrant.

\item There exists a natural zig-zag of weak equivalences $A \leftarrow L(A) \rightarrow h_{\scC!} r_{\scC!} A$ in $\sSet_{/N\scC}$.
\end{myenumerate}
\end{proposition}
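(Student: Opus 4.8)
The plan is to prove the statement following Heuts--Moerdijk~\cite{HM:Left_fibs_and_hocolims}, in the two parts listed, with a common reduction step. By Theorem~\ref{st:r_! r^* QEq}, $r_{\scC!}$ and $h_{\scC!}$ are both left Quillen, so the composites $r_{\scC!}h_{\scC!}\colon\Fun(\scC,\sSet)\to\Fun(\scC,\sSet)$ and $h_{\scC!}r_{\scC!}\colon\sSet_{/N\scC}\to\sSet_{/N\scC}$ are again left Quillen; in particular each preserves colimits, cofibrations, and (by Ken Brown's lemma) weak equivalences between cofibrant objects. Since the projective and covariant model structures are left proper, a standard cellular induction reduces both claims to a check on generators: on the projective side, to the free diagrams $h_c\otimes\Delta^k$ (with $h_c=\scC(c,-)$ regarded as a diagram of discrete simplicial sets, $c\in\scC$, $k\ge0$), and on the covariant side, to the representables $\Delta^n\to N\scC$, i.e.~to $n$-simplices $\alpha\colon[n]\to\scC$ --- recalling that every object of $\sSet_{/N\scC}$ is cofibrant, as the cofibrations there are the monomorphisms (Theorem~\ref{st:covariant MoStr}). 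On the projective side the reduction goes further: $h_c\otimes\Delta^k\to h_c$ is an objectwise weak equivalence between cofibrant objects, so applying $r_{\scC!}h_{\scC!}$ and using two-out-of-three reduces part~(1) to checking $\tau$ on the diagrams $h_c$ alone.

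For part~(1), I would produce $\tau$ by adjunction: giving a natural transformation $\tau\colon r_{\scC!}h_{\scC!}\Rightarrow\id$ amounts, under $r_{\scC!}\dashv r_\scC^*$, to giving a natural comparison $j_X\colon h_{\scC!}X\to r_\scC^* X$ in $\sSet_{/N\scC}$, and one writes $j_X$ down directly on simplices: $h_{\scC!}X$ is the ``thin'' model of the diagram $X$ over $N\scC$ while $r_\scC^* X$ is its relative nerve (Definition~\ref{def:rectification functor}, in the concrete form of Section~\ref{sec:r^*}), and $j_X$ is the evident inclusion of simplices. It then remains to check that $\tau_{h_c}$ is an objectwise weak homotopy equivalence, which is a direct computation: $h_{\scC!}h_c$ is a model for the left fibration over $N\scC$ classified by the set-valued functor $\scC(c,-)$ --- e.g.~the coslice nerve $N(\scC_{c/})\to N\scC$, with discrete fibre $\scC(c,c')$ over $c'\in\scC$ --- and $r_{\scC!}$ carries it to a diagram whose value at each $c'$ is a simplicial set weakly equivalent to $\coprod_{\scC(c,c')}*$, with $\tau_{h_c}$ the resulting objectwise weak equivalence onto $\scC(c,-)=h_c$.

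For part~(2) --- the harder half --- I would build a functorial resolution $L$ on $\sSet_{/N\scC}$ through the category of simplices. Let $\bbDelta_{/A}$ be the category whose objects are simplices $\sigma\colon\Delta^n\to A$; composing with $A\to N\scC$ gives an $n$-simplex $\alpha_\sigma\colon[n]\to\scC$, and $A=\colim_{\sigma\in\bbDelta_{/A}}\Delta^n$ over $N\scC$. Put $L(A):=\colim_{\sigma\in\bbDelta_{/A}}N(\bbDelta_{/[n_\sigma]})$, the structure map to $N\scC$ induced termwise by $N(\alpha_\sigma\circ\tau_{[n_\sigma]})$, where $\tau_{[n]}\colon\bbDelta_{/[n]}\to[n]$ is the last-vertex functor of Section~\ref{sec:r^*}. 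Two natural maps emanate from $L(A)$. First, the nerve of $\tau_{[n]}$ gives $N(\bbDelta_{/[n]})\to\Delta^n$, hence an augmentation $\pi_A\colon L(A)\to A$. Second, from the identification $r_{\scC!}(\Delta^n_\alpha)\cong\Lan_{\alpha\circ\tau_{[n]}}(\Delta^\cdot\circ\pr)$ --- immediate from the adjunction $r_{\scC!}\dashv r_\scC^*$ and the description of $r_\scC^*$ in Section~\ref{sec:r^*} --- together with a change-of-indexing-category compatibility for $h_{(-)!}$ dual to Lemma~\ref{st:r^* and change of index category}, one obtains termwise comparison maps $N(\bbDelta_{/[n]})\to h_{\scC!}r_{\scC!}(\Delta^n_{\alpha_\sigma})$, which assemble, using cocontinuity of $h_{\scC!}r_{\scC!}$ (so that $\colim_\sigma h_{\scC!}r_{\scC!}\Delta^n_{\alpha_\sigma}\cong h_{\scC!}r_{\scC!}A$), into a natural map $L(A)\to h_{\scC!}r_{\scC!}A$. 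It remains to see both maps are covariant weak equivalences. For $\pi_A$: on each simplex it is the last-vertex map $N(\bbDelta_{/[n]})\to\Delta^n$, a covariant weak equivalence over $N\scC$ (both are weakly contractible, $\bbDelta_{/[n]}$ having the terminal object $\id_{[n]}$, and this map is cofinal), and since $\bbDelta_{/A}$ exhibits $A$ and $L(A)$ as colimits with monic latching maps, left properness of the covariant model structure propagates this to $L(A)\to A$. For the second map, by the reduction to generators it suffices that each $N(\bbDelta_{/[n]})\to h_{\scC!}r_{\scC!}\Delta^n_\alpha$ be a covariant weak equivalence, which one verifies using the change-of-indexing formula and a cofinality computation identifying $h_{(\bbDelta_{/[n]})!}(\Delta^\cdot\circ\pr)$, up to covariant equivalence, with the terminal object $N(\bbDelta_{/[n]})$ of $\sSet_{/N(\bbDelta_{/[n]})}$.

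I expect part~(2) to be the main obstacle. Part~(1) is a clean cellular induction once $j_X$ is in hand; but in part~(2) one cannot argue purely with derived functors, since $r_{\scC!}$ is not homotopical, so the resolution $L$ must be produced by hand and the comparison $L(A)\to h_{\scC!}r_{\scC!}A$ checked simplex-by-simplex. The two delicate points are (i) establishing the change-of-indexing-category compatibility for $h_{(-)!}$ in enough generality to reduce to the universal case $\scC=\bbDelta_{/[n]}$, and (ii) the cofinality computation showing that $h_{(\bbDelta_{/[n]})!}$ of the tautological diagram $\Delta^\cdot\circ\pr$ collapses, covariantly, to $N(\bbDelta_{/[n]})$.
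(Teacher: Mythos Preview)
The paper does not actually prove this proposition: it is stated with attribution to~\cite[Props.~5.1 and~5.2]{HM:Left_fibs_and_hocolims} and then used as input to the proof of Proposition~\ref{st:r_! r^* localised} and later results. There is therefore no proof in the paper to compare your proposal against.

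Your sketch is a reasonable reconstruction of the Heuts--Moerdijk argument, and the overall architecture (cellular reduction to generators, construction of $\tau$ as the adjoint transpose of an explicit comparison $h_{\scC!}X\to r_\scC^*X$, and the simplex-wise resolution $L(A)$ built from nerves of categories of simplices) matches what one finds there. A few cautionary notes on details: in part~(2), the assertion that the termwise weak equivalences $N(\bbDelta_{/[n]})\to\Delta^n$ assemble to a weak equivalence $L(A)\to A$ is not quite a matter of left properness alone, since the colimit over $\bbDelta_{/A}$ is not literally a cell complex; one needs either a Reedy-cofibrancy argument on the diagram $[n_\sigma]\mapsto N(\bbDelta_{/[n_\sigma]})$ or an induction up the skeletal filtration of $A$. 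Similarly, the ``change-of-indexing'' compatibility you invoke for $h_{(-)!}$ is not dual to Lemma~\ref{st:r^* and change of index category} in any formal sense---$h_{\scC!}$ is a left adjoint, not a right adjoint---so that step needs its own direct verification rather than an appeal to duality. These are the two points you already flagged as delicate, and they are where the actual work lies; but since the paper simply cites the result, your write-up goes well beyond what the paper itself provides.
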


Heuts-Moerdijk moreover prove the following consequence of Theorem~\ref{st:r_! r^* QEq}, which relates to the Kan-Quillen model structure rather than the covariant model structure:

\begin{theorem}
\label{st:h_! -| h^* QEq localised}
\emph{\cite[Cor.~D]{HM:Left_fibs_and_hocolims}}
Let $\scC$ be a small category, and let $L^B_\scC \Fun(\scC,\sSet)$ denote the left Bousfield localisation of the projective model structure on $\Fun(\scC,\sSet)$ at all morphisms in $\scC$ (or, more precisely, at their images under the coYoneda embedding).
The Quillen equivalence $h_{\scC!} \dashv h_\scC^*$ from Theorem~\ref{st:r_! r^* QEq} induces a Quillen equivalence
\begin{equation}
\begin{tikzcd}[column sep=1.2cm]
	L^B_\scC \Fun(\scC, \sSet) \ar[r, shift left=0.15cm, "h_{\scC!}", "\perp"' yshift=0.05cm]
	& \big( \sSet_{/N\scC} \big)_{KQ}\,, \ar[l, shift left=0.15cm, "h_\scC^*"]
\end{tikzcd}
\end{equation}
where $\sSet_{/N\scC}$ is endowed with the slice model structure induced from the Kan-Quillen model structure on $\sSet$.
\end{theorem}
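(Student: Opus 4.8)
The plan is to obtain the statement by transporting the Quillen equivalence $h_{\scC!} \dashv h_\scC^*$ of Theorem~\ref{st:r_! r^* QEq} across a left Bousfield localisation. I would invoke the standard fact (see~\cite{Hirschhorn:MoCats}) that a Quillen equivalence $L \dashv R \colon \scM \rightleftarrows \scN$ of left proper combinatorial model categories descends, for any set $S$ of morphisms of $\scM$, to a Quillen equivalence $L^B_S \scM \rightleftarrows L^B_{S'} \scN$, where $S'$ is a set of cofibrant replacements of the morphisms $\bbL L(s)$, $s \in S$. Applying this with $\scM = \Fun(\scC, \sSet)$ (projective model structure), $\scN = \sSet_{/N\scC}$ (covariant model structure), $L \dashv R = h_{\scC!} \dashv h_\scC^*$, and $S$ the set of coYoneda images of the morphisms of $\scC$ --- so that $L^B_S \scM$ is by definition $L^B_\scC \Fun(\scC, \sSet)$ --- reduces the claim to identifying the covariant model structure on $\sSet_{/N\scC}$ localised at $S'$ with the slice model structure $(\sSet_{/N\scC})_{KQ}$.

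Both of these are model structures on the presheaf category $\sSet_{/N\scC}$ whose cofibrations are precisely the monomorphisms: for $L^B_{S'}$ because a left Bousfield localisation does not change the cofibrations, and for $(\sSet_{/N\scC})_{KQ}$ because this is part of the definition of a slice model structure. Such a model structure is determined by its class of fibrant objects (this is the uniqueness phenomenon underlying Theorem~\ref{st:covariant MoStr}; cf.~\cite{Cisinski:HCats_HoAlg}), so it suffices to prove that the two classes of fibrant objects agree. The fibrant objects of $(\sSet_{/N\scC})_{KQ}$ are the Kan fibrations over $N\scC$, and those of the localised covariant model structure are the $S'$-local left fibrations over $N\scC$; thus it suffices to show that a left fibration $X \to N\scC$ is $S'$-local if and only if it is a Kan fibration.

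By the derived adjunction and the definition of $S'$, a left fibration $X \to N\scC$ is $S'$-local exactly when $\bbR h_\scC^*(X)$ is $S$-local in the projective model structure on $\Fun(\scC, \sSet)$. Here $\bbR h_\scC^*(X)$ is an objectwise-fibrant diagram whose value at $c \in \scC$ is weakly equivalent to the fibre $X_c$ of $X$ over the vertex $c$ --- this follows from the construction of $h_\scC^*$ as a relative-nerve-type functor in~\cite{HM:Left_fibs_and_hocolims}, just as the analogous computation for $r_\scC^*$ gives $(r_\scC^* F)$ the fibre $F(c)$ over $c$. On the other hand, by the enriched co-Yoneda lemma $\ul{\Fun(\scC,\sSet)}(\scC(c,-), G) \cong G(c)$, so an objectwise-fibrant diagram $G$ is $S$-local if and only if $G$ sends every morphism of $\scC$ to a weak equivalence. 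Combining these, $X \to N\scC$ is $S'$-local if and only if every transition map $X_c \to X_{c'}$ of the left fibration $X$ (along a morphism $c \to c'$ of $\scC$) is a weak equivalence; by a classical fact about fibrations of simplicial sets (see~\cite{Lurie:HTT}), this holds if and only if $X \to N\scC$ is also a right fibration, i.e.\ a Kan fibration. This establishes the equivalence of the two classes of fibrant objects, and hence the theorem.

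The main obstacle I anticipate is the identification of $\bbR h_\scC^*(X)$ --- specifically, that its value at $c$ computes the fibre of $X$ over $c$ up to weak equivalence, so that $S$-locality becomes the condition that all transition maps of $X$ are weak equivalences --- together with the cited input from~\cite{Lurie:HTT} that an edge-inverting left fibration is automatically a right fibration; this last point is the only place where the geometry separating left and right fibrations genuinely enters. By contrast, the transport of Bousfield localisations across the Quillen equivalence, and the uniqueness of a model structure on a presheaf category with monomorphic cofibrations given its fibrant objects, are purely formal.
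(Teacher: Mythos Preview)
The paper does not supply its own proof of this theorem; it simply cites \cite[Cor.~D]{HM:Left_fibs_and_hocolims}. However, from the proof of the companion Proposition~\ref{st:r_! r^* localised} and the Remark following it, one can read off that the Heuts--Moerdijk argument proceeds exactly along your lines: one first shows that $(\sSet_{/N\scC})_{KQ} = L^B_{h_{\scC!}S}(\sSet_{/N\scC})$ as model categories (this is \cite[Prop.~6.1]{HM:Left_fibs_and_hocolims}, established by comparing fibrant objects via the equivalence ``left fibration with invertible transition maps $\Leftrightarrow$ Kan fibration''), and then transports the Quillen equivalence $h_{\scC!} \dashv h_\scC^*$ across this left Bousfield localisation. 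Your overall plan is therefore essentially the intended one.

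One point deserves correction. You describe $h_\scC^*$ as ``a relative-nerve-type functor'', parallel to $r_\scC^*$. It is not: $r_\scC^*$ is the relative nerve (going from diagrams to objects over $N\scC$), whereas $h_\scC^*$ is the right adjoint of the homotopy-colimit functor $h_{\scC!}$ and goes the other way, with $h_\scC^*(X \to N\scC)(c) \cong \ul{\sSet_{/N\scC}}\big(N(c/\scC),\, X\big)$. Your conclusion that $(\bbR h_\scC^* X)(c) \simeq X_c$ for a left fibration $X \to N\scC$ is nonetheless correct---$N(c/\scC)$ has an initial object $\id_c$, and the space of sections of a left fibration over a simplicial set with an initial object is equivalent to the fibre there---but the justification you gave is inaccurate. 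With that repaired, your argument goes through and matches what the paper (via \cite{HM:Left_fibs_and_hocolims}) uses.
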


An analogue of this statement also holds true for the right-hand Quillen equivalence $r_{\scC!} \dashv r_\scC^*$ of Theorem~\ref{st:r_! r^* QEq}:

\begin{proposition}
\label{st:r_! r^* localised}
The Quillen equivalence $r_{\scC!} \dashv r_\scC^*$ from Theorem~\ref{st:r_! r^* QEq} induces a Quillen equivalence
\begin{equation}
\begin{tikzcd}[column sep=1.2cm]
	\big( \sSet_{/N\scC} \big)_{KQ} \ar[r, shift left=0.15cm, "r_{\scC!}", "\perp"' yshift=0.05cm]
	& L^B_\scC \Fun(\scC, \sSet)\,. \ar[l, shift left=0.15cm, "r_\scC^*"]
\end{tikzcd}
\end{equation}
\end{proposition}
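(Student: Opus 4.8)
The plan is to deduce the claim from the already-established Quillen equivalence $r_{\scC!} \dashv r_\scC^*$ of Theorem~\ref{st:r_! r^* QEq} together with the corresponding statement for $h_{\scC!} \dashv h_\scC^*$ in Theorem~\ref{st:h_! -| h^* QEq localised}, using the standard machinery of left Bousfield localisations. The key observation is that a left Bousfield localisation of a model category $\scM$ at a set $S$ of maps is characterised by a universal property on Quillen adjunctions: a left Quillen functor $\scM \to \scN$ descends to $L^B_S \scM$ precisely when it sends (cofibrant replacements of) the maps in $S$ to weak equivalences in $\scN$, and dually on the right. So I would first record the general lemma: if $F \colon \scM \rightleftarrows \scN \colon G$ is a Quillen equivalence and $S$ is a set of maps in $\scM$ such that $\bbL F$ sends $S$ to isomorphisms in $\Ho(\scN)$, then $F \dashv G$ descends to a Quillen equivalence $L^B_S \scM \rightleftarrows \scN$ — and more generally one can localise on both sides. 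The precise form needed here is the "transfer of Bousfield localisation across a Quillen equivalence" which can be cited from Hirschhorn.

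Concretely, I would argue as follows. The covariant model structure on $\sSet_{/N\scC}$ is itself a left Bousfield localisation of the slice model structure $(\sSet_{/N\scC})_{KQ}$... no—actually it is the other way: the Kan–Quillen slice structure has \emph{more} weak equivalences than the covariant one, so $(\sSet_{/N\scC})_{KQ}$ is the left Bousfield localisation of the covariant model structure at the maps inverted by the fibrewise-to-total comparison, equivalently at all the edges of $N\scC$ viewed as maps in $\sSet_{/N\scC}$; this is exactly the content underlying Theorem~\ref{st:h_! -| h^* QEq localised}. So I would: (i) identify $(\sSet_{/N\scC})_{KQ}$ as $L^B_E(\sSet_{/N\scC})_{\mathrm{cov}}$ for the appropriate set $E$ of maps (the images of the edges of $N\scC$, i.e.\ the "cells" $\Delta^{\{i\}} \to \Delta^1 \to N\scC$), a fact which is essentially Theorem~\ref{st:covariant MoStr} combined with the comparison of the covariant and Kan–Quillen structures; (ii) check that the left Quillen functor $r_{\scC!} \colon (\sSet_{/N\scC})_{\mathrm{cov}} \to \Fun(\scC,\sSet)_{\mathrm{proj}}$ sends each such edge-map to a map that becomes a weak equivalence after left Bousfield localising the target at all morphisms of $\scC$ (this is where one uses that $\bbL h_{\scC!}$ already inverts those edges by Theorem~\ref{st:h_! -| h^* QEq localised}, and that $\bbL r_{\scC!}$ and $\bbL h_{\scC!}$ are mutually inverse up to the identifications of Theorem~\ref{st:r_! r^* QEq} and Proposition~\ref{st:nat weqs for r_! and h_!}); (iii) invoke the universal property of left Bousfield localisation to get the induced Quillen adjunction $(\sSet_{/N\scC})_{KQ} \rightleftarrows L^B_\scC \Fun(\scC,\sSet)$; (iv) conclude it is a Quillen equivalence because the underlying adjunction was one and both model structures were obtained by localising at matching sets of maps — concretely, $\bbL r_{\scC!}$ and $\bbR r_\scC^*$ remain mutually inverse on the localised homotopy categories since these are full subcategories (the local objects) of the original homotopy categories, and the equivalence restricts.

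I expect the main obstacle to be step~(ii): verifying that $r_{\scC!}$ carries the edge-maps of $N\scC$ to $\scC$-local weak equivalences. The cleanest route is probably not a direct computation but rather a two-out-of-three / naturality argument: use the natural zig-zag $A \leftarrow L(A) \to h_{\scC!} r_{\scC!} A$ from Proposition~\ref{st:nat weqs for r_! and h_!}(2) to relate $r_{\scC!}$ to $h_{\scC!}$ up to the weak equivalences of the covariant structure, then apply $h_\scC^*$ and $\bbL h_{\scC!}$ and use Theorem~\ref{st:h_! -| h^* QEq localised}; since $\bbL h_{\scC!}$ sends the edge-maps to equivalences in $L^B_\scC \Fun(\scC,\sSet)$ by construction of that Bousfield localisation (the edges of $N\scC$ correspond under $h_{\scC!}$ precisely to the morphisms of $\scC$ one is inverting), and since $\bbL r_{\scC!} \circ \bbL h_{\scC!} \simeq \id$ by Theorem~\ref{st:r_! r^* QEq}, one gets the required statement for $r_{\scC!}$. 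The remaining checks — that the adjunction is Quillen for the localised structures, and that it stays a Quillen equivalence — are then formal consequences of Hirschhorn's results on Bousfield localisations, together with the observation that the derived unit and counit of $r_{\scC!} \dashv r_\scC^*$ from Theorem~\ref{st:r_! r^* QEq} are weak equivalences on \emph{all} objects, hence a fortiori on the local ones.
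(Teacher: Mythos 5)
Your plan is essentially the paper's proof: both rest on the transfer of a left Bousfield localisation across a Quillen equivalence (Hirschhorn's~\cite[Prop.~3.3.18]{Hirschhorn:MoCats}, which the paper cites by name), and both verify the required condition on the localising set using the natural weak equivalences from Proposition~\ref{st:nat weqs for r_! and h_!}. The paper's main argument runs via the localising set $h_{\scC!}S$, with $S$ the co-Yoneda images of morphisms of $\scC$, and verifies the condition through a simplicial mapping-space square built from the counit-type map $\tau_Y \colon r_{\scC!}h_{\scC!}Y \to Y$ of Proposition~\ref{st:nat weqs for r_! and h_!}(1); for the Quillen-equivalence step it observes that the natural weak equivalences survive the localisation, so the derived functors remain mutual weak inverses. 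Your version instead takes the edge-inclusion localising set $\Delta^{\{1\}} \hookrightarrow \Delta^1 \to N\scC$ (note that it must be the last-vertex inclusion $\{1\}$, since $\{0\} \hookrightarrow \Delta^1$ is already left anodyne and hence a covariant equivalence), and proposes to use the zig-zag of Proposition~\ref{st:nat weqs for r_! and h_!}(2) together with the conservativity of $\bbL h_{\scC!}$ from Theorem~\ref{st:h_! -| h^* QEq localised}. This is precisely the alternative the paper itself sketches in the Remark immediately after the proof, so your proposal and the paper's presentation are two close variants of the same argument.

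One slip to correct: you write that ``$\bbL h_{\scC!}$ sends the edge-maps to equivalences in $L^B_\scC \Fun(\scC,\sSet)$.'' This has the direction backwards --- $h_{\scC!}$ goes from $\Fun(\scC,\sSet)$ to $\sSet_{/N\scC}$, so it cannot take the edge maps as input. What you want, and what your parenthetical actually describes, is that $h_{\scC!}$ sends the co-Yoneda morphisms $h^g$ to the edge maps; then the zig-zag of Proposition~\ref{st:nat weqs for r_! and h_!}(2), combined with the fact that $h_{\scC!}$ reflects (local) weak equivalences between cofibrant objects (which all objects of $\sSet_{/N\scC}$ are), lets you conclude that $r_{\scC!}$ sends the edge maps to $\scC$-local weak equivalences. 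With that correction, the argument goes through.
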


\begin{proof}
Let $h^{(-)} \colon \scC^\opp \to \Fun(\scC, \sSet)$, $c \mapsto h^c = \scC(c,-)$ denote the co-Yoneda embedding of $\scC$.
Let $S$ denote the set of morphisms $h^g \colon h^d \to h^c$ in $\Fun(\scC, \sSet)$, where $g \colon c \to d$ ranges over all morphisms in $\scC$.
Note that each $h^c \in \Fun(\scC, \sSet)$ is projectively cofibrant.
It is shown in~\cite[Prop.~6.1]{HM:Left_fibs_and_hocolims} and the proof of Cor.~D in~\cite{HM:Left_fibs_and_hocolims} that
\begin{equation}
	\big( \sSet_{/N\scC} \big)_{KQ}
	= L^B_{h_{\scC!}S} \big( \sSet_{/N\scC} \big)
\end{equation}
as model categories.
We first show that the Quillen adjunction $r_{\scC!} : \sSet_{/N\scC} \rightleftarrows \Fun(\scC, \sSet) : r_\scC^*$ descends to a Quillen adjunction
\begin{equation}
\begin{tikzcd}[column sep=1.2cm]
	r_{\scC!} : L^B_{h_{\scC!} S} \big( \sSet_{/N\scC} \big) \ar[r, shift left=0.15cm, "\perp"' yshift=0.05cm]
	& L^B_\scC \Fun(\scC, \sSet) : r_\scC^*\,. \ar[l, shift left=0.15cm]
\end{tikzcd}
\end{equation}
By~\cite[Prop.~3.3.18]{Hirschhorn:MoCats}, it suffices to show that $r_{\scC!}$ takes each morphism $h^g \colon h^d \to h^c$ in $S$ to a weak equivalence in $L^B_\scC \Fun(\scC,\sSet)$.
To see this, let $X \in L^B_\scC \Fun(\scC, \sSet)$ be fibrant.
By Proposition~\ref{st:nat weqs for r_! and h_!} there exists a natural transformation $\tau_Y \colon r_{\scC!} h_{\scC!} Y \to Y$, which is a projective weak equivalence whenever $Y \in \Fun(\scC, \sSet)$ is projectively cofibrant.
This induces a commutative diagram of simplicial sets (recall that we denote the simplicial hom sets in a simplicially enriched category $\scM$ by $\ul{\scM}(-,-)$)
\begin{equation}
\begin{tikzcd}[column sep=2.5cm, row sep=1cm]
	\ul{\Fun(\scC, \sSet)}(r_{\scC!} h_{\scC!} h^c, X)
	\ar[r, "{(r_{\scC!} h_{\scC!} h^g)^*}"] \ar[d, "\tau_{(h^c)}^*"']
	& \ul{\Fun(\scC, \sSet)}(r_{\scC!} h_{\scC!} h^d, X)
	\ar[d, "\tau_{(h^d)}^*"]
	\\
	X(c) \ar[r, "{X(g)}"']
	& X(d)
\end{tikzcd}
\end{equation}
The vertical morphisms are weak equivalences because $X$ is projectively fibrant and corepresentables are cofibrant in $\Fun(\scC, \sSet)$.
The bottom morphism is a weak equivalence as well since $X$ is fibrant in $L^B_\scC \Fun(\scC, \sSet)$; that is, $X(g)$ is a weak equivalence for each morphism $g$ in $\scC$.
It thus follows that the top horizontal morphism is a weak equivalence, and hence, that the Quillen adjunction $r_{\scC!} \dashv h_{\scC!}$ descends to the localisations as claimed.

It remains to show that this descended adjunction is even a Quillen equivalence.
However, since the natural weak equivalences in Proposition~\ref{st:nat weqs for r_! and h_!} are still weak equivalences in the localisations, the fact that the left derived functors of $h_{\scC!}$ and $r_{\scC!}$ are mutual weak inverses still holds true at the level of the localisations.
\end{proof}

\begin{remark}
Alternatively, one can employ a different way of presenting $(\sSet_{/N\scC})_{KQ}$ as a Bousfield localisation of the covariant model structure on $\sSet_{/N\scC}$:
it is shown in~\cite[Lemma~6.1]{HM:Left_fibs_and_hocolims} and the proof of~\cite[Cor.~D]{HM:Left_fibs_and_hocolims} that $(\sSet_{/N\scC})_{KQ}$ agrees with the left Bousfield localisation of the covariant model structure at the morphisms
\begin{equation}
\begin{tikzcd}
	\Delta^{\{1\}} \ar[rr, hookrightarrow] \ar[dr, "y"']
	& & \Delta^1 \ar[dl, "f"]
	\\
	& N \scC &
\end{tikzcd}
\end{equation}
where $f$ ranges over all morphisms $f \colon f_0 \to f_1$ in $\scC$.
Note that $r_{\scC!} f_i = h^{f_i} = \scC(f_i,-)$ is the functor corepresented by $f_i$.
One thus needs to show that the morphism $h^{f_1} = r_{\scC!}(f_1) \to r_{\scC!} f$ is a weak equivalence in $L^B_\scC \Fun(\scC, \sSet)$.
Let $X \in L^B_\scC \Fun(\scC, \sSet)$ be fibrant.
We have a triangle
\begin{equation}
\begin{tikzcd}[column sep={4cm,between origins}]
	& \ul{\Fun(\scC, \sSet)} (r_{\scC!}f, X) \ar[dl] \ar[dr] &
	\\
	\ul{\Fun(\scC, \sSet)} (r_{\scC!}f_0, X)
	\cong X(f_0) \ar[rr]
	& & \ul{\Fun(\scC, \sSet)} (r_{\scC!}f_1, X)
	\cong X(f_1)
\end{tikzcd}
\end{equation}
and one can show that this commutes \textit{up to simplicial homotopy}.
The left-hand and bottom morphisms are weak equivalences between Kan complexes, and thus homotopy equivalences.
It then follows that the right-hand morphism is a homotopy equivalence as well, showing that $r_{\scC!} \dashv r_\scC^*$ descends to a Quillen adjunction between the localisations.
\qen
\end{remark}

\begin{remark}
The fact that $r_{\scC!} \dashv r_\scC^*$ descends to a Quillen adjunction between the localisations has been sketched without proof in~\cite{HM:Left_fibs_and_hocolims} (see the paragraph after Cor.~D).
We give a proof here for completeness.
\qen
\end{remark}

Let $\Kan \subset \sSet$ denote the full subcategory on the Kan complexes.

\begin{corollary}
\label{st:weakly constant diags and KanFibs}
Let $\scC$ be a small category, and let $F \colon \scC \to \Kan$ be a functor which sends each morphism in $\scC$ to a weak equivalence in $\sSet$.
Then, the morphism $r_\scC^*F \to N\scC$ is a Kan fibration.
\end{corollary}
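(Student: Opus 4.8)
The plan is to deduce this from the localised Quillen equivalence in Proposition~\ref{st:r_! r^* localised}, using that right Quillen functors preserve fibrant objects.

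First I would identify the fibrant objects on both sides. In the slice model structure $(\sSet_{/N\scC})_{KQ}$ the terminal object is $\id_{N\scC}$, and cofibrations, fibrations and weak equivalences are all created by the forgetful functor to $\sSet$; hence an object $p \colon X \to N\scC$ is fibrant if and only if the structure map $p$ is a Kan fibration. On the other side, by the theory of left Bousfield localisations \cite{Hirschhorn:MoCats} the fibrant objects of $L^B_\scC \Fun(\scC, \sSet)$ are the $S$-local projectively fibrant diagrams, where $S$ is the image under the co-Yoneda embedding of the set of morphisms of $\scC$. A diagram $F$ is projectively fibrant precisely when each $F(c)$ is a Kan complex, and for such $F$ the derived mapping space $\ul{\Fun(\scC,\sSet)}(h^c, F)$ is naturally isomorphic to $F(c)$ (since $h^c$ is projectively cofibrant), the map induced by $h^g \colon h^d \to h^c$ corresponding under this isomorphism to $F(g) \colon F(c) \to F(d)$. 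Thus a projectively fibrant $F$ is $S$-local exactly when $F(g)$ is a weak equivalence for every morphism $g$ of $\scC$.

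Consequently, a functor $F \colon \scC \to \Kan$ that sends every morphism of $\scC$ to a weak equivalence in $\sSet$ is fibrant in $L^B_\scC \Fun(\scC, \sSet)$. By Proposition~\ref{st:r_! r^* localised} the functor $r_\scC^* \colon L^B_\scC \Fun(\scC, \sSet) \to (\sSet_{/N\scC})_{KQ}$ is right Quillen, hence preserves fibrant objects; therefore $r_\scC^* F$ is fibrant in $(\sSet_{/N\scC})_{KQ}$, which by the first step means exactly that $r_\scC^* F \to N\scC$ is a Kan fibration.

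The argument is essentially bookkeeping once Proposition~\ref{st:r_! r^* localised} is available; the only point requiring a little care is the identification of the fibrant objects of the Bousfield localisation, i.e.\ checking that the enriched co-Yoneda isomorphism $\ul{\Fun(\scC,\sSet)}(h^c, F) \cong F(c)$ carries $(h^g)^*$ to $F(g)$, so that $S$-locality of $F$ translates into the condition that $F$ inverts the morphisms of $\scC$. One could alternatively first note, using the right-hand Quillen equivalence of Theorem~\ref{st:r_! r^* QEq}, that $r_\scC^* F$ is at least a left fibration over $N\scC$ whenever $F$ is projectively fibrant, and then upgrade this to a Kan fibration; but routing the whole argument through $(\sSet_{/N\scC})_{KQ}$ is the most economical option given the results already established.
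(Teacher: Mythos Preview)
Your proof is correct and follows exactly the approach the paper takes: the paper's proof is the single sentence ``A functor $F$ with these properties is precisely a fibrant object in $L^B_\scC \Fun(\scC,\sSet)$,'' which is just a compressed version of your argument via Proposition~\ref{st:r_! r^* localised} and the identification of fibrant objects on both sides. Your write-up simply makes explicit the bookkeeping (the description of fibrant objects in the Bousfield localisation and in the Kan--Quillen slice) that the paper leaves to the reader.
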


\begin{proof}
A functor $F$ with these properties is precisely a fibrant object in $L^B_\scC \Fun(\scC,\sSet)$.
\end{proof}

\section{From strict diagrams in $\sSet$ to coherent diagrams of spaces}
\label{sec:from strict to coherent diagrams}

In this section we start by recalling Cisinski's $\infty$-category of spaces $\scS$.
We describe in detail the data encoding maps from simplicial sets to $\scS$.
Combining this with the rectification functor $r_\scC^*$ from Section~\ref{sec:r^*} we build functors of $\infty$-categories $\gamma \colon N\Kan \to \scS$ and $\gamma_\scC \colon N\Fun(\scC, \Kan) \longrightarrow \scFun(N\scC, \scS)$, for any $\rmU$-small 1-category $\scC$.
Given a functor $F \colon \scC \to \Kan$, we show that the $\infty$-functor $\gamma_\scC F \colon N\scC \to \scS$ classifies the left fibration $r_\scC^*F \to N\scC$.

\begin{definition}
\label{def:oo-Cat of spaces}
\emph{\cite[Def.~5.2.3]{Cisinski:HCats_HoAlg}, \cite{Cisinski:HCats_HoAlg_Errata}}
We define the following two $\infty$-categories:
\begin{myenumerate}
\item The \emph{$\infty$-category $\scS$ of ($\rmU$-small) spaces} is the simplicial set given as follows:
an $n$-simplex of $\scS$ consists of a left fibration $p \colon X \to \Delta^n$ with $\rmU$-small fibres, together with a choice, for each $f \colon \Delta^m \to \Delta^n$, of a pullback square of $\rmU$-small simplicial sets
\begin{equation}
\begin{tikzcd}
	f^*X \ar[r] \ar[d] & X \ar[d]
	\\
	\Delta^m \ar[r, "u"'] & \Delta^n
\end{tikzcd}
\end{equation}
such that the pullback assigned to the identity on $\Delta^n$ is the identity on $X$.
The simplicial structure of $\scS$ arises from the pasting law for pullbacks

\item The \emph{$\infty$-category} of pointed ($\rmU$-small) spaces is the simplicial set $\scS_*$ whose $n$-simplices consist of a left fibration $p \colon X \to \Delta^n$ with $\rmU$-small fibres, together with a choice of a section $s$ of $p$ and, for each $u \colon \Delta^m \to \Delta^n$, of a pullback square of $\rmU$-small simplicial sets as above.
The simplicial structure of $\scS_*$ arises from the pasting law for pullbacks
\end{myenumerate}
\end{definition}

We denote the morphism which forgets the sections by $p_\univ \colon \scS_* \to \scS$.
We recall the following two key facts about the $\infty$-categories $\scS$ and $\scS_*$:

\begin{lemma}
\label{st:maps to scS}
\emph{\cite[Prop.~5.2.4]{Cisinski:HCats_HoAlg}}
Specifying a cartesian square
\begin{equation}
\begin{tikzcd}
	Y \ar[r, "\tilde{F}"] \ar[d, "f"']
	& \scS_* \ar[d, "p_\univ"]
	\\
	X \ar[r, "F"']
	& \scS
\end{tikzcd}
\end{equation}
where $f$ is a left fibration, is equivalent to choosing a cartesian square
\begin{equation}
\begin{tikzcd}
	\varphi^*X \ar[r, "\tilde{\varphi}"] \ar[d, "\varphi^*f"']
	& X \ar[d, "f"]
	\\
	\Delta^n \ar[r, "\varphi"']
	& Y
\end{tikzcd}
\end{equation}
for each map $\varphi \colon \Delta^n \to Y$, where $\varphi^*X$ is $\rmU$-small.
\end{lemma}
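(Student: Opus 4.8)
The plan is to reduce everything to the defining property of $\scS$ and $\scS_*$ --- that they parametrise left fibrations over simplices equipped with a coherent choice of pullbacks --- using that both morphisms out of a simplicial set and cartesianness of squares over a simplicial set are detected simplexwise.

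First I would record that $p_\univ \colon \scS_* \to \scS$ is a left fibration. An $n$-simplex of $\scS_*$ lying over a fixed $n$-simplex of $\scS$ --- the latter being a left fibration $q \colon Z \to \Delta^n$ with $\rmU$-small fibres together with its chosen pullbacks --- is exactly a section of $q$, so for $0 \le i < n$ a lifting problem for $\Lambda^n_i \hookrightarrow \Delta^n$ against $p_\univ$ unwinds to the problem of extending a section of $q$ over $q^{-1}(\Lambda^n_i)$ to a section over all of $Z$; this has a solution because $q$ is a left fibration and $\Lambda^n_i \hookrightarrow \Delta^n$ is left anodyne. In particular every base change of $p_\univ$ is again a left fibration, which is why the clause ``$f$ is a left fibration'' appears and why both directions of the claimed equivalence land in the same class of maps.

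For the equivalence itself, fix the left fibration $f \colon Y \to X$ with $\rmU$-small fibres. Since $X$ is the colimit of its category of simplices, a morphism $F \colon X \to \scS$ is the same datum as a family $(F_\varphi)$ with $F_\varphi \in \scS_n$ for every $\varphi \colon \Delta^n \to X$, subject to $u^*F_\varphi = F_{\varphi\circ u}$ for every $u \colon \Delta^m \to \Delta^n$, where $u^*$ is formed via the chosen pullback built into $F_\varphi$. By Definition~\ref{def:oo-Cat of spaces} this is precisely a left fibration with $\rmU$-small fibres over $X$ --- reconstructed as the colimit of the left fibrations $Z_\varphi \to \Delta^n$ underlying the $F_\varphi$ --- together with, for each $\varphi$, a chosen pullback square realising $Z_\varphi$ as the restriction of that left fibration along $\varphi$, compatibly under composition of simplices. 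Requiring the square on the left to be cartesian identifies this left fibration over $X$ with our fixed $f$ up to canonical isomorphism (via $Y \cong X\times_\scS\scS_*$), and then $\tilde F$ is forced to be the resulting projection; running the same simplexwise analysis with $\scS_*$ in place of $\scS$ shows the section recorded by $\tilde F$ adds nothing beyond cartesianness. Thus the datum of the cartesian square on the left is exactly a coherent system of chosen pullback squares of $f$ over the simplices of $X$ --- which is exactly the datum described on the right; conversely, from such a system one reads off, for each $\varphi$, the $n$-simplex $(\varphi^*Y \to \Delta^n,\ \text{chosen pullbacks})$ of $\scS$, compatibility gives the map $F$, and the cartesian square is recovered by pasting. (The smallness clause on the right matches the $\rmU$-smallness of the fibres of $f$.)

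The one point that requires care is matching the coherences: the chosen pullbacks along \emph{all} $u \colon \Delta^m \to \Delta^n$ packaged into a single $n$-simplex of $\scS$ must correspond exactly to the compatibility of the squares over $X$ under precomposing simplices by maps $\Delta^m \to \Delta^n$, and keeping this bookkeeping correct over the whole category of simplices of $X$ --- using the pasting law and the normalisation that the pullback along an identity is an identity, both already part of Definition~\ref{def:oo-Cat of spaces} --- is what upgrades the correspondence from a pointwise bijection to the asserted equivalence of structures. Everything else is a direct translation of definitions; alternatively, one may simply cite~\cite[Prop.~5.2.4]{Cisinski:HCats_HoAlg}.
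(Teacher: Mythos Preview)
The paper does not actually give a proof of this lemma; it merely records the statement with the attribution \cite[Prop.~5.2.4]{Cisinski:HCats_HoAlg} and moves on. So there is nothing to compare your argument against within the paper itself --- you have supplied strictly more than the paper does, and indeed you note at the end that one could simply cite Cisinski, which is exactly the paper's choice.

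Your argument is essentially the correct unpacking of the definitions (and is morally what Cisinski does): a map $X \to \scS$ is determined simplexwise, each simplex of $\scS$ is by Definition~\ref{def:oo-Cat of spaces} a left fibration over a standard simplex with coherent chosen pullbacks, and the cartesianness of the first square pins down the underlying left fibration as $f$. The reduction to the category of simplices of $X$ and the pasting-law bookkeeping are exactly right; this is the content the paper later spells out in Remarks~\ref{rmk:mps L --> scS} and~\ref{rmk:mps L --> scS as cart nat trafos}. One small caveat: the displayed second square in the statement has its labels scrambled (the roles of $X$ and $Y$ are swapped relative to the first square, and the base map should be $\varphi \colon \Delta^n \to X$), and you have silently corrected this in your argument --- which is the right thing to do, but worth flagging if you are writing this up.
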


\begin{definition}
\cite[Def.~5.2.5]{Cisinski:HCats_HoAlg}
In the situation of Lemma~\ref{st:maps to scS}, we say that \emph{the morphism $F$ classifies the left fibration $f$}.
\end{definition}

\begin{proposition}
\emph{\cite[Cor.~5.2.8]{Cisinski:HCats_HoAlg}}
The morphism $p_\univ \colon \scS_* \to \scS$ is a left fibration.
Every left fibration $f \colon Y \to X$ in $\sSet$ is classified by some map $F \colon X \to \scS$.
\end{proposition}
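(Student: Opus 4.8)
The statement has two parts: that $p_\univ$ is a left fibration, and that every left fibration in $\sSet$ is classified by a map into $\scS$. I would prove them in that order, the first being straightforward and the second carrying the real (though mild) bookkeeping.

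For the first part the plan is to check the right lifting property of $p_\univ \colon \scS_* \to \scS$ against the left horn inclusions $\Lambda^n_k \hookrightarrow \Delta^n$, $0 \le k < n$, directly from Definition~\ref{def:oo-Cat of spaces}. Unwinding the definitions, a commutative square formed by a map $\Lambda^n_k \to \scS_*$ and a map $\varphi \colon \Delta^n \to \scS$ is the same as: a left fibration $Z \to \Delta^n$ with $\rmU$-small fibres together with its chosen system of pullback squares (this is the simplex $\varphi$ of $\scS$), plus a map $s_0 \colon \Lambda^n_k \to Z$ over the inclusion $\Lambda^n_k \hookrightarrow \Delta^n$ (the section part of the horn in $\scS_*$; the pullback data of the horn is forced to agree with that of $\varphi$). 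A diagonal filler $\Delta^n \to \scS_*$ is then precisely a section $s \colon \Delta^n \to Z$ of $Z \to \Delta^n$ extending $s_0$ --- no new pullback data has to be produced. Such an $s$ is exactly a solution of the lifting problem
\[
\begin{tikzcd}
	\Lambda^n_k \ar[r, "s_0"] \ar[d, hook] & Z \ar[d] \\
	\Delta^n \ar[ur, dashed, "s"] \ar[r, equal] & \Delta^n
\end{tikzcd}
\]
and this exists because $Z \to \Delta^n$ is a left fibration and $\Lambda^n_k \hookrightarrow \Delta^n$ is a left horn inclusion.

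For the second part I would invoke Lemma~\ref{st:maps to scS}: producing a map $F \colon X \to \scS$ that classifies a given left fibration $f \colon Y \to X$ (together with the accompanying lift $\tilde F \colon Y \to \scS_*$) amounts to choosing, coherently in the simplices $\varphi$ of $X$, a pullback square of $f$ along each $\varphi \colon \Delta^n \to X$. To build such a coherent system I would fix once and for all a functorial model for pullbacks in $\sSet$ (the set-theoretic fibre product, normalised so that the pullback along an identity is literally an identity), set $\varphi^* Y$ to be the so-chosen pullback of $f$ along $\varphi$ --- a left fibration again, with $\rmU$-small total space since left fibrations are stable under pullback and $Y$ is $\rmU$-small --- and, for $u \colon \Delta^m \to \Delta^n$, take the required pullback square of $\varphi^* Y \to \Delta^n$ along $u$ to be the canonical one with corner $(\varphi u)^* Y$. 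The pasting law for pullbacks shows these squares are genuine pullbacks, that they are normalised at identities, and that the assignment $\varphi \mapsto F(\varphi) \coloneqq (\varphi^* Y \to \Delta^n,\ \{u\text{-pullbacks}\})$ is simplicial; and by construction the attached squares $\varphi^* Y \to Y$ assemble through Lemma~\ref{st:maps to scS} into a cartesian square witnessing that $F$ classifies $f$.

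The only subtle point, and the one I would be most careful about, is the strictness bookkeeping in the second part: the naive fibre product is functorial, but an iterated fibre product agrees with the corresponding composite fibre product only up to a canonical isomorphism. This is harmless here because Definition~\ref{def:oo-Cat of spaces} and Lemma~\ref{st:maps to scS} only demand squares that satisfy the universal property of a pullback (together with the normalisation at identity maps), so the canonical comparison isomorphisms suffice; once this is acknowledged, no genuine computation remains.
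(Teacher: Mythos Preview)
Your argument is correct. The paper gives no proof of this statement---it merely quotes \cite[Cor.~5.2.8]{Cisinski:HCats_HoAlg}---so there is nothing to compare against; your direct verification of the lifting property for $p_\univ$ and your appeal to Lemma~\ref{st:maps to scS} for the classification statement are the natural way to proceed (and essentially what one finds in Cisinski).

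One small clarification on your final paragraph. Your worry is misplaced, but not quite for the reason you give: it is \emph{not} the case that canonical comparison isomorphisms would suffice, since for $F \colon X \to \scS$ to be a morphism of simplicial sets the identity $A(\varphi \circ u; v) = A(\varphi; u \circ v)$ of Remark~\ref{rmk:mps L --> scS} must hold on the nose. The point is rather that in your construction this equality is already strict: both sides are literally the set-theoretic fibre product of $Y$ over $X$ along $\varphi \circ u \circ v \colon \Delta^r \to X$, with the same structural maps. You never actually form an iterated fibre product $\Delta^m \times_{\Delta^n} (\Delta^n \times_X Y)$, so the comparison you flag never arises. (This is exactly the mechanism behind Lemma~\ref{st:mps L --> scS from LFibs} in the paper.)
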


\begin{remark}
\label{rmk:mps L --> scS}
Let $L$ be a $\rmU$-small simplicial set.
Explicitly, each map of ($\rmV$-small) simplicial sets $F \colon L \to \scS$ is specified as follows:

\begin{myitemize}
\item \textit{The map $F_n \colon L_n \to \scS_n$:}
to each $n$-simplex $a \colon \Delta^n \to L$ of $L$, the map $F$ assigns an $n$-simplex $F_n(a)$ of $\scS$, i.e.~a left fibration $A(a) \to \Delta^n$ together with choices of pullback squares
\begin{equation}
\begin{tikzcd}
	A(a;v) \ar[r] \ar[d]
	& A(a) \ar[d]
	\\
	\Delta^r \ar[r, "v"']
	& \Delta^n
\end{tikzcd}
\end{equation}
for each morphism $v \colon \Delta^r \to \Delta^n$ in $\sSet$.

\item \textit{Compatibility with the simplicial structure maps of $L$ and $\scS$:}
in order for $F$ to define a morphism of $(\rmV$-small) simplicial sets, these data have to satisfy the following consistency condition:
given a morphism $u \colon \Delta^k \to \Delta^n$, we obtain the $k$-simplex $u^*a$ of $L$ as the composition $a \circ u \colon \Delta^k \to L$.
We also obtain a $k$-simplex $u^*F(a)$ in $\scS$, and we spell out what it means for these to agree.
The assignment $F$ associates to $u^*a \in L_k$ a left fibration $A(u^*a) \to \Delta^k$ and choices of pullback squares
\begin{equation}
\begin{tikzcd}
	A(u^*a;v) \ar[r] \ar[d]
	& A(u^*a) \ar[d]
	\\
	\Delta^r \ar[r, "v"']
	& \Delta^k
\end{tikzcd}
\end{equation}
for maps $v \colon \Delta^r \to \Delta^k$.
The left fibration $A(u^*a) \to \Delta^k$ has to coincide with the left vertical morphism in the chosen pullback
\begin{equation}
\begin{tikzcd}
	A(a;u) \ar[r] \ar[d]
	& A(a) \ar[d]
	\\
	\Delta^k \ar[r, "v"']
	& \Delta^n
\end{tikzcd}
\end{equation}
for the $n$-simplex $F(a)$, i.e.~$A(u^*a) = A(a;u)$.
Further, the chosen pullbacks of $F(a)$ and $u^*F(a)$ must agree, which we may describe by the equation
\begin{equation}
	A(u^*a;v) = A(a, u \circ v)
\end{equation}
for each map $v \colon \Delta^r \to \Delta^k$.
\qen
\end{myitemize}
\end{remark}

\begin{remark}
\label{rmk:mps L --> scS as cart nat trafos}
An equivalent way to describe a morphism $F \colon L \to \scS$ is as follows:
Consider the comma category $\bbDelta_{/L}$ and the functor $ \Delta^\cdot \circ \pr \colon \bbDelta_{/L} \to \bbDelta \to \sSet$ (where $\pr$ is the canonical projection functor).
The data from Remark~\ref{rmk:mps L --> scS} are equivalent to that of a functor $A \colon \bbDelta_{/L} \to \sSet$, together with a natural transformation $p \colon A \to \Delta^\cdot \circ \pr$ satisfying that
\begin{myenumerate}
\item $p$ is a \textit{cartesian} natural transformation (i.e.~all its naturality squares are cartesian) and

\item $p$ consists of left fibrations.
\qen
\end{myenumerate}
\end{remark}

\begin{lemma}
\label{st:mps L --> scS from LFibs}
Let $A \to L$ be a left fibration of $\rmU$-small simplicial sets.
Suppose we are given choices of pullback squares
\begin{equation}
\begin{tikzcd}
	A(a) \ar[r] \ar[d]
	& A \ar[d]
	\\
	\Delta^n \ar[r, "a"']
	& L
\end{tikzcd}
\end{equation}
for each map $a \colon \Delta^n \to L$.
These data induce a consistent family of left fibrations and choices of pullbacks as in Remark~\ref{rmk:mps L --> scS}, i.e.~the data of a morphism $L \to \scS$.
\end{lemma}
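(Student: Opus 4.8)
The plan is to read off the data required in Remark~\ref{rmk:mps L --> scS} (equivalently, in Remark~\ref{rmk:mps L --> scS as cart nat trafos}) directly from the given pullback squares. For an $n$-simplex $a \colon \Delta^n \to L$, the attached left fibration will be the chosen pullback $A(a) \to \Delta^n$ itself; this is indeed a left fibration because left fibrations are stable under pullback, and it has $\rmU$-small fibres since $A(a)$ is a subobject of $\Delta^n \times A$. For a map $v \colon \Delta^r \to \Delta^n$ I would set $A(a;v) \coloneqq A(a \circ v)$, the chosen pullback of $A \to L$ along the composite $a \circ v \colon \Delta^r \to L$. Since the chosen square over $a \circ v$ commutes and $a \circ v$ factors through $v$, the structure maps $A(a\circ v) \to A$ and $A(a\circ v) \to \Delta^r \xrightarrow{v} \Delta^n$ become equal after composing down to $L$, so the universal property of the pullback $A(a)$ supplies a canonical map $A(a\circ v) \to A(a)$ over $v$; this, together with $A(a;v) = A(a\circ v)$, is the comparison square attached to $(a;v)$.

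Then I would check the remaining points in turn. That each comparison square is cartesian is the pasting law for pullbacks: the chosen square $A(a) \to A$ over $a$ is cartesian, and pasting the square $A(a;v) \to A(a)$ over $v$ onto it recovers precisely the chosen cartesian square $A(a\circ v) \to A$ over $a\circ v$, whence the former square is cartesian. The consistency conditions of Remark~\ref{rmk:mps L --> scS} then become instances of strict associativity and unitality of composition of maps of simplicial sets: $A(a;\id_{\Delta^n}) = A(a)$ with identity comparison map (the identity is a map over $\id_{\Delta^n}$ compatible with all structure maps, hence equals the canonical one by the uniqueness clause in the universal property, which also matches the normalisation built into the definition of $\scS$); $A(u^*a) = A(a\circ u) = A(a;u)$ for $u\colon\Delta^k\to\Delta^n$; and $A(u^*a;v) = A(a\circ u\circ v) = A(a;u\circ v)$ for $v\colon\Delta^r\to\Delta^k$.

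I do not anticipate a genuine obstacle; the only thing requiring care is bookkeeping, namely that the comparison maps $A(a\circ v) \to A(a)$ fit together coherently, so that the collection assembles into a single morphism $L \to \scS$ rather than a mere levelwise family. The tidiest way to handle this is via Remark~\ref{rmk:mps L --> scS as cart nat trafos}: I would define a functor $\widehat{A}\colon \bbDelta_{/L} \to \sSet$ sending an object $a$ to the chosen pullback $A(a)$ and sending a morphism of $\bbDelta_{/L}$, i.e.\ a factorisation $\Delta^r \xrightarrow{v} \Delta^n \xrightarrow{a} L$, to the comparison map $A(a\circ v) \to A(a)$; functoriality is again forced by uniqueness of induced maps into pullbacks. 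Equipping $\widehat{A}$ with the natural transformation to $\Delta^\cdot \circ \pr$ given by the structure maps $A(a) \to \Delta^n$, the pasting-law argument shows this transformation is cartesian, and pullback-stability of left fibrations shows it consists of left fibrations, so by Remark~\ref{rmk:mps L --> scS as cart nat trafos} it is exactly the data of a map $L \to \scS$.
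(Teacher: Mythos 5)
Your proposal is correct and follows essentially the same route as the paper's proof: both set $A(a;v) \coloneqq A(a \circ v)$ and invoke the pasting law for pullbacks to verify cartesianness and consistency. You spell out more of the bookkeeping (the induced comparison maps, the identity normalisation, and the reformulation via Remark~\ref{rmk:mps L --> scS as cart nat trafos}) than the paper does, but the underlying construction is identical.
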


\begin{proof}
We need to provide, for each morphism $v \colon \Delta^r \to \Delta^n$, a choice of pullback square
\begin{equation}
\begin{tikzcd}
	A(a;v) \ar[r] \ar[d]
	& A(a) \ar[d]
	\\
	\Delta^r \ar[r, "v"']
	& \Delta^n
\end{tikzcd}
\end{equation}
However, we can augment any such diagram to a double square
\begin{equation}
\begin{tikzcd}[column sep={2.25cm,between origins}]
	A(a;v) \ar[r] \ar[d]
	& A(a) \ar[d] \ar[r]
	& A \ar[d]
	\\
	\Delta^r \ar[r, "v"']
	& \Delta^n \ar[r, "a"']
	& L
\end{tikzcd}
\end{equation}
Note that, by assumption, we have already specified a pullback
\begin{equation}
\begin{tikzcd}
	A(a \circ v) \ar[r] \ar[d]
	& A \ar[d]
	\\
	\Delta^r \ar[r, "a \circ v"']
	& L
\end{tikzcd}
\end{equation}
for the outer square.
Thus, we see that setting $A(a;v) = A(a \circ v)$ for each $v \colon \Delta^r \to \Delta^n$ produces a consistent family as required.
\end{proof}

\begin{theorem}
\label{st:functor NKan -> S}
The functors
\begin{equation}
	\big\{ r_{[n]}^* \colon \Fun([n], \sSet) \to \sSet_{/\Delta^n} \, \big|\, n \in \NN_0 \big\}
\end{equation}
assemble into a functor of $\infty$-categories
\begin{equation}
	\gamma \colon N \Kan \longrightarrow \scS\,.
\end{equation}
\end{theorem}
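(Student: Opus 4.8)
The plan is to produce $\gamma$ by hand as a map of simplicial sets and then observe that, $\scS$ being an $\infty$-category (and $N\Kan$ one as well), such a map is automatically a functor of $\infty$-categories. By Remark~\ref{rmk:mps L --> scS as cart nat trafos}, giving a morphism $N\Kan \to \scS$ is the same as giving a functor $A \colon \bbDelta_{/N\Kan} \to \sSet$ together with a cartesian natural transformation $p \colon A \Rightarrow \Delta^\cdot \circ \pr$ all of whose components are left fibrations. I would first unwind the indexing category: an object of $\bbDelta_{/N\Kan}$ is a pair $([n],a)$ with $a$ an $n$-simplex of $N\Kan$, i.e.\ a functor $F_a \colon [n] \to \Kan$, and a morphism $([m],b) \to ([n],a)$ is a map $u \colon [m] \to [n]$ in $\bbDelta$ with $a \circ u = b$, so that $F_b = u^* F_a$.

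Next I would set $A([n],a) \coloneqq r_{[n]}^*(F_a)$, viewing $F_a$ as a diagram valued in $\sSet$ through $\Kan \hookrightarrow \sSet$, with structure map $p_{([n],a)} \colon r_{[n]}^*(F_a) \to N[n] = \Delta^n$; on a morphism $u \colon ([m],b) \to ([n],a)$ I would take the canonical map $r_{[m]}^*(u^*F_a) \to r_{[n]}^*(F_a)$ lying over $u \colon \Delta^m \to \Delta^n$ furnished by Lemma~\ref{st:r^* and change of index category}(1). Functoriality of $A$ is then immediate from the description of this map in the proof of Lemma~\ref{st:r^* and change of index category}: it sends a simplex $(\alpha,\eta)$ of $r_{[m]}^*(u^*F_a)$ to $(u_*\alpha,\eta)$, and $\alpha \mapsto u_*\alpha$ is strictly functorial in $u$; in particular $A(\id) = \id$.

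It then remains to check the two conditions on $p$. Cartesianness of each naturality square of $p$ is exactly the pullback square~\eqref{eq:r^*_[-] yields pb squares} of Lemma~\ref{st:r^* and change of index category}(1). That each $p_{([n],a)} \colon r_{[n]}^*(F_a) \to \Delta^n$ is a left fibration with $\rmU$-small fibres follows from Theorem~\ref{st:r_! r^* QEq}: the adjunction $r_{[n]!} \dashv r_{[n]}^*$ is a Quillen adjunction from the covariant model structure on $\sSet_{/\Delta^n}$ to the projective model structure on $\Fun([n],\sSet)$, so $r_{[n]}^*$ preserves fibrant objects; a diagram $[n] \to \sSet$ with values in $\Kan$ is objectwise a Kan complex, hence projectively fibrant (Theorem~\ref{st:proj/inj MoStr on sSet^C}), so $r_{[n]}^*(F_a)$ is fibrant in the covariant model structure, i.e.\ $r_{[n]}^*(F_a) \to \Delta^n$ is a left fibration (Theorem~\ref{st:covariant MoStr}); and by the explicit formula~\eqref{eq:r_C^* explicit} the simplicial set $r_{[n]}^*(F_a)$ is $\rmU$-small since $\bbDelta$ is $\rmU$-small and $F_a$ has $\rmU$-small values, so its fibres are $\rmU$-small too. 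Feeding $(A,p)$ into Remark~\ref{rmk:mps L --> scS as cart nat trafos} yields $\gamma \colon N\Kan \to \scS$; tracing the construction back through Remark~\ref{rmk:mps L --> scS}, the underlying left fibration of the $n$-simplex $\gamma(a)$ of $\scS$ is $r_{[n]}^*(F_a) \to \Delta^n$, with the pullback chosen along $u \colon \Delta^m \to \Delta^n$ being the square~\eqref{eq:r^*_[-] yields pb squares} (the identity square for $u = \id$), so $\gamma$ genuinely assembles the functors $r_{[n]}^*$.

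I do not expect a genuine obstacle here; the only real work is the bookkeeping that matches the coherence data defining an $n$-simplex of $\scS$ in Definition~\ref{def:oo-Cat of spaces} (compatible choices of pullbacks, identity sent to identity) against the canonical pullback squares and strict functoriality delivered by Lemma~\ref{st:r^* and change of index category}. A point worth making explicit is that every map $\Delta^m \to \Delta^n$ of simplicial sets comes from a unique morphism $[m] \to [n]$ in $\bbDelta$ (Yoneda), so Lemma~\ref{st:r^* and change of index category} really does supply all the pullbacks required by Definition~\ref{def:oo-Cat of spaces}. As an alternative packaging, one could instead apply Lemma~\ref{st:mps L --> scS from LFibs} directly to the single left fibration $r_\Kan^*(\iota) \to N\Kan$ --- regarding $\Kan$ as a $\rmV$-small category, $\iota \colon \Kan \hookrightarrow \rmV\sSet$ the inclusion, and $r_\Kan^*$ as valued in $\rmV$-small simplicial sets over $N\Kan$ --- with pullbacks chosen via Lemma~\ref{st:r^* and change of index category}; one then checks the two constructions of $\gamma$ agree.
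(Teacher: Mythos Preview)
Your proof is correct and follows essentially the same approach as the paper: both construct $\gamma$ by assigning to an $n$-simplex $X \colon [n] \to \Kan$ the left fibration $r_{[n]}^*X \to \Delta^n$, with the required pullback choices supplied by Lemma~\ref{st:r^* and change of index category}. The only difference is packaging: the paper phrases this as directly building each $n$-simplex of $\scS$ (invoking Lemma~\ref{st:mps L --> scS from LFibs} for the consistency of the pullback data), whereas you route through the cartesian-natural-transformation description of Remark~\ref{rmk:mps L --> scS as cart nat trafos}, which has the minor advantage of making compatibility with the simplicial structure maps automatic rather than implicit. You are also more explicit than the paper about why $r_{[n]}^*X \to \Delta^n$ is a left fibration and why the fibres are $\rmU$-small; the paper takes these for granted.
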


\begin{proof}
The functor $\gamma$ sends an $n$-simplex $X \colon [n] \to \Kan$ of $N \Kan$ to the following $n$-simplex of $\scS$:
its underlying left fibration is given by \smash{$r_{[n]}^*X \to \Delta^n$}.
Given any functor $u \colon [k] \to [n]$, we associate to this the square
\begin{equation}
\begin{tikzcd}
	r_{[k]}^*(u^*X) \ar[r] \ar[d] & r_{[n]}^*X \ar[d]
	\\
	\Delta^k \ar[r, "u"'] & \Delta^n
\end{tikzcd}
\end{equation}
which is cartesian by Lemma~\ref{st:r^* and change of index category}.
(recall that for a functor $u \colon [k] \to [n]$ we denote the map $\Delta^k \to \Delta^n$ induced by the Yoneda embedding of $\bbDelta$ again by $u$).
In other words, the left fibrations and choices of pullbacks are generated under Lemma~\ref{st:mps L --> scS from LFibs} from the left fibration \smash{$r_{[k]}^*X \to \Delta^k$} and the choices of pullbacks as in the above diagram.
Thus, these data constitute an $n$-simplex of $\scS$.
\end{proof}

\begin{theorem}
\label{st:LFib classified by gamma o NX}
Let $\scC$ be a $\rmU$-small category, and let $X \colon \scC \to \Kan$ be a functor.
The diagram
\begin{equation}
\begin{tikzcd}
	r_\scC^*X \ar[rr] \ar[d]
	& & \scS_* \ar[d, "p_\univ"]
	\\
	N \scC \ar[r, "NX"']
	& N \Kan \ar[r, "\gamma"']
	& \scS
\end{tikzcd}
\end{equation}
is cartesian (in $\rmV\sSet$).
That is, the the functor $NX \circ \gamma \colon N\scC \to \scS$ classifies the left fibration $r_\scC^*X \to N\scC$.
\end{theorem}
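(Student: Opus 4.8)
The plan is to verify the cartesianness fibrewise over simplices of $N\scC$, using the description of maps into $\scS$ and $\scS_*$ from Lemma~\ref{st:maps to scS} together with the naturality of $r^*$ in the index category from Lemma~\ref{st:r^* and change of index category}. By Lemma~\ref{st:maps to scS}, the composite $\gamma \circ NX \colon N\scC \to \scS$ classifies a left fibration over $N\scC$, and to identify this left fibration it suffices to exhibit, for each $n$-simplex $\alpha \colon [n] \to \scC$ of $N\scC$ (equivalently each map $\Delta^n \to N\scC$ factoring as $N\alpha$), a cartesian square whose bottom edge is $\alpha$ and whose right edge is (the left fibration classified by) $\gamma \circ NX$. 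First I would unwind the definitions: by construction in Theorem~\ref{st:functor NKan -> S}, the $n$-simplex $\gamma(NX(\alpha)) = \gamma((X \circ \alpha) \colon [n] \to \Kan)$ of $\scS$ has underlying left fibration $r_{[n]}^*(\alpha^*X) \to \Delta^n$, where $\alpha^*X \colon [n] \to \Kan$ is the restriction of $X$ along $\alpha$.

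The key step is then Lemma~\ref{st:r^* and change of index category}(1) applied to $\psi = \alpha \colon [n] \to \scC$, which gives a pullback square
\begin{equation}
\begin{tikzcd}
	r_{[n]}^*(\alpha^*X) \ar[r] \ar[d] & r_\scC^*X \ar[d]
	\\
	\Delta^n \ar[r, "N\alpha"'] & N\scC
\end{tikzcd}
\end{equation}
in $\sSet$. This exhibits the left fibration classified by $\gamma \circ NX$ — whose fibre over $\Delta^n \xrightarrow{N\alpha} N\scC$ is precisely $r_{[n]}^*(\alpha^*X) \to \Delta^n$ by the previous paragraph — as pulled back from $r_\scC^* X \to N\scC$. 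Since a left fibration over $N\scC$ is determined up to canonical isomorphism by its pullbacks along all simplices of $N\scC$ (this is the content of Lemma~\ref{st:maps to scS} and Lemma~\ref{st:mps L --> scS from LFibs}), and since by Corollary~\ref{st:weakly constant diags and KanFibs} — or more simply since $r_\scC^*$ of a Kan-complex-valued diagram already lands among left fibrations by Theorem~\ref{st:r_! r^* QEq} (its right adjoint $r_\scC^*$ is right Quillen into the covariant model structure, whose fibrant objects are the left fibrations, and the diagram $X$ is projectively fibrant) — the left fibration $r_\scC^*X \to N\scC$ is itself the one classified by $\gamma \circ NX$. The remaining bookkeeping is to check that the chosen pullback squares assembled into the $n$-simplices of $\scS$ in Theorem~\ref{st:functor NKan -> S} are exactly the ones generated, via Lemma~\ref{st:mps L --> scS from LFibs}, from the chosen pullbacks of $r_\scC^*X \to N\scC$ along the simplices of $N\scC$; this follows by the pasting law for pullbacks together with Lemma~\ref{st:r^* and change of index category}(2), which says the chosen pullback over $\alpha \circ v$ agrees with iterating the pullbacks over $\alpha$ and then $v$.

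I expect the main obstacle to be purely organisational rather than mathematical: one must be careful that the \emph{choices} of pullback squares — which are genuine data in Cisinski's model of $\scS$, not merely properties — match on the nose, not just up to isomorphism. Concretely, the $n$-simplex $\gamma(NX(\alpha))$ carries, for each $v \colon \Delta^k \to \Delta^n$, the chosen square with top-left corner $r_{[k]}^*(v^*\alpha^*X) = r_{[k]}^*((\alpha v)^*X)$, while the left fibration $r_\scC^*X \to N\scC$ viewed as a map $N\scC \to \scS$ carries over the simplex $\Delta^k \xrightarrow{N(\alpha v)} N\scC$ the chosen square with top-left corner $r_{[k]}^*((\alpha v)^*X)$; these agree by the strict naturality encoded in Lemma~\ref{st:r^* and change of index category}, and the consistency condition ``$A(u^*a; w) = A(a, u\circ w)$'' of Remark~\ref{rmk:mps L --> scS} is precisely Lemma~\ref{st:r^* and change of index category}(2) combined with the pasting law. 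Once this compatibility of choices is nailed down, the cartesianness of the square in the statement is immediate from Lemma~\ref{st:maps to scS}, and the final sentence — that $\gamma \circ NX$ classifies $r_\scC^*X \to N\scC$ — is just the definition of ``classifies''.
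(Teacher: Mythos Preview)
Your proposal is correct and follows essentially the same approach as the paper: both arguments hinge on Lemma~\ref{st:r^* and change of index category} to identify the pullback of $r_\scC^*X$ along each simplex $N\alpha$ with the left fibration $r_{[n]}^*(\alpha^*X) \to \Delta^n$ defining $\gamma(NX(\alpha))$. The paper organises this as an explicit bijection between $n$-simplices of $r_\scC^*X$ and pairs $(\alpha, s)$ with $s$ a section of $r_{[n]}^*(\alpha^*X) \to \Delta^n$ (i.e.\ $n$-simplices of $N\scC \times_\scS \scS_*$), while you phrase it via the characterisation of Lemma~\ref{st:maps to scS} and the choice-matching of Lemma~\ref{st:mps L --> scS from LFibs}; these are two packagings of the same computation.
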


This provides a simplified analogue of~\cite[3.2.5.21]{Lurie:HTT}.

\begin{proof}
We have to show that there is an isomorphism of ($\rmV$-small) simplicial sets
\begin{equation}
	r_\scC^*X \cong N \scC \underset{\scS}{\times} \scS_*\,.
\end{equation}
An $n$-simplex in $N \scC \times_\scS \scS_*$ consists of the following data:
first, we have an $n$-simplex of $N_n \scC$, or equivalently a functor $\alpha \colon [n] \to \scC$.
The image of $\alpha$ in $\scS$ under the functor $\gamma \circ NX$ is, by construction of $\gamma$, the left fibration \smash{$q_\alpha \colon r_{[n]}^*(\alpha^*X) \longrightarrow \Delta^n$}, together with the choices of pullbacks induced by Lemma~\ref{st:r^* and change of index category}.
An $n$-simplex in $N \scC \times_\scS \scS_*$ then consists of a pair $(\alpha, s)$, where $s$ is a section of $q_\alpha$.
By the above and Lemma~\ref{st:r^* and change of index category} we have a commutative diagram
\begin{equation}
\begin{tikzcd}
	\Delta^n \ar[ddr, bend right=25, "1_{\Delta^n}" description] \ar[rrd, dashed, bend left=25, "a" description] \ar[rd, "s" description]
	& 
	& 
	\\
	& r_{[n]}^* (\alpha^*X) \ar[r] \ar[d, "q_\alpha"]
	&r_\scC^*X \ar[d, "q"]
	\\
	& \Delta^n \ar[r, "N \alpha"']
	& N\scC
\end{tikzcd}
\end{equation}
in $\sSet$.
The top morphism now defines an $n$-simplex in $r_\scC^*X$ from the pair $(\alpha,s)$.
This defines a map on $n$-simplices
\begin{equation}
	\big( (N\scC) \times_\scS \scS_* \big)_n \longrightarrow \big( r_\scC^*X \big)_n\,.
\end{equation}

In the opposite direction, denote the left fibration \smash{$r_\scC^*X \to N\scC$} by $q$.
Given an $n$-simplex \smash{$a \colon \Delta^n \to r_\scC^*X$}, set $\alpha' \coloneqq q \circ a$.
This is the nerve of a \textit{unique} functor $\alpha \colon [n] \to \scC$ (since the nerve functor $N \colon \Cat \to \sSet$ is fully faithful as a functor of 1-categories).
Consider the diagram
\begin{equation}
\begin{tikzcd}
	\Delta^n \ar[ddr, bend right=25, "1_{\Delta^n}" description] \ar[rrd, bend left=25, "a" description] \ar[rd, dashed, "s" description]
	& 
	& 
	\\
	& r_{[n]}^* (\alpha^*X) \ar[r] \ar[d, "q_\alpha"]
	&r_\scC^*X \ar[d, "q"]
	\\
	& \Delta^n \ar[r, "N \alpha"']
	& N\scC
\end{tikzcd}
\end{equation}
By Lemma~\ref{st:r^* and change of index category} the square in this diagram is cartesian, so that there is a \textit{unique} morphism $s$ fitting into this diagram as the dashed morphism.
We thus obtain a map
\begin{equation}
	\big( r_\scC^*X \big)_n \longrightarrow \big( (N\scC) \times_\scS \scS_* \big)_n\,.
\end{equation}
The universal property of pullbacks ensures that this is the inverse to the above map on $n$-simplices.
Finally, it follows from the pasting law for pullbacks and Lemma~\ref{st:r^* and change of index category} that these maps on $n$-simplices form maps of simplicial sets.
\end{proof}

Next, we enhance Theorem~\ref{st:functor NKan -> S} from providing a functor between $N\Kan$ and the $\infty$-category of spaces $\scS$ so that it provides functors between diagrams in both $\infty$-categories.

\begin{remark}
\label{rmk:splcs in Fun(L, scS)}
Let $L$ be a $\rmU$-small simplicial set.
We can describe the simplicial set $\scFun(L, \scS)$ explicitly as follows:
an $n$-simplex in $\scFun(L, \scS)$ is a morphism $F \colon \Delta^n {\times} L \to \scS$.
Such a morphism corresponds to specifying, for each morphism $a \colon \Delta^k \to \Delta^n {\times} L$, a left fibration $A(a) \to \Delta^k$ with further choices of pullbacks $A(a;v)$ for each morphism $v \colon \Delta^r \to \Delta^k$ (using the notation of Remark~\ref{rmk:mps L --> scS}), satisfying the consistency condition $A(a;v) = A(a \circ v)$ from Remark~\ref{rmk:mps L --> scS}.
We denote the data corresponding to the $n$-simplex $F$ by $\{A\}$.

Next, we describe the action of the simplicial structure maps of $\scFun(L, \scS)$.
Consider a morphism $u \colon \Delta^l \to \Delta^n$; this defines a simplicial structure map $(u {\times} 1_L)^* \colon \scFun(L, \scS)_n \to \scFun(L, \scS)_l$.
It acts by precomposing morphisms $F \colon \Delta^n {\times} L \to \scS$ with the morphism $(u {\times} 1_L) \colon \Delta^l {\times} L \to \Delta^n {\times} L$.
Given the above $n$-simplex $F$, specified by the consistent collection of left fibrations and chosen pullbacks $\{A\}$, we describe its image under $(u {\times} 1_L)^*$ explicitly:
for a morphism $b \colon \Delta^k \to \Delta^l {\times} L$, the associated left fibration of the new $l$-simplex is given by $A'(b) = A((u {\times} 1_L) \circ b) \to \Delta^k$, with the necessary choices of pullback squares induced in the same manner.
This gives a new consistent family of left fibrations and pullback squares $\{A'\}$ which corresponds to the morphism $(u {\times} 1_L)^*F \colon \Delta^l {\times} L \to \scS$.
\qen
\end{remark}

We have the following enhancement of Theorem~\ref{st:functor NKan -> S}:

\begin{theorem}
\label{st:functor NFun(C, Kan) -> Fun(NC, S)}
Let $\scC$ be a small category.
The functors
\begin{equation}
	\big\{ r_{[n] {\times} \scC}^* \colon \Fun \big( [n] {\times} \scC, \sSet \big) \longrightarrow \sSet_{/\Delta^n \times N\scC} \, \big|\, n \in \NN_0 \big\}
\end{equation}
assemble into a functor between $\infty$-categories
\begin{equation}
	\gamma_\scC \colon N \Fun(\scC, \Kan) \longrightarrow \scFun(N\scC, \scS)\,.
\end{equation}

\end{theorem}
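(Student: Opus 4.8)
The plan is to upgrade Theorem~\ref{st:functor NKan -> S} (the case $\scC = \pt$) by feeding in the index category $\scC$ as a ``parameter'' and using the already-established naturality of $r^*_{(-)}$ in the index category (Lemma~\ref{st:r^* and change of index category}). Concretely, an $n$-simplex of $N\Fun(\scC, \Kan)$ is a functor $X \colon [n] \to \Fun(\scC, \Kan)$, equivalently a functor $\widetilde X \colon [n] {\times} \scC \to \Kan$; and by Remark~\ref{rmk:splcs in Fun(L, scS)} an $n$-simplex of $\scFun(N\scC, \scS)$ is a map $\Delta^n {\times} N\scC \to \scS$. So I first define $\gamma_\scC$ on $n$-simplices: send $X$ to the map $\Delta^n {\times} N\scC \to \scS$ which, under Lemma~\ref{st:mps L --> scS from LFibs}, is generated by the left fibration $r^*_{[n]{\times}\scC}(\widetilde X) \to \Delta^n {\times} N\scC$ together with chosen pullbacks. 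Here I use that $\Delta^n {\times} N\scC = N([n] {\times} \scC)$, so the rectification functor $r^*_{[n]{\times}\scC}$ indeed lands in $\sSet_{/\Delta^n \times N\scC}$.

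Next I would verify that this assignment is simplicial, i.e.\ commutes with the face and degeneracy maps. On the source side, a map $u \colon [l] \to [n]$ acts on $N\Fun(\scC,\Kan)$ by precomposition, sending $X$ to $X \circ u$, equivalently $\widetilde X \circ (u {\times} 1_\scC)$. On the target side, by Remark~\ref{rmk:splcs in Fun(L, scS)}, the structure map $(u {\times} 1_{N\scC})^*$ acts by precomposing the classifying map $\Delta^n {\times} N\scC \to \scS$ with $u {\times} 1_{N\scC}$. The required compatibility is then exactly the content of Lemma~\ref{st:r^* and change of index category}(1) applied to the functor $\psi = u {\times} 1_\scC \colon [l] {\times} \scC \to [n] {\times} \scC$: it gives a canonical pullback square
\begin{equation}
\begin{tikzcd}
	r^*_{[l]{\times}\scC}\big((u {\times} 1_\scC)^* \widetilde X\big) \ar[r] \ar[d]
	& r^*_{[n]{\times}\scC}(\widetilde X) \ar[d]
	\\
	\Delta^l {\times} N\scC \ar[r, "u \times 1"']
	& \Delta^n {\times} N\scC
\end{tikzcd}
\end{equation}
and since the data of an $n$-simplex of $\scS$ (hence of $\scFun(N\scC, \scS)$) are, via Lemma~\ref{st:mps L --> scS from LFibs}, \emph{generated} from a left fibration by choosing pullbacks, and those pullbacks are pinned down by the defining property $A(a;v) = A(a \circ v)$, this square shows $\gamma_\scC(X \circ u) = (u {\times} 1_{N\scC})^* \gamma_\scC(X)$. (One also checks the normalisation: the pullback chosen for the identity map is the identity, which holds because $r^*_{(-)}$ sends the identity functor to the identity.)

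Finally I would note functoriality in $\scC$ is not needed for the statement itself but that the construction is natural; and I would observe that $\gamma_\pt = \gamma$ recovers Theorem~\ref{st:functor NKan -> S}, while the fibrewise description (each $n$-simplex of $\scFun(N\scC,\scS)$ restricted along $N\scC \hookrightarrow \Delta^n {\times} N\scC$ recovers, via Theorem~\ref{st:LFib classified by gamma o NX}, the expected classifying map) reconciles this with the pointwise picture. The main obstacle is purely bookkeeping: one must carefully track, through the somewhat heavy notation of Remark~\ref{rmk:mps L --> scS} and Remark~\ref{rmk:splcs in Fun(L, scS)}, that the ``consistent family of left fibrations and chosen pullbacks'' produced by $r^*_{[n]{\times}\scC}$ via Lemma~\ref{st:mps L --> scS from LFibs} is exactly the one whose structure maps match those of $\scFun(N\scC,\scS)$; all the genuine mathematical content is already packaged in Lemma~\ref{st:r^* and change of index category} and the pasting law for pullbacks. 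In particular there is no homotopy-theoretic input at this stage — this is a statement about strict simplicial sets — so no model-categorical subtlety arises here.
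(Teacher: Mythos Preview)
Your proposal is correct and follows essentially the same approach as the paper: both identify an $n$-simplex of $N\Fun(\scC,\Kan)$ with a functor $[n]\times\scC \to \Kan$, apply $r^*_{[n]\times\scC}$ to obtain a left fibration over $\Delta^n\times N\scC$, generate the classifying map via Lemma~\ref{st:mps L --> scS from LFibs} using the chosen pullbacks supplied by Lemma~\ref{st:r^* and change of index category}, and then verify simplicial compatibility by applying Lemma~\ref{st:r^* and change of index category} to $\psi = u\times 1_\scC$. The paper spells out the compatibility check slightly more explicitly (writing out the pullback squares for both $(u\times 1_\scC)^*X^\dashv$ and for the action of $(u\times 1_{N\scC})^*$ on the target side via Remark~\ref{rmk:splcs in Fun(L, scS)}), but your observation that this is pure bookkeeping with no homotopy-theoretic input is exactly right.
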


\begin{proof}
There are canonical isomorphisms
\begin{equation}
	N_n \Fun(\scC, \Kan) \cong \Fun \big( [n] {\times} \scC, \Kan \big)
	\qandq
	\big( \scFun(N\scC, \scS) \big)_n \cong \rmV\sSet(\Delta^n {\times} N\scC, \scS)\,.
\end{equation}
Let $X \colon [n] \to \Fun(\scC, \Kan)$ be an $n$-simplex in $N \Fun(\scC,\Kan)$.
Equivalently, we can write these data as a functor $X^\dashv \colon [n] {\times} \scC \to \sSet$.
We produce from this a consistent family $\{A\}$ of left fibrations and choices of pullbacks as in Remark~\ref{rmk:splcs in Fun(L, scS)} (recall that these families correspond to $n$-simplices in $\scFun(N\scC, \scS)$).
This will be aided by Lemma~\ref{st:mps L --> scS from LFibs}:
first, consider the left fibration \smash{$r_{[n] {\times} \scC}^* X^\dashv \longrightarrow \Delta^n {\times} N\scC$}.
Let $\alpha \colon [k] \to [n] {\times} \scC$ be any functor.
We assign to this the square
\begin{equation}
\begin{tikzcd}
	A(a) = r_{[k]}^*(\alpha^*X^\dashv) \ar[r] \ar[d]
	& r_{[n] {\times} \scC}^* X^\dashv = A \ar[d]
	\\
	\Delta^k \ar[r, "N\alpha"']
	& \Delta^n {\times} N\scC
\end{tikzcd}
\end{equation}
This is a pullback square by Lemma~\ref{st:r^* and change of index category}.
From the left fibration \smash{$r_{[n] {\times} \scC}^* X^\dashv \longrightarrow \Delta^n {\times} N\scC$} and these choices of pullback squares, Lemma~\ref{st:mps L --> scS from LFibs} produces a consistent family $\{A\}$ as required, i.e.~a morphism $\Delta^n {\times} N\scC \to \scS$ and thus, equivalently, an $n$-simplex in $\scFun(N\scC, \scS)$.

It remains to show that this assignment $X \longmapsto \{A\}$ defines a morphism $N\Fun(\scC, \Kan) \longrightarrow \scFun(N\scC, \scS)$ of simplicial sets; that is, we need to show that it is compatible with the simplicial structure maps.
Let $u \colon [l] \to [n]$ be a morphism in $\bbDelta$.
On the domain side, the structure map associated to $u$ sends an $n$-simplex $X^\dashv \colon [n] {\times} \scC \to \Kan$ to the $l$-simplex $(u {\times} 1_\scC)^*X^\dashv \colon [l] {\times} \scC \to \Kan$.
The coherent family associated to this by the above construction arises (via Lemma~\ref{st:mps L --> scS from LFibs}) from the left fibration
\begin{equation}
	r_{[l] {\times} \scC}^* \big( (u {\times} 1_\scC)^* X^\dashv \big)
	\longrightarrow \Delta^l {\times} N\scC\,,
\end{equation}
together with the choices of pullbacks
\begin{equation}
\begin{tikzcd}[column sep=1.5cm]
	r_{[k]}^*(\beta^* (u {\times} 1_\scC)^* X^\dashv) \ar[r] \ar[d]
	& r_{[l] {\times} \scC}^* \big( (u {\times} 1_\scC)^* X^\dashv \big) \ar[d]
	\\
	\Delta^k \ar[r, "N \beta"']
	& \Delta^l {\times} N\scC
\end{tikzcd}
\end{equation}
for each functor $\beta \colon [k] \to [l] {\times} \scC$.
Invoking the explicit formulas for the action of the simplicial structure maps in $\scFun(N\scC, \scS)$ from Remark~\ref{rmk:splcs in Fun(L, scS)}, we see that this agrees with the result of applying the structure map associated to $u$ on the codomain side to the consistent family $\{A\}$ constructed above.
\end{proof}

\begin{theorem}
\label{st:gamma_C = gamma o N o dashv}
Let $\scC$ be a small category, and let $X \colon \scC \to \Kan$ be a functor.
There is a strictly commutative diagram of categories and functors
\begin{equation}
\begin{tikzcd}[column sep=1cm, row sep=0.75cm]
	N\Fun(\scC, \Kan) \ar[d, "\gamma_\scC"'] \ar[r, "\cong"]
	& \scFun(N\scC, N\Kan) \ar[d, "\gamma \circ (-)"]
	\\
	\scFun(N\scC, \scS) \ar[r, equal]
	& \scFun(N\scC, \scS)
\end{tikzcd}
\end{equation}
\end{theorem}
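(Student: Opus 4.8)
The plan is to verify that the two composites $N\Fun(\scC, \Kan) \to \scFun(N\scC, \scS)$ in the square agree on $n$-simplices for every $n \in \NN_0$. Since $\gamma_\scC$ is a morphism of simplicial sets by Theorem~\ref{st:functor NFun(C, Kan) -> Fun(NC, S)}, and the other composite is the morphism of simplicial sets obtained from the canonical isomorphism $N\Fun(\scC, \Kan) \cong \scFun(N\scC, N\Kan)$ followed by postcomposition with the morphism $\gamma$ of Theorem~\ref{st:functor NKan -> S}, agreement on all $n$-simplices is exactly the asserted strict commutativity. An $n$-simplex of $N\Fun(\scC, \Kan)$ is a functor $X \colon [n] \to \Fun(\scC, \Kan)$, equivalently a functor $X^\dashv \colon [n] {\times} \scC \to \Kan$; under $\Delta^n {\times} N\scC \cong N([n] {\times} \scC)$ and the fact that a morphism into the nerve of a $1$-category is the nerve of a unique functor, the image of $X$ in $\scFun(N\scC, N\Kan)_n = \rmV\sSet(\Delta^n {\times} N\scC, N\Kan)$ is the nerve $N(X^\dashv)$. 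So it remains to show that the $n$-simplex $\gamma_\scC(X)$ of $\scFun(N\scC,\scS)$ and the composite $\gamma \circ N(X^\dashv) \colon \Delta^n {\times} N\scC \to \scS$ coincide on the nose.

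For this I would unwind both sides using Remark~\ref{rmk:mps L --> scS} and Lemma~\ref{st:mps L --> scS from LFibs}. By construction in the proof of Theorem~\ref{st:functor NFun(C, Kan) -> Fun(NC, S)}, $\gamma_\scC(X)$ (viewed via Remark~\ref{rmk:splcs in Fun(L, scS)} as a morphism $\Delta^n {\times} N\scC \to \scS$) is the morphism produced by Lemma~\ref{st:mps L --> scS from LFibs} from the left fibration $r_{[n] {\times} \scC}^* X^\dashv \to \Delta^n {\times} N\scC$ together with the choice, for each functor $\alpha \colon [k] \to [n] {\times} \scC$, of the pullback square $r_{[k]}^*(\alpha^* X^\dashv) \to r_{[n] {\times} \scC}^* X^\dashv$ supplied by Lemma~\ref{st:r^* and change of index category}. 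Spelling out Lemma~\ref{st:mps L --> scS from LFibs} and Remark~\ref{rmk:mps L --> scS}, this means $\gamma_\scC(X)$ assigns to such an $\alpha$ the $k$-simplex of $\scS$ whose underlying left fibration is $r_{[k]}^*(\alpha^* X^\dashv) \to \Delta^k$ and whose chosen pullback along $v \colon [j] \to [k]$ is $r_{[j]}^*((\alpha \circ v)^* X^\dashv) \to r_{[k]}^*(\alpha^* X^\dashv)$. On the other hand, by the definition of $\gamma$ in the proof of Theorem~\ref{st:functor NKan -> S}, the composite $\gamma \circ N(X^\dashv)$ sends $\alpha$ first to $X^\dashv \circ \alpha = \alpha^* X^\dashv \colon [k] \to \Kan$ and then to the $k$-simplex of $\scS$ with underlying left fibration $r_{[k]}^*(\alpha^* X^\dashv) \to \Delta^k$ and chosen pullback along $v \colon [j] \to [k]$ equal to $r_{[j]}^*(v^*(\alpha^* X^\dashv)) \to r_{[k]}^*(\alpha^* X^\dashv)$. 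Since $v^*(\alpha^* X^\dashv) = (\alpha \circ v)^* X^\dashv$, these data are verbatim the same, so the two morphisms $\Delta^n {\times} N\scC \to \scS$ are literally equal; running over all $n$ gives the strict commutativity.

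The only delicate point I anticipate is the bookkeeping ensuring that the \emph{choices of pullback squares}, not merely the underlying left fibrations, match on both sides; this is precisely why one works with the explicit generation procedure of Lemma~\ref{st:mps L --> scS from LFibs} (using the pasting law together with the strict identity $v^*(\alpha^* F) = (\alpha \circ v)^* F$) rather than invoking Theorem~\ref{st:LFib classified by gamma o NX}, which only produces a cartesian square and hence equality of the classifying maps up to canonical isomorphism rather than on the nose. It is the strictness built into Definition~\ref{def:oo-Cat of spaces}---fixed choices of pullbacks, with the identity pullback over the identity---together with the strict functoriality in Lemma~\ref{st:r^* and change of index category}, that makes the square commute strictly rather than only up to a natural equivalence.
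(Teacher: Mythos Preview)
Your proposal is correct and follows essentially the same route as the paper's own proof: both unwind $\gamma_\scC(X)$ and $\gamma \circ N(X^\dashv)$ on a $k$-simplex $\alpha \colon [k] \to [n] \times \scC$ of $\Delta^n \times N\scC$, identify each with the left fibration $r_{[k]}^*(\alpha^* X^\dashv) \to \Delta^k$ together with the pullback choices coming from Lemma~\ref{st:r^* and change of index category}, and conclude literal equality. Your final paragraph, emphasising that it is the \emph{choices} of pullback squares (not just the underlying fibrations) that must match and that this is guaranteed by the strict identity $v^*(\alpha^* F) = (\alpha \circ v)^* F$, is a helpful clarification that the paper leaves implicit.
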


\begin{proof}
Let $X \colon \Delta^n \to N\Fun(\scC, \Kan)$ be an $n$-simplex of $N\Fun(\scC, \Kan)$.
Under the tensor-hom adjunction, this corresponds to a unique functor $X^\dashv \colon [n] {\times} \scC \to \Kan$.
The image of this $n$-simplex under the functor $\gamma_\scC$ is the $n$-simplex
\begin{equation}
	\gamma_\scC(X) \in \scFun(N\scC, \scS)_n
	\cong \rmV\sSet(\Delta^n {\times} N\scC, \scS)
\end{equation}
which is induced via Lemma~\ref{st:mps L --> scS from LFibs} from the left fibration \smash{$r_{[n] \times \scC}^*X^\dashv \longrightarrow \Delta^n {\times} N\scC$} and the choices of pullbacks \smash{$A(N\alpha) = r_{[k]}^*(\alpha^*X^\dashv) \to \Delta^k$}, for functors $\alpha \colon [k] \to [n] {\times} \scC$ (in the notation of Remark~\ref{rmk:mps L --> scS}; see also the proof of Theorem~\ref{st:functor NFun(C, Kan) -> Fun(NC, S)}).
More explicitly, $\gamma_\scC (X)$ is a morphism of simplicial sets $\Delta^n {\times} N\scC \to \scS$; it sends a $k$-simplex $\alpha \colon [k] \to [n] {\times} \scC$ to the $k$-simplex of $\scS$ which is given by the left fibration \smash{$r_{[k]}^*(\alpha^*X^\dashv) \longrightarrow \Delta^k$} together with the choices of pullbacks induced by Lemma~\ref{st:r^* and change of index category}.

We compare this to the $n$-simplex of $\scFun(N\scC, \scS)$ corresponding to the map
\begin{equation}
\begin{tikzcd}
	\Delta^n \times N\scC \cong N([n] \times \scC) \ar[r, "N X^\dashv"]
	& N\Kan \ar[r, "\gamma"]
	& \scS\,.
\end{tikzcd}
\end{equation}
Using Remark~\ref{rmk:splcs in Fun(L, scS)} and the construction of the functor $\gamma$ in (the proof of) Theorem~\ref{st:functor NKan -> S}, we can describe the action of this second map on $k$-simplices explicitly:
consider a functor $\alpha \colon [k] \to [n] {\times} \scC$.
We obtain a $k$-simplex
\begin{equation}
	X^\dashv \circ \alpha = \alpha^* X^\dashv \colon[k] \to \Kan
\end{equation}
of $N \Kan$.
By construction, the functor $\gamma$ sends this $k$-simplex to the left fibration \smash{$r_{[k]}^* (\alpha^*X^\dashv) \longrightarrow \Delta^k$}, together with the choices of pullbacks induced by Lemma~\ref{st:r^* and change of index category}.
Therefore, we see that the morphisms $\gamma_\scC(X) \colon \Delta^n {\times} N\scC \to \scS$ and $\gamma \circ N X^\dashv \colon \Delta^n {\times} N\scC \to \scS$ agree.
\end{proof}

\begin{corollary}
\label{st:LFib classified by gamma_C(X)}
The left fibration classified by $\gamma_\scC X \colon N\scC \to \scS$ is given by $r_\scC^*X \to N\scC$.
\end{corollary}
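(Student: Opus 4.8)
The plan is to deduce this immediately by combining the two main results already established. The key observation is that Corollary~\ref{st:LFib classified by gamma_C(X)} is essentially a restatement of Theorem~\ref{st:LFib classified by gamma o NX} once we know that $\gamma_\scC X$ and $\gamma \circ NX$ agree as morphisms $N\scC \to \scS$.

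First I would invoke Theorem~\ref{st:gamma_C = gamma o N o dashv}: applied to the functor $X \colon \scC \to \Kan$ viewed as a $0$-simplex of $N\Fun(\scC,\Kan)$ (equivalently, taking $n = 0$ and using the isomorphism $[0] \times \scC \cong \scC$), it shows that $\gamma_\scC X$ coincides with the image of $X$ under $N\Fun(\scC,\Kan) \cong \scFun(N\scC, N\Kan) \xrightarrow{\gamma \circ (-)} \scFun(N\scC, \scS)$. Unravelling the isomorphism $N\Fun(\scC,\Kan) \cong \scFun(N\scC, N\Kan)$, the functor $X \colon \scC \to \Kan$ corresponds to the morphism of simplicial sets $NX \colon N\scC \to N\Kan$, so its image under $\gamma \circ (-)$ is precisely the composite $\gamma \circ NX \colon N\scC \to \scS$. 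Hence $\gamma_\scC X = \gamma \circ NX$ as maps $N\scC \to \scS$.

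Then I would apply Theorem~\ref{st:LFib classified by gamma o NX}, which states exactly that $\gamma \circ NX \colon N\scC \to \scS$ classifies the left fibration $r_\scC^*X \to N\scC$ (note that $r_\scC^*X \to N\scC$ is indeed a left fibration: by Theorem~\ref{st:r_! r^* QEq} it is a fibrant object of the covariant model structure on $\sSet_{/N\scC}$ since $X$ lands in Kan complexes, hence a left fibration by Theorem~\ref{st:covariant MoStr}). Combining the two identifications yields that $\gamma_\scC X$ classifies $r_\scC^*X \to N\scC$, which is the claim.

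There is essentially no obstacle here; the corollary is a bookkeeping consequence of the preceding two theorems. The only point requiring a moment of care is checking that the general-$n$ statement of Theorem~\ref{st:gamma_C = gamma o N o dashv} does specialise correctly at $n = 0$ and that, under the identification $\scFun(N\scC, N\Kan) \simeq \Fun(\scC,\Kan)$-on-objects, the object $X$ is sent to $NX$ rather than to some twisted variant — but this is immediate from the explicit description of $\gamma_\scC$ in the proof of Theorem~\ref{st:functor NFun(C, Kan) -> Fun(NC, S)}, where the underlying left fibration attached to $X$ is by definition $r_\scC^*X \to N\scC$.
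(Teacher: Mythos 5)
Your proof is correct and follows exactly the same route as the paper, which simply cites Theorems~\ref{st:LFib classified by gamma o NX} and~\ref{st:gamma_C = gamma o N o dashv} as immediately yielding the corollary. The additional unwinding you provide (specialising at $n=0$ and checking that $X$ corresponds to $NX$) is a faithful elaboration of that one-line argument.
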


\begin{proof}
This now follows immediately from combining Theorems~\ref{st:LFib classified by gamma o NX} and~\ref{st:gamma_C = gamma o N o dashv}.
\end{proof}

\begin{remark}
\label{rmk:functoriality of Fun(-,scS)}
For a morphism $f \colon K \to L$ of simplicial sets, we can describe the induced morphism $f^* \colon \scFun(L, \scS) \longrightarrow \scFun(K, \scS)$ as follows:
let $\{A\}$ be a consistent family of left fibrations $A(a) \to \Delta^k$, for each $k \in \NN_0$ and $a \colon \Delta^k \to \Delta^n {\times} L$, describing an $n$-simplex $F \colon \Delta^n {\times} L \to \scS$ in $\scFun(L, \scS)$ as in Remark~\ref{rmk:splcs in Fun(L, scS)}.
The consistent family $\{f^*A\}$, describing the $n$-simplex
\begin{equation}
\begin{tikzcd}[column sep=1.5cm]
	f^*F \colon \Delta^n \times K \ar[r, "1_{\Delta^n} \times f"]
	& \Delta^n \times L \ar[r, "F"]
	& \scS\,,
\end{tikzcd}
\end{equation}
assigns to a map $b \colon \Delta^k \to \Delta^n {\times} K$ the left fibration
\begin{equation}
	A \big( (1_{\Delta^n} \times f) \circ b \big) \longrightarrow \Delta^k\,,
\end{equation}
which is part of the original consistent family $\{A\}$ that describes $F$.
For each morphism $u \colon \Delta^m \to \Delta^n$ we obtain a commutative square
\begin{equation}
\begin{tikzcd}[column sep=1.5cm, row sep=1cm]
	\Delta^m \times K \ar[r, "1_{\Delta^m \times f}"] \ar[d, "u \times 1_K"']
	& \Delta^m \times L \ar[d, "u \times 1_L"]
	\\
	\Delta^n \times K \ar[r, "1_{\Delta^n \times f}"']
	& \Delta^n \times L
\end{tikzcd}
\end{equation}
This implies that $f^*$ is indeed a morphism of simplicial sets $\scFun(L, \scS) \longrightarrow \scFun(K, \scS)$.
\qen
\end{remark}

\begin{corollary}
\label{st:precomposition intertwines gammas}
Any functor $\psi \colon \scC \to \scD$ of $\rmU$-small categories induces a commutative square
\begin{equation}
\begin{tikzcd}[column sep=1.25cm]
	N \Fun(\scD, \Kan) \ar[r, "\gamma_\scD"] \ar[d, "\psi^*"']
	& \scFun(N\scD, \scS) \ar[d, "N\psi^*"]
	\\
	N \Fun(\scC, \Kan) \ar[r, "\gamma_\scC"']
	& \scFun(N\scC, \scS)
\end{tikzcd}
\end{equation}
This is compatible with composition of functors.
\end{corollary}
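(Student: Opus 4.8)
The plan is to deduce this directly from Theorem~\ref{st:gamma_C = gamma o N o dashv}, which expresses $\gamma_\scC$ (resp.\ $\gamma_\scD$) as the composite of the canonical isomorphism $\iota_\scC\colon N\Fun(\scC,\Kan)\xrightarrow{\ \cong\ }\scFun(N\scC,N\Kan)$ (resp.\ $\iota_\scD$) with the postcomposition functor $\gamma\circ(-)\colon\scFun(N\scC,N\Kan)\to\scFun(N\scC,\scS)$. It therefore suffices to establish two strictly commuting squares: one saying that $\iota$ intertwines $\psi^*$ on strict diagrams with the restriction $(N\psi)^*$ of $N\Kan$-valued coherent diagrams, and one saying that $\gamma\circ(-)$ intertwines the restriction $(N\psi)^*$ of $N\Kan$-valued diagrams with the restriction $N\psi^*$ of $\scS$-valued diagrams; pasting these two squares horizontally produces exactly the square of the Corollary, since the two top (resp.\ bottom) composites are $\gamma_\scD$ (resp.\ $\gamma_\scC$) by Theorem~\ref{st:gamma_C = gamma o N o dashv}.

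For the first square I would evaluate on $n$-simplices: under the identification used in the proof of Theorem~\ref{st:gamma_C = gamma o N o dashv}, an $n$-simplex of $N\Fun(\scD,\Kan)$ is a functor $X^\dashv\colon[n]\times\scD\to\Kan$, the isomorphism $\iota_\scD$ sends it to $NX^\dashv\colon\Delta^n\times N\scD\cong N([n]\times\scD)\to N\Kan$, and $\psi^*$ sends $X^\dashv$ to $X^\dashv\circ(1_{[n]}\times\psi)$. Since $N$ is a functor preserving finite products, $N\bigl(X^\dashv\circ(1_{[n]}\times\psi)\bigr)=NX^\dashv\circ(1_{\Delta^n}\times N\psi)$, which is precisely the restriction of $\iota_\scD(X)$ along $1_{\Delta^n}\times N\psi$; so the first square commutes on the nose. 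For the second square I would use the explicit description of the restriction morphisms: by Remark~\ref{rmk:functoriality of Fun(-,scS)}, restriction along $N\psi$ sends an $n$-simplex $G\colon\Delta^n\times N\scD\to\scS$ to $G\circ(1_{\Delta^n}\times N\psi)$, and the same formula (read in $N\Kan$) describes restriction of $N\Kan$-valued diagrams; hence for $F\colon\Delta^n\times N\scD\to N\Kan$ one has $(\gamma\circ F)\circ(1_{\Delta^n}\times N\psi)=\gamma\circ\bigl(F\circ(1_{\Delta^n}\times N\psi)\bigr)$ purely by associativity of composition, which is the commutativity of the second square.

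I do not expect a real obstacle here, as this is a corollary; the only point requiring a little care is that the operation ``restrict along $N\psi$'' appears in three guises --- as $\psi^*$ on strict diagrams, as $(N\psi)^*$ on $N\Kan$-valued coherent diagrams, and as $N\psi^*$ on $\scS$-valued coherent diagrams --- and one must check these correspond under the relevant identifications; this reduces, as above, to $N$ preserving products and to associativity of composition. Alternatively, one could avoid Theorem~\ref{st:gamma_C = gamma o N o dashv} altogether and verify the square of the Corollary by hand on $n$-simplices, using the construction of $\gamma_\scC$ from the proof of Theorem~\ref{st:functor NFun(C, Kan) -> Fun(NC, S)}, the change-of-index-category isomorphism of Lemma~\ref{st:r^* and change of index category} applied to $1_{[n]}\times\psi$ (which identifies $r^*_{[n]\times\scC}\bigl((1_{[n]}\times\psi)^*X^\dashv\bigr)$ with the pullback $(1_{\Delta^n}\times N\psi)^*\bigl(r^*_{[n]\times\scD}X^\dashv\bigr)$), and the description of $N\psi^*$ from Remark~\ref{rmk:functoriality of Fun(-,scS)}.

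Finally, for compatibility with composition I would note that $(\chi\circ\psi)^*=\psi^*\circ\chi^*$ on strict diagram categories and that $N(\chi\circ\psi)=N\chi\circ N\psi$, so that the induced restrictions on $\scFun(-,\scS)$ satisfy $N(\chi\circ\psi)^*=N\psi^*\circ N\chi^*$ by Remark~\ref{rmk:functoriality of Fun(-,scS)}; hence the square attached to $\chi\circ\psi$ is the horizontal pasting of the squares attached to $\psi$ and $\chi$, and the square attached to $\id_\scC$ is the identity square.
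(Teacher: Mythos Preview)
Your proposal is correct and follows essentially the same route as the paper: both arguments reduce the commutativity of the square to Theorem~\ref{st:gamma_C = gamma o N o dashv} together with the identity $\gamma \circ N\bigl(X^\dashv \circ (1_{[n]}\times\psi)\bigr) = \gamma \circ N(X^\dashv) \circ (1_{\Delta^n}\times N\psi)$, which holds because $N$ is a functor preserving products. The paper's proof simply states this identity in one line, whereas you have unpacked it into two commuting squares and also spelled out the compatibility with composition (which the paper asserts without proof); your added detail is sound.
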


\begin{proof}
Given any $n$-simplex $X \colon [n] {\times} \scC \to \Kan$ in $N \Kan$, we readily observe that
\begin{equation}
	\gamma \circ N \big( X^\dashv \circ (1_{[n]} \times \psi) \big)
	= \gamma \circ N (X^\dashv) \circ (1_{\Delta^n} \times N\psi)\,.
\end{equation}
The claim now follows from Theorem~\ref{st:gamma_C = gamma o N o dashv}.
\end{proof}

\section{Localisation of simplicial diagrams}
\label{sec:gamma}

In this section we prove our main theorem:
the functor $\gamma_\scC$ exhibits $\scFun(N\scC, \scS)$ as the $\infty$-categorical localisation of $N\Fun(\scC, \Kan)$ at the objectwise weak homotopy equivalences.
To achieve this, we first show that $\gamma_\scC$ detects objectwise weak homotopy equivalences and that it is a left exact functor between $\infty$-categories with weak equivalences and fibrations (see Appendix~\ref{sec:infty-localisation} for background).

\begin{proposition}
\label{st:gamma_C preserves and detects weqs}
Suppose $f \colon X_0 \to X_1$ is a morphism in $\Fun(\scC, \Kan)$, corresponding to a functor $X_f \colon [1] {\times} \scC \to \Kan$.
Then, $f$ is an objectwise weak equivalence in $\Fun(\scC, \Kan)$ if and only if the 1-simplex $\gamma_\scC(X_f) \in \scFun(N\scC,\scS)_1$ is an equivalence in the $\infty$-category $\scFun(N\scC, \scS)$.
\end{proposition}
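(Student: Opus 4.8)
The plan is to reduce to the case $\scC = *$ and then to identify the covariant transport map of the left fibration $r_{[1]}^* X_f \to \Delta^1$ with $f$ itself.

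First I would reduce to a point. Since equivalences in a functor $\infty$-category $\scFun(N\scC, \scS)$ are detected objectwise, the $1$-simplex $\gamma_\scC(X_f)$ is an equivalence if and only if its restriction along every vertex $\{c\} \colon \Delta^0 \to N\scC$ is an equivalence in $\scS$. Applying Corollary~\ref{st:precomposition intertwines gammas} to the functor $c \colon * \to \scC$ picking out an object $c$, we obtain $\{c\}^* \circ \gamma_\scC = \gamma \circ c^*$, so this restriction is $\gamma$ applied to the functor $[1] \to \Kan$ corresponding to the component $f_c \colon X_0(c) \to X_1(c)$. It therefore suffices to prove the statement when $\scC = *$: for a morphism $f \colon X_0 \to X_1$ of Kan complexes with associated functor $X_f \colon [1] \to \Kan$, the $1$-simplex $\gamma(X_f) \in \scS_1$ is an equivalence in $\scS$ if and only if $f$ is a weak homotopy equivalence.

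Next I would analyse $r_{[1]}^* X_f \to \Delta^1$, which by Theorem~\ref{st:functor NKan -> S} is the underlying left fibration of $\gamma(X_f)$. By Lemma~\ref{st:r^* and change of index category} its fibre over a vertex $\{i\} \colon [0] \to [1]$ is $r_{[0]}^*(\{i\}^* X_f)$, and $r_{[0]}^* Z \cong Z$ for every $Z \in \sSet$ (immediate from~\eqref{eq:r_C^* explicit}: over the point, the compatibility condition~\eqref{eq:compatibility in r_C^*F} forces a family $(x_u)_u$ to be determined by $x_{\id}$), so the two fibres are $X_0$ and $X_1$. The crucial step is to show that the covariant transport map along the nondegenerate edge of $\Delta^1$, a map $X_0 \to X_1$, is $f$. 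For this I would produce an explicit transport lift: unwinding~\eqref{eq:r_C^* explicit}, the assignment sending an $n$-simplex $(\sigma, \beta)$ of $X_0 \times \Delta^1$, with $\sigma \in (X_0)_n$ and $\beta \colon [n] \to [1]$, to the family $(x_u)_{u \colon [k] \to [n]}$ given by $x_u = \sigma|_u$ when $\beta_{u(k)} = 0$ and $x_u = f(\sigma|_u)$ when $\beta_{u(k)} = 1$, defines a morphism $H \colon X_0 \times \Delta^1 \to r_{[1]}^* X_f$ over $\Delta^1$; the compatibility condition~\eqref{eq:compatibility in r_C^*F} holds because along any factorisation the relevant structure map of $X_f$ is the identity or $f$, and $\beta_{u(k)} \le \beta_{u'(k')}$. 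Since $H$ restricts on $X_0 \times \{0\}$ to the inclusion of the fibre over $\{0\}$, it computes the transport map, and its restriction to $X_0 \times \{1\}$ is, under $r_{[0]}^* X_1 \cong X_1$, precisely $f$. (As a sanity check, the edge can be lifted along a vertex $y$ by the degenerate edge at $f(y)$, so the transport sends $y$ to $f(y)$.)

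Finally, a $1$-simplex of $\scS$ classified by a left fibration over $\Delta^1$ with Kan-complex fibres represents, in the homotopy category $\rmh\scS$ (which is the homotopy category of Kan complexes), the homotopy class of its transport map; it is therefore an equivalence in $\scS$ precisely when this transport map is a homotopy equivalence, which by Whitehead's theorem is equivalent to being a weak homotopy equivalence since both ends are Kan complexes. As the transport map is $f$, this finishes the proof. (For the direction in which $f$ is assumed to be a weak equivalence, one may alternatively use Corollary~\ref{st:weakly constant diags and KanFibs}: then $r_{[1]}^* X_f \to \Delta^1$ is a Kan fibration, hence classified by a map into the maximal Kan subcomplex of $\scS$, so $\gamma(X_f)$ is an equivalence.) I expect the explicit identification of the transport map with $f$ to be the main obstacle; the remaining ingredients are either formal properties of functor $\infty$-categories or standard input from Cisinski's construction of $\scS$, though some care is needed to state precisely that a $1$-simplex of $\scS$ represents its transport map.
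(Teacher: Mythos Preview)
Your proof is correct and proceeds along a genuinely different route from the paper's. The paper treats the two implications separately and symmetrically through Cisinski's Proposition~5.2.13: it characterises the equivalences $\Delta^1 \to \scS$ as precisely those classifying Kan (rather than merely left) fibrations, and then translates the Kan-fibration condition on $r_{[1]}^*(X_{f|c}) \to \Delta^1$ back into the locality of $X_{f|c}$ in the Bousfield localisation $L^B_{[1]}\Fun([1],\sSet)$, using the Quillen equivalences $r_{[1]!} \dashv r_{[1]}^*$ and $h_{[1]!} \dashv h_{[1]}^*$ from Section~\ref{sec:r_C! -| r_C^*}. Your argument instead identifies the covariant transport map of $r_{[1]}^* X_f \to \Delta^1$ with $f$ by writing down an explicit left-anodyne lift, and then appeals once to the equivalence $\rmh\scS \simeq \Ho(\sSet)$ (Cisinski, Thm.~5.4.5 and Par.~5.3.14) to handle both directions at once.

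Your approach is more elementary and uniform; its only cost is needing to cite carefully that a $1$-simplex of $\scS$ represents the homotopy class of its transport map (which is the content of~\cite[Par.~5.3.14, Thm.~5.4.5]{Cisinski:HCats_HoAlg}). The paper's approach avoids that citation by staying within the Kan-fibration/Bousfield-localisation picture, at the price of invoking more of the Heuts--Moerdijk machinery. It is worth noting that the explicit lift $H$ you build is essentially the same construction the paper carries out later (the map $\varphi$ in diagram~\eqref{eq:lift for r_C^* and RepFib} in the proof of Theorem~\ref{st:localisation theorem}), so you are anticipating a technique that reappears there.
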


\begin{proof}
A morphism $F \in \scFun(N\scC,\scS)_1 \cong \rmV\sSet(\Delta^1 {\times} N\scC, \scS)$ is an objectwise equivalence precisely if, for each $c \in \scC$, the restriction \smash{$F_{|c}$} of $F$ to $\Delta^1 {\times} \{c\}$ is an equivalence in $\scS$~\cite[Cor.~3.5.12]{Cisinski:HCats_HoAlg}.

Suppose first that $f \colon X_0 \to X_1$ is a weak equivalence.
In particular, for each $c \in \scC$, the morphism \smash{$f_{|c} \colon X_0(c) \to X_1(c)$} is a weak equivalence between Kan complexes, and thus a fibrant object in $L^B_{[1]} \Fun([1], \sSet)$ in the notation of Proposition~\ref{st:r_! r^* localised}.
By Corollary~\ref{st:LFib classified by gamma_C(X)} the morphism
\begin{equation}
	\gamma_\scC (X_f) \colon \Delta^1 {\times} N\scC \longrightarrow \scS
\end{equation}
classifies the left fibration \smash{$r_{[1] {\times} \scC}^*X_f \longrightarrow \Delta^1 {\times} N\scC$}.
By Lemma~\ref{st:r^* and change of index category}, there is a cartesian square
\begin{equation}
\begin{tikzcd}
	r_{[1]}^* (X_{f_{|c}}) \ar[r] \ar[d]
	& r_{[1] \times \scC}^*(X_f) \ar[d]
	\\
	\Delta^1 \times \{c\} \ar[r]
	& \Delta^1 \times N\scC
\end{tikzcd}
\end{equation}
That is, the restriction $(\gamma_\scC X_f)_{|c} \colon \Delta^1 {\times} \{c\} \to \scS$ classifies the left fibration \smash{$r_{[1]}^* (X_{f_{|c}}) \to \Delta^1$}.
According to Corollary~\ref{st:weakly constant diags and KanFibs}, this is even a Kan fibration.
Thus, the functor $(\gamma_{[1] \times \scC} X_f)_{|c} \colon \Delta^1 \to \scS$ factors through the maximal subgroupoid $k(\scS) \subset \scS$, by~\cite[Prop.~5.2.13]{Cisinski:HCats_HoAlg}.

Now suppose that $f \colon X_0 \to X_1$ is such that $\gamma_\scC(X_f)$ is an equivalence in $\Fun(N\scC, \scS)$.
By Corollary~\ref{st:LFib classified by gamma_C(X)}, the above pullback diagram and~\cite[Prop.~5.2.13]{Cisinski:HCats_HoAlg}, it follows that
\begin{equation}
	r_{\{c\} \times [1]}^* (X_{f_{|c}}) \cong (r_{\scC \times [1]} X_f)_{|c} \longrightarrow \Delta^1
\end{equation}
is a Kan fibration, for each $c \in \scC$.
We need to show that $X_{f_{|c}}$ is local in \smash{$L^B_{[1]}\Fun([1], \sSet)$}, for each $c \in \scC$.
That will follow if \smash{$X_{f_{|c}}$} is isomorphic in $\Ho(\Fun([1], \sSet))$ to a diagram $[1] \to \sSet$ which sends the non-identity morphism in $[1]$ to a weak equivalence.
Theorem~\ref{st:h_! -| h^* QEq localised} shows that \smash{$h_{[1]}^* r_{[1]}^* X_{f_{|c}}$} is a fibrant object in \smash{$L^B_{[1]} \Fun([1], \sSet)$}, and Theorem~\ref{st:r_! r^* QEq} implies that there is an isomorphism in $\Ho(\Fun([1], \sSet))$ between $X_{f_{|c}}$ and \smash{$h_{[1]}^* r_{[1]}^* X_{f_{|c}}$}.
\end{proof}

\begin{remark}
Alternatively, one can prove Proposition~\ref{st:gamma_C preserves and detects weqs} by appealing to Corollary~\ref{st:LFib classified by gamma_C(X)}, Proposition~\ref{st:r_! r^* localised} and~\cite[Prop.~5.3.16]{Cisinski:HCats_HoAlg}.
\qen
\end{remark}

\begin{remark}
\label{rmk:N(sSet^C) as ooCat with weqs and fibs}
We turn $N \Fun(\scC, \sSet)$ into an $\infty$-category with weak equivalences and fibrations (see Definition~\ref{def:infty-cat with weqs and fibs}):
we define its class $\sfF$ of fibrations to consist of the projective fibrations and its class $\sfW$ of weak equivalences to consist of the objectwise weak equivalences.
That this satisfies the axioms in Definitions~\ref{def:class of fibs} and~\ref{def:infty-cat with weqs and fibs} follows directly from the model category axioms for the projective model structure on $\Fun(\scC, \sSet)$.
Its underlying $\infty$-category of fibrant objects (see Definition~\ref{def:ooCat of fib obs}) coincides with $N \Fun(\scC, \Kan)$ (see also Example~\ref{eg:ooCat of fib obs from MoCat}).
\qen
\end{remark}

\begin{corollary}
The functor $\gamma_\scC \colon N \Fun(\scC, \Kan) \longrightarrow \scFun(N\scC, \scS)$ factors through the $\infty$-categorical localisation $L_\sfW N \Fun(\scC, \Kan)$ and detects \emph{weak} equivalences.
\end{corollary}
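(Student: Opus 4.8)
The plan is to derive this corollary formally from Proposition~\ref{st:gamma_C preserves and detects weqs} together with the universal property of the $\infty$-categorical localisation recalled in Appendix~\ref{sec:infty-localisation}. The preliminary observation is that a morphism of the $\infty$-category $N\Fun(\scC,\Kan)$ is by definition a $1$-simplex, hence — under the isomorphism $N_1\Fun(\scC,\Kan)\cong\Fun([1]{\times}\scC,\Kan)$ used throughout Section~\ref{sec:from strict to coherent diagrams} — exactly a functor $X_f\colon[1]{\times}\scC\to\Kan$, and its image under $\gamma_\scC$ is the $1$-simplex $\gamma_\scC(X_f)$ of $\scFun(N\scC,\scS)$. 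So Proposition~\ref{st:gamma_C preserves and detects weqs} applies verbatim to every morphism of $N\Fun(\scC,\Kan)$: such a morphism lies in the class $\sfW$ of objectwise weak equivalences (in the sense of Remark~\ref{rmk:N(sSet^C) as ooCat with weqs and fibs}) if and only if $\gamma_\scC$ carries it to an equivalence in $\scFun(N\scC,\scS)$.

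For the factorisation, I would invoke the defining property of the localisation functor $\ell\colon N\Fun(\scC,\Kan)\to L_\sfW N\Fun(\scC,\Kan)$: for every $\infty$-category $\scD$, precomposition with $\ell$ identifies $\scFun\big(L_\sfW N\Fun(\scC,\Kan),\scD\big)$ with the full subcategory of $\scFun\big(N\Fun(\scC,\Kan),\scD\big)$ spanned by the functors sending each morphism in $\sfW$ to an equivalence of $\scD$ (see Appendix~\ref{sec:infty-localisation}, following~\cite{Cisinski:HCats_HoAlg}). The forward implication of Proposition~\ref{st:gamma_C preserves and detects weqs} says precisely that $\gamma_\scC\colon N\Fun(\scC,\Kan)\to\scFun(N\scC,\scS)$ sends every morphism in $\sfW$ to an equivalence, so it lies in this full subcategory; hence there is a functor $\overline{\gamma}_\scC\colon L_\sfW N\Fun(\scC,\Kan)\to\scFun(N\scC,\scS)$, essentially unique, with $\overline{\gamma}_\scC\circ\ell\simeq\gamma_\scC$. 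That $\gamma_\scC$ detects weak equivalences is then nothing but the reverse implication of Proposition~\ref{st:gamma_C preserves and detects weqs}: if $\gamma_\scC(f)$ is an equivalence in $\scFun(N\scC,\scS)$, then the corresponding $X_f\colon[1]{\times}\scC\to\Kan$ is an objectwise weak equivalence, i.e.\ $f\in\sfW$.

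I expect no genuine obstacle here: all the homotopy-theoretic content is already in Proposition~\ref{st:gamma_C preserves and detects weqs}, whose proof rests on Corollary~\ref{st:LFib classified by gamma_C(X)} and the Quillen equivalences $r_{\scC!}\dashv r_\scC^*$ and $h_{\scC!}\dashv h_\scC^*$ (Theorems~\ref{st:r_! r^* QEq}, \ref{st:h_! -| h^* QEq localised} and Proposition~\ref{st:r_! r^* localised}). The only points requiring a little care are the bookkeeping identification of morphisms of the nerve $N\Fun(\scC,\Kan)$ with functors out of $[1]{\times}\scC$, and citing the correct form of the universal property of $L_\sfW$ from the appendix; both are routine once set up.
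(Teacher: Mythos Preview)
Your proposal is correct and matches the paper's approach: the corollary is stated without proof in the paper precisely because it is immediate from Proposition~\ref{st:gamma_C preserves and detects weqs} together with the universal property of the localisation, exactly as you have spelled out.
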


\begin{remark}
\label{rmk:scS^NC as ooCat with weqs and fibs}
We turn $\scFun(N\scC, \scS)$ into an $\infty$-category with weak equivalences and fibrations by defining its class of fibrations $\sfF'$ to consist of \emph{all} morphisms and its class $\sfW'$ to consist of the honest equivalences (i.e.~the objectwise weakly invertible morphisms) in $\scFun(N\scC, \scS)$.
This satisfies the axioms in Definitions~\ref{def:class of fibs} and~\ref{def:infty-cat with weqs and fibs} because $\scS$ has finite limits.
\qen
\end{remark}

The following is a well-known result, but we state it here nevertheless; the construction of $\gamma_\scC$ allows us to see this result very explicitly.

\begin{lemma}
\label{st:gamma and holims}
Let $\scI$ be a $\rmU$-small category, and let $X \colon \scI \to \Kan$ be a functor.
Then, there is a canonical equivalence of spaces
\begin{equation}
	\gamma ( \holim_\scI X) \simeq \lim^\scS_\scI (\gamma_\scI X)\,.
\end{equation}
Here, $\gamma$ is as in Theorem~\ref{st:functor NKan -> S} and $\gamma_\scI$ is as in Theorem~\ref{st:functor NFun(C, Kan) -> Fun(NC, S)}.
\end{lemma}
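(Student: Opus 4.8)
The strategy is to push both sides of the asserted equivalence down to the strict category $\Fun(\scI, \sSet)$ and to compare them there, using the Quillen equivalence $r_{\scI!} \dashv r_\scI^*$ of Theorem~\ref{st:r_! r^* QEq} together with the identification, from Corollary~\ref{st:LFib classified by gamma_C(X)}, of $\gamma_\scI X$ with the map classifying the left fibration $r_\scI^* X \to N\scI$. For the left-hand side one first notes that the rectification over $\Delta^0$ is canonically isomorphic to the identity functor on $\sSet$ (by density, $\colim_{\bbDelta_{/[n]}}(\Delta^\cdot \circ \pr) \cong \Delta^n$); hence, by the construction of $\gamma$ in Theorem~\ref{st:functor NKan -> S}, the object $\gamma(\holim_\scI X)$ of $\scS$ is just the Kan complex $\holim_\scI X$ itself, regarded as a space.

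For the right-hand side I would proceed in two moves. First, unravelling the defining formula~\eqref{eq:r_C^* explicit} shows that $r_\scI^*$ carries the terminal object of $\Fun(\scI, \sSet)$, namely the constant diagram $\ast$ at $\Delta^0$, to the terminal object $(N\scI, \id_{N\scI})$ of $\sSet_{/N\scI}$: each set $(r_\scI^*\ast)_{n, \alpha}$ consists of natural transformations into the constant functor at $\Delta^0$ --- the terminal object of the relevant functor category --- and is hence a singleton. Consequently the counit $r_{\scI!}(N\scI, \id_{N\scI}) \to \ast$ has the identity of $(N\scI, \id_{N\scI})$ as its adjunct; since $(N\scI, \id_{N\scI})$ is cofibrant in the covariant model structure and $\ast$ is projectively fibrant, the Quillen-equivalence property of $r_{\scI!} \dashv r_\scI^*$ forces this counit to be a projective weak equivalence. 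Thus $r_{\scI!}(N\scI, \id_{N\scI})$ is projectively cofibrant (a left-Quillen image of a cofibrant object) and weakly equivalent to $\ast$, i.e.~a cofibrant resolution of the terminal diagram; since $X$ is projectively fibrant, $\holim_\scI X$ is therefore computed by the simplicial hom-complex $\underline{\Fun(\scI, \sSet)}\bigl(r_{\scI!}(N\scI, \id_{N\scI}),\, X\bigr)$. Second, the derived adjunction attached to $r_{\scI!} \dashv r_\scI^*$ identifies this complex with $\underline{\sSet_{/N\scI}}\bigl((N\scI, \id_{N\scI}),\, r_\scI^* X\bigr)$; as $r_\scI^* X$ is fibrant in the simplicial covariant model structure (the right Quillen image of the projectively fibrant $X$) and $(N\scI, \id_{N\scI})$ is cofibrant, this is the \emph{derived} space of sections of the left fibration $r_\scI^* X \to N\scI$. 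By Corollary~\ref{st:LFib classified by gamma_C(X)} that left fibration is classified by $\gamma_\scI X$, and, by Cisinski's theory, the $\infty$-categorical limit of a diagram $N\scI \to \scS$ is precisely the derived space of sections of its classifying left fibration; hence the complex above computes $\lim^\scS_\scI(\gamma_\scI X)$. Concatenating the (manifestly natural) equivalences along the way gives the lemma.

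The step needing the most care is the last identification, i.e.~that $\underline{\sSet_{/N\scI}}\bigl((N\scI, \id_{N\scI}), r_\scI^* X\bigr)$ truly models $\lim^\scS_\scI(\gamma_\scI X)$ and not merely a naive section complex. This rests on the standard fact that over a simplicial set $A$ the covariant model structure on $\sSet_{/A}$ presents $\scFun(A, \scS)$ compatibly with the classifying-map correspondence, under which $\ast$ corresponds (via its rectification $(N\scI, \id_{N\scI})$) to the terminal functor $\mathbf 1 \in \scFun(N\scI, \scS)$, so that $\lim^\scS_\scI(-) \simeq \Map_{\scFun(N\scI, \scS)}(\mathbf 1, -)$ is modelled by the simplicial hom-complex between (co)fibrant representatives; the remaining (co)fibrancy bookkeeping is routine. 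Once the main localisation theorem is available the statement also follows at a stroke --- $\gamma$ is then an equivalence of $\infty$-categories onto $\scS$, so preserves limits, and homotopy limits in $\Fun(\scI, \sSet)$ present $\infty$-categorical limits in the localisation --- but the point of the argument above is that this may be seen directly and explicitly through the rectification functor.
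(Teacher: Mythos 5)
Your proof follows the same route as the paper's: identify $\gamma_\scI X$ with the classifying map of the left fibration $r_\scI^*X \to N\scI$, invoke Cisinski's result that $\lim^\scS_\scI(\gamma_\scI X)$ is the derived section space of that fibration, and then use $r_\scI^*(\ast) = \id_{N\scI}$ together with the simplicial Quillen equivalence $r_{\scI!} \dashv r_\scI^*$ to rewrite this section space as $\ul{\Fun(\scI,\sSet)}(r_{\scI!}r_\scI^*(\ast), X) \simeq \holim_\scI X$. The only difference is presentational: you spell out the identification $\gamma(\holim_\scI X) \simeq \holim_\scI X$ via $r_{[0]}^* \cong \id_\sSet$, which the paper leaves implicit under the equivalence $\rmh\scS \simeq \Ho(\sSet)$.
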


\begin{proof}
The functor $\gamma_\scI X$ classifies the left fibration $r_\scI^* X \to N\scI$ (by Corollary~\ref{st:LFib classified by gamma_C(X)}).
The $\infty$-categorical limit $\lim^\scS_\scI (\gamma_\scI X)$ is computed by the space of sections of this map by~\cite[Cor.~5.4.7]{Cisinski:HCats_HoAlg} under the equivalence $\rmh \scS \simeq \Ho (\sSet)$ (see, for instance, \cite[Cor.~5.3.21, Thm.~5.4.5]{Cisinski:HCats_HoAlg} for $A=*$; note that this does not rely on the statement that $\scS$ is the $\infty$-categorical localisation of $\sSet$).
One checks that the identity map $1_{N\scI}$ coincides with $r_\scI^*(*)$ as objects of $\sSet_{/N\scI}$; here, $r_\scI^*(*)$ is the functor $r_\scI^* \colon \Fun(\scI, \sSet) \longrightarrow \sSet_{/N\scI}$ applied to the constant diagram with value $\Delta^0$.
Furthermore, by Theorem~\ref{st:r_! r^* QEq} and since each object in $\sSet_{/N\scI}$ is cofibrant, the (non-derived) counit $r_{\scI!} r_\scI^*(*) \to *$ is a cofibrant replacement of $*$ in the projective model structure on $\Fun(\scI, \sSet)$.
Consequently, we have the following canonical isomorphisms in $\Ho(\sSet)$:
\begin{align}
	\ul{\sSet_{/N\scI}} (N\scI, r_\scI^* X)
	&\cong \ul{\sSet_{/N\scI}} (r_\scI^*(*), X)
	\\
	&\cong \ul{\Fun(\scI, \sSet)} (r_{\scI!} r_\scI^*(*), X)
	\\
	&\simeq \holim_\scI(X)\,.
\end{align}
That completes the proof.
\end{proof}

\begin{proposition}
\label{st:gamma_C is Lex}
The functor $\gamma_\scC \colon N\Fun(\scC, \Kan) \longrightarrow \scFun(N\scC, \scS)$ is left exact as a functor between $\infty$-categories with weak equivalences and fibrations (see Definition~\ref{def:Lex functor}).
\end{proposition}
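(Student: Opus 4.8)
The plan is to verify the conditions defining a left exact functor of $\infty$-categories with weak equivalences and fibrations (Definition~\ref{def:Lex functor}): that $\gamma_\scC$ preserves weak equivalences, sends fibrations to fibrations, preserves the terminal object, and sends cartesian squares along fibrations to cartesian squares. Two of these are immediate. Preservation of weak equivalences is precisely Proposition~\ref{st:gamma_C preserves and detects weqs} together with the corollary following it: a morphism $f$ of $\Fun(\scC,\Kan)$ lies in $\sfW$ exactly when the $1$-simplex $\gamma_\scC(X_f)$ is an equivalence in $\scFun(N\scC,\scS)$, i.e.\ lies in $\sfW'$. Preservation of fibrations is vacuous, since by Remark~\ref{rmk:scS^NC as ooCat with weqs and fibs} \emph{every} morphism of $\scFun(N\scC,\scS)$ is a fibration. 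For the terminal object, recall that the terminal object of $N\Fun(\scC,\Kan)$ is the constant diagram with value $\Delta^0$; as observed in the proof of Lemma~\ref{st:gamma and holims}, $r_\scC^*$ sends this constant diagram to the identity map $1_{N\scC}\colon N\scC\to N\scC$. Hence, by Corollary~\ref{st:LFib classified by gamma_C(X)}, the image of the terminal object under $\gamma_\scC$ classifies the identity left fibration on $N\scC$, and this left fibration is classified by the constant functor $N\scC\to\scS$ at the point, which is the terminal object of $\scFun(N\scC,\scS)$.

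The substance of the proof is stability under pullbacks along fibrations, and here I would first reduce to the case $\scC=\ast$. Limits in the functor $\infty$-category $\scFun(N\scC,\scS)$ are computed pointwise, so a square in $\scFun(N\scC,\scS)$ is cartesian if and only if its restriction along $\{c\}\hookrightarrow N\scC$ is cartesian in $\scS$ for every object $c$ of $\scC$. By Corollary~\ref{st:precomposition intertwines gammas} applied to the inclusion $\iota_c\colon\{c\}\hookrightarrow\scC$, restricting $\gamma_\scC$ along $\iota_c$ produces $\gamma\circ\ev_c$, where $\gamma\colon N\Kan\to\scS$ is the functor of Theorem~\ref{st:functor NKan -> S} and $\ev_c$ is evaluation at $c$. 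Since the weak equivalences and fibrations of $N\Fun(\scC,\Kan)$ are objectwise (Remark~\ref{rmk:N(sSet^C) as ooCat with weqs and fibs}), $\ev_c$ carries a cartesian square along a fibration to a homotopy cartesian square of Kan complexes one of whose legs is a Kan fibration. So it remains to prove that $\gamma\colon N\Kan\to\scS$ sends such squares to cartesian squares in $\scS$; more generally, that $\gamma$ preserves finite homotopy limits of Kan complexes. Taking the indexing category $\scI$ to be a cospan category (respectively the empty category), this is exactly Lemma~\ref{st:gamma and holims}: its canonical equivalence $\gamma(\holim_\scI X)\simeq\lim^\scS_\scI(\gamma_\scI X)$ — combined with Theorem~\ref{st:gamma_C = gamma o N o dashv}, which identifies $\gamma_\scI X$ with $\gamma\circ NX$, and with Corollary~\ref{st:precomposition intertwines gammas}, which realises $\gamma_\scI X$ as the restriction of the image of the square under $\gamma$ along the inclusion $N\scI\hookrightarrow\Delta^1{\times}\Delta^1$ — exhibits that image as a limit cone.

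The two places where some care is needed are the following. First, one must cite the standard fact that evaluation at the objects of $N\scC$ jointly detects limits in a functor $\infty$-category with complete target, so that being cartesian is indeed a pointwise condition. Second, and this is the main obstacle, one must make sure that the equivalence supplied by Lemma~\ref{st:gamma and holims} is natural in, and hence an equivalence of, the comparison \emph{cones}, not merely of their apices, so that $\gamma\circ NS$ is genuinely a limit diagram in $\scS$ rather than a diagram whose initial vertex is only abstractly equivalent to the limit. This is routine from the proof of Lemma~\ref{st:gamma and holims}, where the equivalence is assembled from the adjunction $r_{\scI!}\dashv r_\scI^*$ and the description of $\lim^\scS_\scI$ as the space of sections of the left fibration $r_\scI^*X\to N\scI$, both of which are manifestly compatible with the cone maps; but it is the step where the constructions must actually be unwound rather than quoted.
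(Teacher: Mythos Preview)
Your proposal is correct and follows essentially the same route as the paper: axioms~(1) and~(2) are dispatched exactly as you do (terminal object via $r_\scC^*(*)=1_{N\scC}$, fibrations trivially, weak equivalences via Proposition~\ref{st:gamma_C preserves and detects weqs}), and axiom~(3) is reduced to $\scC=*$ by the pointwise nature of limits in $\scFun(N\scC,\scS)$ and then handled by Lemma~\ref{st:gamma and holims}. The paper addresses your cone-versus-apex worry not by unwinding Lemma~\ref{st:gamma and holims} but by phrasing the problem as showing that the \emph{canonical comparison morphism} $\gamma_\scC(Y'\times_Y X)\to\lim^\scS_{\Lambda^2_2}(\gamma_{\mathrm{CSp}\times\scC}D)$ is an equivalence, which sidesteps the need to check naturality of the equivalence in the lemma; but this is a cosmetic difference rather than a substantive one.
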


\begin{proof}
We need to check the axioms of Definition~\ref{def:Lex functor}.
Axiom~(1) is satisfied because $\gamma_\scC(*)$ is the functor $N \scC \to \scS$ classifies the left fibration $1_{N\scC} \colon N\scC \to N\scC$, which corresponds to the final object in $\Fun(N\scC, \scS)$.
(Alternatively, the final object of $\scFun(N\scC, \scS)$ is the essentially unique functor whose value on each $c \in \scC$ is a final object $e \in \scS$, and we have $(\gamma_\scC (*))(c) \simeq \gamma_{[0]}(*(c)) \simeq e$.)
Axiom~(2) holds true because each morphism in $\scS$ is a fibration (see Remark~\ref{rmk:scS^NC as ooCat with weqs and fibs}) and the functor $\gamma_\scC$ preserves weak equivalences (by Proposition~\ref{st:gamma_C preserves and detects weqs}).
For axiom~(3), consider morphisms $v \colon Y' \to Y$ and $f \colon X \to Y$ in $\Fun(\scC, \Kan)$, where $f$ is a projective fibration.
Let $\mathrm{CSp} = \{0 \to 1 \leftarrow 2\}$ denote the cospan category, and let $D \colon \mathrm{CSp} \times \scC \to \Kan$ denote the diagram corresponding to the cospan $Y' \to Y \leftarrow X$ in $\Fun(\scC, \sSet)$.
We have to show that the square
\begin{equation}
\begin{tikzcd}
	\gamma_\scC \big( Y' \times_Y X \big) \ar[r] \ar[d]
	& \gamma_\scC(X) \ar[d, "\gamma_\scC(f)"]
	\\
	\gamma_\scC(Y') \ar[r, "\gamma_\scC(v)"']
	& \gamma_\scC(Y)
\end{tikzcd}
\end{equation}
is cartesian in $\scFun(N\scC, \scS)$.
The cospan obtained by omitting the top-left corner of this diagram is equivalently described by a digram $\tilde{D} \colon \Lambda^2_2 \times N\scC \to \scS$ (since $N \mathrm{CSp} = \Lambda^2_2$).
Explicitly, this diagram is given as \smash{$\tilde{D} = \gamma_{\mathrm{CSp} \times \scC} D$}.

There is a canonical morphism \smash{$\gamma_\scC(Y' \times_Y X) \to \lim_{\Lambda^2_2} D$}, and it suffices to show that this morphism is an equivalence in $\Fun(N\scC, \scS)$.
However, since limits in $\scFun(N\scC, \scS)$ are computed objectwise, we may restrict ourselves to the case where $\scC = *$ is the final category.
That is, we may replace the $\infty$-category $\scFun(N\scC, \scS)$ by $\scS$.
In that case, we need to show that the canonical morphism
\begin{equation}
	\gamma(Y' \times_Y X) \longrightarrow \lim^\scS_{\Lambda^2_2} (\gamma_{\mathrm{CSp}} D)
\end{equation}
is an equivalence (with $\gamma$ as in Theorem~\ref{st:functor NKan -> S}).
We have shown in Lemma~\ref{st:gamma and holims} that there is a canonical equivalence
\begin{equation}
	\gamma \big( \holim_{\mathrm{CSp}} D \big)
	\simeq \lim^\scS_{\Lambda^2_2} (\gamma_{\mathrm{CSp}} D)\,.
\end{equation}
However, by our assumptions on the cospan $Y' \to Y \leftarrow X$, the pullback $Y' \times_Y X$ already computes the homotopy limit of $D$, showing that $Y' \times_Y X \to \lim^\scS(\gamma_{\mathrm{CSp}} D)$ is indeed an equivalence.
\end{proof}

\begin{theorem}
\label{st:localisation theorem}
Let $\scC$ be a $\rmU$-small category.
The functor $\gamma_\scC \colon N \Fun(\scC, \Kan) \longrightarrow \scFun(N\scC, \scS)$ exhibits $\scFun(N\scC, \scS)$ as the $\infty$-categorical localisation of $\Fun(\scC, \Kan)$ at the objectwise weak equivalences.
\end{theorem}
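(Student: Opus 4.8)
The plan is to invoke Cisinski's general localisation machinery, recalled in Appendix~\ref{sec:infty-localisation}, which says that a left exact functor between $\infty$-categories with weak equivalences and fibrations which sends weak equivalences to weak equivalences and which satisfies a suitable comparison criterion induces an equivalence on the associated localisations. Concretely, we have arranged in Remark~\ref{rmk:N(sSet^C) as ooCat with weqs and fibs} that $N\Fun(\scC,\sSet)$ is an $\infty$-category with weak equivalences (the objectwise weak equivalences) and fibrations (the projective fibrations), whose underlying $\infty$-category of fibrant objects is $N\Fun(\scC,\Kan)$; and in Remark~\ref{rmk:scS^NC as ooCat with weqs and fibs} that $\scFun(N\scC,\scS)$ carries the structure with all morphisms fibrant and the objectwise equivalences as weak equivalences. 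We have checked in Proposition~\ref{st:gamma_C preserves and detects weqs} that $\gamma_\scC$ preserves and detects weak equivalences, and in Proposition~\ref{st:gamma_C is Lex} that $\gamma_\scC$ is left exact. So the first step is to state precisely which theorem of Cisinski we are applying; the natural candidate is his criterion that a left exact functor inducing an equivalence on homotopy categories of fibrant objects (or, more precisely, one that satisfies the conditions of his localisation theorem for $\infty$-categories with fibrations) presents the localisation.

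The second step is to verify the remaining hypothesis of that criterion, which I expect amounts to showing that $\gamma_\scC$ induces an equivalence of homotopy categories $\rmh\big(N\Fun(\scC,\Kan)\big) \to \rmh\,\scFun(N\scC,\scS)$, together with the requisite statement about mapping spaces (i.e.\ that $\gamma_\scC$ is homotopically fully faithful on the localisation, or that it satisfies the relevant ``$2$-out-of-$3$''-type hypothesis in Cisinski's setup). The homotopy-category statement is where I would route through the already-known comparison: by Theorem~\ref{st:r_! r^* QEq} and Proposition~\ref{st:r_! r^* localised}, the functor $r_\scC^*$ together with $h_\scC^*$ induces equivalences between the homotopy categories of $L^B_\scC\Fun(\scC,\sSet)$ and $(\sSet_{/N\scC})_{KQ}$, and the latter presents $\scFun(N\scC,\scS)$ via Cisinski's identification of $\scS$ with the localisation of $\sSet$ (and its slice/functor versions, \cite[Thm.~7.8.9, Cor.~7.9.9]{Cisinski:HCats_HoAlg}). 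Corollary~\ref{st:LFib classified by gamma_C(X)} identifies the left fibration classified by $\gamma_\scC X$ with $r_\scC^* X$, which is exactly the bridge letting me transfer the ``is an equivalence on homotopy categories'' statement from $r_\scC^*$ to $\gamma_\scC$.

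Alternatively — and this may be cleaner — I would factor $\gamma_\scC$ through the localisation $L_\sfW N\Fun(\scC,\Kan)$ (which exists by the corollary following Proposition~\ref{st:gamma_C preserves and detects weqs}) and show that the induced functor $\bar\gamma_\scC \colon L_\sfW N\Fun(\scC,\Kan) \to \scFun(N\scC,\scS)$ is an equivalence of $\infty$-categories by checking that it is essentially surjective and fully faithful. Essential surjectivity follows because every object of $\scFun(N\scC,\scS)$ classifies a left fibration over $N\scC$, which by Proposition~\ref{st:nat weqs for r_! and h_!} and the Quillen equivalences is weakly equivalent to one of the form $r_\scC^* X$ with $X$ a projectively fibrant diagram, hence in the image of $\gamma_\scC$ up to equivalence. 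Full faithfulness is the place where left exactness does its job via Cisinski's theory: a left exact functor between $\infty$-categories with weak equivalences and fibrations that preserves and detects weak equivalences induces, on each mapping space, a comparison which is an equivalence provided the functor is ``locally'' an equivalence — and this reduces, by the usual path-object/loop-space arguments in Cisinski's framework (e.g.\ his treatment in \cite[Ch.~7]{Cisinski:HCats_HoAlg}), to the statement about homotopy categories together with left exactness.

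The main obstacle I anticipate is pinning down exactly which of Cisinski's theorems to cite and checking that the structures set up in Remarks~\ref{rmk:N(sSet^C) as ooCat with weqs and fibs} and~\ref{rmk:scS^NC as ooCat with weqs and fibs} satisfy all of its hypotheses verbatim — in particular that one really is in the situation of an ``exact functor between $\infty$-categories with weak equivalences and fibrations'' (a ``left derivable'' situation in Cisinski's terminology) rather than needing an extra saturation or $2$-out-of-$6$ argument. The homotopical content — that $\gamma_\scC$ is an equivalence after localising — is essentially already packaged in Theorem~\ref{st:r_! r^* QEq}, Proposition~\ref{st:r_! r^* localised}, and Corollary~\ref{st:LFib classified by gamma_C(X)}; the work is organising these into the precise input that Cisinski's localisation theorem wants, and then quoting that theorem to conclude that $\gamma_\scC$ exhibits $\scFun(N\scC,\scS) \simeq L_\sfW N\Fun(\scC,\Kan)$.
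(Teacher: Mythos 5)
Your high-level plan — apply Cisinski's criterion for left exact functors (\cite[Cor.~7.6.11]{Cisinski:HCats_HoAlg}, Proposition~\ref{st:equiv criterion for Lex functors}), using Proposition~\ref{st:gamma_C preserves and detects weqs} and Proposition~\ref{st:gamma_C is Lex} to satisfy its hypotheses and then reduce to showing $\rmh\bbR\gamma_\scC$ is an equivalence of ordinary categories — is exactly the structure of the paper's proof. Two issues, though. First, you propose to establish that $(\sSet_{/N\scC})_{KQ}$ presents $\scFun(N\scC,\scS)$ by citing \cite[Thm.~7.8.9, Cor.~7.9.9]{Cisinski:HCats_HoAlg}. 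That is precisely the abstract localisation result the paper sets out to reprove from scratch (see the Introduction), so invoking it here is against the grain of the whole project. The paper instead uses only the more elementary \cite[Thm.~5.4.5]{Cisinski:HCats_HoAlg}, which is the statement that $\RepFib$ gives an equivalence $\rmh\scFun(N\scC,\scS) \simeq \Ho(\sSet_{/N\scC})$ classifying left fibrations; no appeal to $\scS$ being the localisation of $\sSet$ is needed.

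Second, and more substantively: you treat Corollary~\ref{st:LFib classified by gamma_C(X)} as the ``bridge'' that lets you transfer the homotopy-category equivalence from $\bbR r_\scC^*$ to $\rmh\gamma_\scC$. But that corollary only identifies $\RepFib \circ \gamma_\scC$ with $r_\scC^*$ on \emph{objects}. To conclude that $\RepFib \circ \rmh\gamma_\scC$ and $\bbR r_\scC^*$ agree as functors — so that one being an equivalence implies the other is — you must check that they agree on \emph{morphisms} as well, at least up to left homotopy in $\sSet_{/N\scC}$. This is not automatic and is where most of the paper's proof is spent: given a morphism $f \colon X_0 \to X_1$ in $\Fun(\scC,\Kan)$ encoded as $X_f \colon [1] \times \scC \to \Kan$, one must produce an explicit lift $\varphi$ in diagram~\eqref{eq:lift for r_C^* and RepFib} such that the resulting map $\varphi_1$ on fibres over $\Delta^{\{1\}}$ is literally $r_\scC^*(f)$, using the alternative presentation of $r_\scC^*$ from Lemma~\ref{st:r_C^*F|n,a using bbSigma^n} to write the sections explicitly. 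Without this, there is a real gap: a priori $\rmh\gamma_\scC$ and $\bbR r_\scC^*$ could induce the same map on isomorphism classes of objects yet differ as functors, and Cor.~7.6.11 would not apply.
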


\begin{proof}
Using the notation of Remarks~\ref{rmk:N(sSet^C) as ooCat with weqs and fibs} and~\ref{rmk:scS^NC as ooCat with weqs and fibs}, we have a commutative diagram
\begin{equation}
\begin{tikzcd}[column sep=2cm, row sep=1cm]
	N\Fun(\scC,\Kan) \ar[r, "\gamma_\scC"] \ar[d, "\ell_\sfW"']
	& \scFun(N\scC, \scS) \ar[d, equal]
	\\
	L_\sfW N\Fun(\scC,\Kan) \ar[r, "\bbR \gamma_\scC"']
	& L_{\sfW'} \scFun(N\scC, \scS)
\end{tikzcd}
\end{equation}
The vertical functors are localisation functors; the equality on the right-hand side holds true because we have chosen $\sfW'$ to consist precisely of the honest equivalences in $\scFun(N\scC, \scS)$.
Our goal is to show that the right derived functor
\begin{equation}
	\bbR \gamma_\scC \colon L_\sfW N\Fun(\scC, \Kan) \longrightarrow L_{\sfW'} \scFun(N\scC, \scS) = \scFun(N\scC, \scS)
\end{equation}
is an equivalence.
By Propositions~\ref{st:gamma_C is Lex} and~\ref{st:equiv criterion for Lex functors} it suffices to show that the induced functor
\begin{equation}
	\rmh \bbR \gamma_\scC \colon \rmh \big( L_\sfW N\Fun(\scC, \Kan) \big) \longrightarrow \rmh \big( \scFun(N\scC, \scS) \big)
\end{equation}
on homotopy categories is an equivalence (of ordinary categories).
By~\cite[Rmk.~7.1.6]{Cisinski:HCats_HoAlg} there is a canonical equivalence of categories
\begin{equation}
	\rmh \big( L_\sfW N\Fun(\scC, \Kan) \big)
	\simeq \Fun(\scC, \Kan)[\sfW^{-1}]\,,
\end{equation}
where on the right-hand side we have the ordinary 1-categorical localisation.
The latter can be computed has the model-categorical homotopy category $\Ho(\Fun(\scC,\sSet))$ of the projective model structure.

We claim that there is a commutative diagram
\begin{equation}
\label{eq:loc diag for gamma_C}
\begin{tikzcd}[column sep=2cm, row sep=1cm]
	\Fun(\scC,\Kan) \ar[r, "\rmh \gamma_\scC"] \ar[d]
	& \rmh \scFun(N\scC, \scS) \ar[d, "\RepFib"]
	\\
	\Ho \big( \Fun(\scC, \sSet) \big) \ar[r, "\bbR r_\scC^*"']
	& \Ho (\sSet_{/N\scC})
\end{tikzcd}
\end{equation}
of (ordinary) categories and functors.
The functor $\RepFib$ sends an $\infty$-functor $F \colon N\scC \to \scS$ to the left fibration it classifies; we refer to~\cite[Par.~5.3.14]{Cisinski:HCats_HoAlg} and the proof of~\cite[Thm.~5.4.5]{Cisinski:HCats_HoAlg} for details.
The functor $\bbR r_\scC^*$ is an equivalence by Theorem~\ref{st:r_! r^* QEq}, as is the functor $\RepFib$~\cite[Thm.~5.4.5]{Cisinski:HCats_HoAlg} (note that Cisinski writes $\mathit{LFib}(N\scC)$ for $\Ho (\sSet_{/N\scC})$).
Granted the claim that diagram~\eqref{eq:loc diag for gamma_C} commutes, it follows that the functor $\rmh \gamma_\scC$ exhibits $\rmh \scFun(N\scC, \scS)$ as the localisation of $\Fun(\scC, \sSet)$ at the objectwise weak equivalences, as ordinary categories.
Consequently, the induced functor from the homotopy category of the localisation
\begin{equation}
	\Fun(\scC, \Kan)[\sfW^{-1}]
	\simeq \rmh \big( L_\sfW N\Fun(\scC, \Kan) \big)
	\longrightarrow \rmh \scFun(N\scC, \scS)
\end{equation}
is an equivalence.
By the universal property of 1-categorical localisation this functor is canonically equivalent to $\rmh \bbR \gamma_\scC$.
We thus infer that $\rmh \bbR \gamma_\scC$ is an equivalence as well.

It remains to show that diagram~\eqref{eq:loc diag for gamma_C} commutes.
The commutativity on objects follows directly from Corollary~\ref{st:LFib classified by gamma_C(X)}; we can write
\begin{equation}
	\RepFib \circ \gamma_\scC (X)
	= (r_\scC^* X \to N\scC)\,.
\end{equation}
To see the commutativity on morphisms, let $f \colon X_0 \to X_1$ be a morphism in $\Fun(\scC, \Kan)$.
Equivalently, this is encoded in a functor $X_f \colon [1] \times \scC \to \Kan$.
The corresponding 1-simplex in $\scFun(N\scC, \scS)$ classifies the left fibration
\begin{equation}
	r_{[1] \times \scC}^* X_f \longrightarrow \Delta^1 \times N\scC\,.
\end{equation}
The morphism in $\Ho (\sSet_{/N\scC})$ obtained from this by applying the functor $\RepFib$ is described as follows (see~\cite[Par.~5.3.14]{Cisinski:HCats_HoAlg}):
first, note that because both objects are fibrant and cofibrant the set of morphisms $\Ho (\sSet_{/N\scC})(r_\scC^*X_0, r_\scC^*X_1)$ coincides with the set $\sSet_{/N\scC}(r_\scC^*X_0, r_\scC^*X_1)/{\sim_l}$ of left homotopy equivalence classes of morphisms in $\sSet_{/N\scC}(r_\scC^*X_0, r_\scC^*X_1)$ (in the sense of, for instance,~\cite[Def.~1.2.4]{Hovey:MoCats}).
Consider the following diagram in $\sSet$:
\begin{equation}
\label{eq:lift for r_C^* and RepFib}
\begin{tikzcd}
	\Delta^{\{0\}} \times r_\scC^*X_0 \ar[d, hookrightarrow] \ar[r]
	& r_{[1] \times \scC}^* X_f \ar[d]
	\\
	\Delta^1 \times r_\scC^* X_0 \ar[r] \ar[ur, dashed, "\varphi" description]
	& \Delta^1 \times N\scC
\end{tikzcd}
\end{equation}
It admits a lift $\varphi$ because the left-hand vertical morphism is a cofinal inclusion and hence left anodyne~\cite[Cor.~4.1.9]{Cisinski:HCats_HoAlg}, and the right-hand vertical morphism is a left fibration.
Taking the fibre of $\varphi$ over $\Delta^{\{1\}} \hookrightarrow \Delta^1$ (and using Lemma~\ref{st:r^* and change of index category}), we obtain a morphism $\varphi_1 \colon r_\scC^* X_0 \longrightarrow r_\scC^* X_1$.
Its class in $\sSet_{/N\scC}(r_\scC^*X_0, r_\scC^*X_1)/{\sim_l}$ is the image of $X_f$ under $\RepFib \circ \, \rmh \gamma_\scC$ (where $\sim_l$ denotes taking left homotopy classes); see~\cite[Par.~5.3.14]{Cisinski:HCats_HoAlg} and the proof of~\cite[Thm.5.4.5]{Cisinski:HCats_HoAlg}.
This left homotopy class is independent of the choice of lift $\varphi$ by~\cite[Prop.~5.3.15]{Cisinski:HCats_HoAlg}.

We have to show that there exists a choice for $\varphi$ such that $\varphi_1 \sim_l r_\scC^*(f)$.
This is greatly simplified by the description of $r_\scC^*$ from Lemma~\ref{st:r_C^*F|n,a using bbSigma^n}:
each $n$-simplex of $\Delta^1 {\times} N\scC$ arises from a unique pair $(u, \alpha)$ of functors
\begin{equation}
	u \colon [n] \to [1]
	\qqandqq
	\alpha \colon [n] \to \scC
\end{equation}
by applying the Yoneda embedding of $\bbDelta$ to $u$ and the nerve functor to $\alpha$ (recall that we denote the map $\Delta^n \to \Delta^1$ arising from $u$ again by $u$).
We may describe each $n$-simplex in \smash{$r_{[1] \times \scC}^*X_f$} over an $n$-simplex $(u, N\alpha) \in \Delta^1_n \times N_n \scC$ as a family $z = (z_{ij})_{i,j}$, where $z_{ij} \in X_f(u(j), \alpha_j)_i$, for each $0 \leq i \leq j \leq n$, such that $z$ forms a section as in Lemma~\ref{st:r_C^*F|n,a using bbSigma^n} (compare also~\eqref{eq:Sigma alpha} and surrounding text).
That is, $z$ satisfies the identities
\begin{equation}
	X_f(u(j-1,j), \alpha_{j-1,j})(z_{i,j-1})
	= z_{ij}
	= d_{i+1}(z_{i+1,j})\,,
	\quad
	\forall\ 0 \leq i < j \leq n\,,
\end{equation}
where $(u(j-1,j), \alpha_{j-1,j})$ is the image under $(u,\alpha)$ of the unique morphism $(j{-}1) \to j$ in $[n]$.

We can naturally split this family into two parts, $z = (x,y)$, where
\begin{alignat}{3}
	& x_{ij} \in X_0(\alpha_j)_i\,,
	&& \qquad
	&& u(j) = 0\,,
	\\
	& y_{ij} \in X_1(\alpha_j)_i\,,
	&& \qquad
	&& u(j) = 1\,.
\end{alignat}
The fact that $z = (x,y)$ forms a section amounts to these data satisfying the conditions
\begin{alignat}{3}
\label{eq:identities for lift varphi}
	X_0(\alpha_{j-1,j})_i (x_{ij}) &= x_{ij} = d_{i+1} (x_{i+1,j})\,,
	&& \qquad
	&& j \in [n] \text{ with } u(j) = 0\,,
	\\*
	(f_{|\alpha_j})_i \circ X_0(\alpha_{j-1,j})_i (x_{i,j-1}) &= y_{ij} = d_{i+1} (y_{i+1,j})\,,
	&& \qquad
	&& j \in [n] \text{ with } u(j-1) = 0 \text{ and } u(j) = 1\,,
	\\*
	X_1(\alpha_{j-1,j})_i (y_{ij}) &= y_{ij} = d_{i+1} (y_{i+1,j})\,,
	&& \qquad
	&& j \in [n] \text{ with } u(j-1) = 1\,.
\end{alignat}

With these preparations, we construct a lift $\varphi$ in diagram~\eqref{eq:lift for r_C^* and RepFib} as follows:
consider an $n$-simplex $(u, N\alpha, x = (x_{ij})_{i,j})$ in $\Delta^1 \times r_\scC^*X_0$ over the $n$-simplex $(u, N\alpha) \in (\Delta^1 {\times} N\scC)_n$; that is, $x \in r_\scC^*X_0$ is an $n$-simplex over $N\alpha$.
We define $\varphi(u, N\alpha, x)$ as the pair $z_{ij} = (x'_{ij}, y_{ij})$, where we set $x'_{ij} = x_{ij} \in X_0(\alpha_j)_i$ whenever $u(j) = 0$ and $y_{ij} = f_{|\alpha_j}(x_{ij}) \in X_1(\alpha_j)_i$ whenever $u(j) = 1$.
This satisfies the identities~\eqref{eq:identities for lift varphi} because $x$ is an $n$-simplex of $r_\scC^*X_0$ and $f \colon X_0 \to X_1$ is a natural transformation of diagrams of simplicial sets, i.e.~it is compatible with the face and degeneracy maps of $X_0$ and $X_1$.
For the same reasons, it follows that this defines a morphism
\begin{equation}
	\varphi \colon \Delta^1 \times r_\scC^* X_0 \longrightarrow r_{[1] \times \scC}^*X_f
\end{equation}
as desired.
The resulting map $\varphi_1 \colon r_\scC^*X_0 \to r_\scC^*X_1$ coincides with $r_\scC^*(f)$.
\end{proof}

\begin{corollary}
\label{st:localisation thm for non-Kan}
Let $R \colon \sSet \to \Kan \subset \sSet$ be a fibrant replacement functor with natural weak equivalence $\rho \colon 1_\sSet \to R$.
Then, the composition
\begin{equation}
\begin{tikzcd}[column sep=1.5cm]
	\gamma'_\scC \colon N \Fun(\scC, \sSet) \ar[r, "{N(R \circ (-))}"]
	& N \Fun(\scC, \Kan) \ar[r, "\gamma_\scC"]
	& \scFun(N\scC, \scS)
\end{tikzcd}
\end{equation}
exhibits $\scFun(N\scC, \scS)$ as the $\infty$-categorical localisation of $\Fun(\scC, \sSet)$ at the objectwise weak equivalences.
\end{corollary}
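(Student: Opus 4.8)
The plan is to deduce this from Theorem~\ref{st:localisation theorem} by checking that postcomposition with the fibrant replacement functor $R$ is inverted by $\infty$-categorical localisation. Write $\bar R \coloneqq R \circ (-) \colon \Fun(\scC,\sSet) \to \Fun(\scC,\Kan)$ for the functor appearing in the statement, let $j \colon \Fun(\scC,\Kan) \hookrightarrow \Fun(\scC,\sSet)$ be the inclusion, and let $W$ denote the class of objectwise weak equivalences in either functor category. The first step is to verify that $\bar R$ preserves objectwise weak equivalences: if $f \colon F \to G$ is such a morphism then, for each $c \in \scC$, naturality of $\rho$ provides a commutative square with sides $f_c$, $R(f_c)$, $\rho_{F(c)}$ and $\rho_{G(c)}$, and since $\rho$ is an objectwise weak equivalence the two-out-of-three property forces $R(f_c)$ to be a weak equivalence. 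Hence both $N\bar R$ and $Nj$ send $W$ to $W$ and therefore descend to functors between the $\infty$-categorical localisations, which I denote $\bbR(N\bar R) \colon L_W N\Fun(\scC,\sSet) \to L_W N\Fun(\scC,\Kan)$ and $\bbR(Nj) \colon L_W N\Fun(\scC,\Kan) \to L_W N\Fun(\scC,\sSet)$.

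Next I would show that $\bbR(N\bar R)$ and $\bbR(Nj)$ are mutually inverse equivalences. The transformation $\rho$ restricts to a natural transformation $1 \Rightarrow \bar R \circ j$ between endofunctors of $\Fun(\scC,\Kan)$ and also furnishes the natural transformation $1 \Rightarrow j \circ \bar R$ between endofunctors of $\Fun(\scC,\sSet)$, with all components in $W$ in both cases. A natural transformation of functors of $\infty$-categories whose components are equivalences is itself a natural equivalence; applying this after localisation, and using that a $W$-inverting functor factors essentially uniquely through the localisation, one obtains $\bbR(N\bar R) \circ \bbR(Nj) \simeq \id$ and $\bbR(Nj) \circ \bbR(N\bar R) \simeq \id$. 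Hence $\bbR(N\bar R)$ is an equivalence of $\infty$-categories.

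It then remains to combine this with Theorem~\ref{st:localisation theorem}. By Proposition~\ref{st:gamma_C preserves and detects weqs} the functor $\gamma_\scC$ carries objectwise weak equivalences to equivalences in $\scFun(N\scC,\scS)$, hence so does $\gamma'_\scC = \gamma_\scC \circ N\bar R$; and the functor it induces on localisations is the composite $\bbR\gamma_\scC \circ \bbR(N\bar R) \colon L_W N\Fun(\scC,\sSet) \to L_W N\Fun(\scC,\Kan) \to \scFun(N\scC,\scS)$, in which $\bbR\gamma_\scC$ is the equivalence provided by Theorem~\ref{st:localisation theorem}. A composite of equivalences is an equivalence, which is precisely the assertion that $\gamma'_\scC$ exhibits $\scFun(N\scC,\scS)$ as the $\infty$-categorical localisation of $\Fun(\scC,\sSet)$ at the objectwise weak equivalences. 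The only point that needs genuine care is the middle step---that a fibrant replacement functor becomes invertible after $\infty$-categorical localisation---and this is forced by the presence of the objectwise weak equivalence $\rho$ on both $\Fun(\scC,\sSet)$ and $\Fun(\scC,\Kan)$ together with the universal property of the localisation.
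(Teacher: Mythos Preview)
Your proof is correct and follows the same overall strategy as the paper: show that postcomposition with $R$ induces an equivalence on $\infty$-categorical localisations, then compose with the equivalence $\bbR\gamma_\scC$ from Theorem~\ref{st:localisation theorem}.

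The difference lies in how the first step is justified. The paper simply invokes \cite[Thm.~7.5.18]{Cisinski:HCats_HoAlg}, which states that for any $\infty$-category with weak equivalences and fibrations the inclusion of the full $\infty$-subcategory of fibrant objects induces an equivalence on localisations; since $N\Fun(\scC,\Kan)$ is precisely the $\infty$-category of fibrant objects in $N\Fun(\scC,\sSet)$ (Remark~\ref{rmk:N(sSet^C) as ooCat with weqs and fibs}), this yields the required equivalence immediately. You instead give a direct, elementary argument: the natural weak equivalence $\rho$ furnishes natural transformations $1 \Rightarrow \bar R \circ j$ and $1 \Rightarrow j \circ \bar R$ with components in $W$, and these become natural equivalences after localisation, exhibiting $\bbR(N\bar R)$ and $\bbR(Nj)$ as mutual inverses. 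Your argument is self-contained and avoids the external citation; the paper's is shorter and places the result in the broader context of Cisinski's localisation theory for $\infty$-categories of fibrant objects. Both are valid, and indeed your argument is essentially the core of what goes into proving the cited theorem in this special case.
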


\begin{proof}
This follows readily from~\cite[Thm.~7.5.18]{Cisinski:HCats_HoAlg} and the fact that, viewing $N \Fun(\scC, \sSet)$ as an $\infty$-category with weak equivalences and fibrations, the $\infty$-subcategory $N \Fun(\scC, \Kan) \subset N \Fun(\scC, \sSet)$ is precisely the underlying $\infty$-category of fibrant objects (see Remark~\ref{rmk:N(sSet^C) as ooCat with weqs and fibs}).
In particular, the functor $N(R \circ (-))$ induces an equivalence on the level of localisations.
\end{proof}

\begin{remark}
Recall that the nerve functor $N \colon \Cat \to \sSet$ is fully faithful as a functor of 1-categories (this holds true within any universe).
We can rephrase Theorem~\ref{st:gamma_C = gamma o N o dashv} by saying that there is a commutative square of ($\rmV$-small) simplicial sets
\begin{equation}
\label{eq:[L,Fun(NC-)] = 0}
\begin{tikzcd}[column sep=1cm, row sep=1cm]
	N\Fun(\scC, \Kan) \ar[d, "\gamma_\scC"'] \ar[r, "\cong"]
	& \scFun(N\scC, N\Kan) \ar[d, "\gamma_*"]
	\\
	\scFun(N\scC, \scS) \ar[r, equal]
	& \scFun(N\scC, \scS)
\end{tikzcd}
\end{equation}
Theorem~\ref{st:localisation theorem} implies that $\gamma_\scC$ exhibits the localisation of $\Fun(\scC, \Kan)$ at the objectwise weak equivalences.
Applying this in the case where $\scC = *$ and using Theorem~\ref{st:gamma_C = gamma o N o dashv} we also obtain that $\gamma \colon N\Kan \to \scS$ exhibits the $\infty$-categorical localisation of $\Kan$ at the weak equivalences in the Kan-Quillen model structure.
In light of this, we can now interpret diagram~\eqref{eq:[L,Fun(NC-)] = 0} as an explicit manifestation of~\cite[Prop.~7.9.2]{Cisinski:HCats_HoAlg}, i.e.~the fact that localisation commutes with forming functor $\infty$-categories.
\qen
\end{remark}

\section{Computational properties of the localisation functor}
\label{sec:gamma_C in computations}

In this section we exhibit some of the pleasant computational properties of the localisation functor $\gamma_\scC$:
we show that, for $F, G \in \Fun(\scC, \Kan)$, the $\infty$-categorical mapping space between $\gamma_\scC F$ and $\gamma_\scC G$ in $\scFun(N\scC, \scS)$ can be computed as the model-categorical mapping space.
The same holds true for the internal hom (or exponential) in $\scFun(N\scC, \scS)$.
Moreover, we show that the left and right ($\infty$-categorical) Kan extensions of $\scS$-valued functors can be computed by the left and right homotopy Kan extensions of $\sSet$-valued functors.
These insights themselves are not new (see, for instance,~\cite[Thm.~4.2.4.1]{Lurie:HTT}), but here we are able to derive them in a particularly straightforward fashion, building on~\cite[Cor.~5.4.7]{Cisinski:HCats_HoAlg}, the results of this paper so far and manipulations of model-categorical mapping spaces.
Throughout this section, let $\scC$ be a $\rmU$-small category.
Recall that we denote the simplicial hom sets in a simplicially enriched category $\scM$ by $\ul{\scM}(-,-)$.

\begin{proposition}
\label{st:mapping spaces (gamma_C F, gamma_cG)}
Let $F, G \colon \scC \to \Kan$.
There exist canonical equivalences in $\scS$,
\begin{equation}
	\scFun(N\scC, \scS)(\gamma_\scC F, \gamma_\scC G)
	\simeq \ul{\sSet_{/N\scC}}(r_\scC^*F, r_\scC^*G)
	\simeq \ul{\Fun(\scC, \sSet)} (r_{\scC!} r_\scC^* F, G)\,,
\end{equation}
and a further equivalence
\begin{equation}
	\ul{\Fun(\scC, \sSet)} (r_{\scC!} r_\scC^* F, G)
	\simeq \ul{\Fun(\scC, \sSet)} (Q F, G)
\end{equation}
where $QF$ is any functorial cofibrant replacement of $F$ in the projective model structure on $\Fun(\scC, \sSet)$.
These equivalences are natural in $F$ and $G$.
\end{proposition}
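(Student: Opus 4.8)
The plan is to establish the three listed equivalences one at a time, working upward from the bottom of the list, and to track naturality throughout. The recurring mechanism is that the covariant model structure on $\sSet_{/N\scC}$ and the projective model structure on $\Fun(\scC, \sSet)$ are both simplicial (Theorems~\ref{st:covariant MoStr} and~\ref{st:proj/inj MoStr on sSet^C}), so a simplicial hom set from a cofibrant object to a fibrant object is a Kan complex modelling the corresponding homotopy mapping space. The relevant (co)fibrancy facts are: in $\sSet_{/N\scC}$ every object is cofibrant (cofibrations are monomorphisms), while $r_\scC^* F \to N\scC$ and $r_\scC^* G \to N\scC$ are left fibrations (Corollary~\ref{st:LFib classified by gamma_C(X)}) hence fibrant; in $\Fun(\scC, \sSet)$ the diagrams $F$ and $G$ are projectively fibrant since they are $\Kan$-valued, and $r_{\scC!} r_\scC^* F$ is projectively cofibrant because $r_{\scC!}$ is left Quillen (Theorem~\ref{st:r_! r^* QEq}) and $r_\scC^* F$ is cofibrant.

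For the last equivalence I would argue that $r_{\scC!} r_\scC^* F$ is a functorial cofibrant replacement of $F$: since $r_{\scC!} \dashv r_\scC^*$ is a Quillen equivalence, $r_\scC^* F$ is cofibrant and $F$ is fibrant, the counit $r_{\scC!} r_\scC^* F \to F$ coincides with the derived counit and is therefore a projective weak equivalence, and its source is cofibrant as just noted. Any two cofibrant replacements of $F$ are linked by a weak equivalence of cofibrant objects over $F$, and $\ul{\Fun(\scC, \sSet)}(-, G)$ takes such a weak equivalence to a weak equivalence of Kan complexes since $G$ is fibrant; this gives $\ul{\Fun(\scC, \sSet)}(r_{\scC!} r_\scC^* F, G) \simeq \ul{\Fun(\scC, \sSet)}(QF, G)$ for any functorial cofibrant replacement $Q$, naturally in $F$ and $G$.

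For the middle equivalence I would invoke the derived adjunction attached to a Quillen pair: for cofibrant $A \in \sSet_{/N\scC}$ and fibrant $H \in \Fun(\scC, \sSet)$ there is a natural equivalence $\bbR\Map_{\Fun(\scC, \sSet)}(r_{\scC!} A, H) \simeq \bbR\Map_{\sSet_{/N\scC}}(A, r_\scC^* H)$ (see~\cite{Hirschhorn:MoCats}). Specialising to $A = r_\scC^* F$ and $H = G$ and replacing the homotopy mapping spaces by the simplicial hom sets as above yields $\ul{\Fun(\scC, \sSet)}(r_{\scC!} r_\scC^* F, G) \simeq \ul{\sSet_{/N\scC}}(r_\scC^* F, r_\scC^* G)$. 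For the first equivalence I would appeal to Theorem~\ref{st:localisation theorem}: $\gamma_\scC$ exhibits $\scFun(N\scC, \scS)$ as $L_\sfW N\Fun(\scC, \Kan)$, and—the projective model structure being simplicial and $F, G$ fibrant—mapping spaces in this localisation are modelled by $\ul{\Fun(\scC, \sSet)}(QF, G)$; composing with the previous two equivalences produces $\scFun(N\scC, \scS)(\gamma_\scC F, \gamma_\scC G) \simeq \ul{\sSet_{/N\scC}}(r_\scC^* F, r_\scC^* G)$. Alternatively one can read the first equivalence off Corollary~\ref{st:LFib classified by gamma_C(X)} together with the fact that the simplicial covariant model structure on $\sSet_{/N\scC}$ presents $\scFun(N\scC, \scS)$, \cite[Cor.~5.4.7, Thm.~5.4.5]{Cisinski:HCats_HoAlg}. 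Naturality in $F$ and $G$ of every equivalence is inherited from naturality of the counit, of the classifying correspondence of Corollary~\ref{st:LFib classified by gamma_C(X)}, of functorial cofibrant replacement, and of the derived adjunction.

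I expect the first equivalence to be the delicate point: one needs not just that $\Ho(\sSet_{/N\scC})$ with the covariant structure is equivalent to $\rmh \scFun(N\scC, \scS)$, but that $\gamma_\scC F$ genuinely corresponds to the object $r_\scC^* F$ at the $\infty$-categorical level, which is why it seems cleanest to route this equivalence through the $\infty$-categorical statement of Theorem~\ref{st:localisation theorem} and the comparison $r_{\scC!} \dashv r_\scC^*$ rather than through an equivalence of homotopy categories alone.
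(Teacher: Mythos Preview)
Your proposal is correct, and in fact it contains the paper's proof as your ``alternative'' route for the first equivalence. The paper proceeds exactly via Corollary~\ref{st:LFib classified by gamma_C(X)} together with~\cite[Cor.~5.4.7]{Cisinski:HCats_HoAlg} for the first equivalence, then uses that $r_{\scC!} \dashv r_\scC^*$ is a simplicial Quillen equivalence for the second (as a strict adjunction isomorphism of simplicial homs) and for the third observes, as you do, that the counit $r_{\scC!} r_\scC^* F \to F$ is a cofibrant replacement and compares it to an arbitrary functorial one via a zig-zag $r_{\scC!} r_\scC^* F \leftarrow Q(r_{\scC!} r_\scC^* F) \to QF$.

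The one point of divergence is emphasis: you present the route through Theorem~\ref{st:localisation theorem} (plus the identification of mapping spaces in the localisation of a simplicial model category with derived simplicial homs) as the cleanest way to get the first equivalence, and the direct argument as an alternative. The paper makes the opposite choice, explicitly remarking that the result ``can be derived from general arguments about localisations along the lines of~\cite[Sec.~7.10]{Cisinski:HCats_HoAlg}'' but opting for the direct proof instead. Your primary route is not circular (Theorem~\ref{st:localisation theorem} does not rely on this proposition), but it does import the comparison of mapping spaces in $L_\sfW$ with simplicial homs, which is heavier machinery than the single citation~\cite[Cor.~5.4.7]{Cisinski:HCats_HoAlg} the paper uses. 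The direct argument also makes the naturality and the link to the specific left fibration $r_\scC^* F \to N\scC$ more transparent, which is why the paper prefers it.
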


This can be derived from general arguments about localisations along the lines of~\cite[Sec.~7.10]{Cisinski:HCats_HoAlg}, but we give a more direct proof:

\begin{proof}
The first equivalence follows directly from Corollary~\ref{st:LFib classified by gamma_C(X)} and~\cite[Cor.~5.4.7]{Cisinski:HCats_HoAlg}.
The second equivalence can be obtained, for instance, by using that $r_{\scC!} \dashv r_\scC^*$ is a simplicial Quillen equivalence:
since $F$ is fibrant and $r_\scC^*F$ is cofibrant, the derived counit of this adjunction provides an equivalence $r_{\scC!} r_\scC^*F \to F$.
It thus exhibits $r_{\scC!} r_\scC^*F$ as a cofibrant replacement of $F$ in the projective model structure on $\Fun(\scC, \sSet)$.
For any other functorial cofibrant replacement $QF$, there is a natural zig-zag of weak equivalences
\begin{equation}
\begin{tikzcd}
	r_{\scC!} r_\scC^*F
	& Q(r_{\scC!} r_\scC^*F) \ar[l, "\sim"'] \ar[r, "\sim"]
	& QF
\end{tikzcd}
\end{equation}
between cofibrant objects, which the functor $\ul{\Fun(\scC, \sSet)}(-,G)$
sends to an equivalence in $\scS$.
\end{proof}

A particular choice of cofibrant replacement functor for the projective model structure on $\Fun(\scC, \sSet)$ is given by  Dugger's cofibrant replacement functor~\cite[Sec.~2.6]{Dugger:Universal_HoThys}; we denote this functor by $Q^p$ (note that since here we use simplicial pre\textit{co}sheaves on $\scC$ instead of simplicial presheaves, we need to introduce an additional $(-)^\opp$ in Dugger's conventions).
Explicitly, we have
\begin{equation}
	(Q^p F)_n(c)
	= \coprod_{c_0 \to \cdots \to c_n \to c} F_n(c_0)\,,
\end{equation}
where the coproduct is taken over chains of morphisms in $\scC$.
The functor $Q^p$ comes with a natural weak equivalence to the identity functor on $\Fun(\scC, \sSet)$, which we denote by $q^p$.

\begin{corollary}
\label{st:exponential in scS^NC}
For each $F, G \in \Fun(\scC, \Kan)$, there is an equivalence, natural in $F$ and $G$,
\begin{equation}
	(\gamma_\scC G)^{\gamma_\scC F}
	\simeq \gamma_\scC(G^{Q^p F})\,.
\end{equation}
\end{corollary}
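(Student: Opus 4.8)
The plan is to show that $\gamma_\scC(G^{Q^pF})$ satisfies the universal property of the internal hom $(\gamma_\scC G)^{\gamma_\scC F}$ in $\scFun(N\scC,\scS)$. Since $\gamma_\scC$ exhibits $\scFun(N\scC,\scS)$ as a localisation of $\Fun(\scC,\Kan)$ (Theorem~\ref{st:localisation theorem}) it is essentially surjective, so by the Yoneda lemma in $\scFun(N\scC,\scS)$ it suffices to produce a natural equivalence of mapping spaces
\[
	\scFun(N\scC,\scS)\big(\gamma_\scC H,\,\gamma_\scC(G^{Q^pF})\big)
	\;\simeq\;
	\scFun(N\scC,\scS)\big(\gamma_\scC H,\,(\gamma_\scC G)^{\gamma_\scC F}\big)
\]
as $H$ ranges over $\Fun(\scC,\Kan)$. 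First I would note that $G^{Q^pF}$ again lies in $\Fun(\scC,\Kan)$: its value $G^{Q^pF}(c)=\ul{\Fun(\scC,\sSet)}(h^c\times Q^pF,\,G)$ (with $h^c=\scC(c,-)$ the corepresentable) is an end of the Kan complexes $\Map(-,G(d))$, hence a Kan complex. Thus Proposition~\ref{st:mapping spaces (gamma_C F, gamma_cG)} applies on both sides. Using the cartesian-closed adjunction $(-)\times Q^pF\dashv(-)^{Q^pF}$ in the ordinary category $\Fun(\scC,\sSet)$, the left-hand side becomes, naturally, $\ul{\Fun(\scC,\sSet)}(Q^pH\times Q^pF,\,G)$; and, using that $\gamma_\scC$ is left exact (Proposition~\ref{st:gamma_C is Lex}) and so preserves binary products (whence $\gamma_\scC H\times\gamma_\scC F\simeq\gamma_\scC(H\times F)$ with $H\times F\in\Fun(\scC,\Kan)$) together with the defining adjunction of the internal hom, the right-hand side becomes, naturally, $\ul{\Fun(\scC,\sSet)}(Q^p(H\times F),\,G)$.

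So everything reduces to a natural equivalence $\ul{\Fun(\scC,\sSet)}(Q^pH\times Q^pF,\,G)\simeq\ul{\Fun(\scC,\sSet)}(Q^p(H\times F),\,G)$ for $G$ objectwise a Kan complex. I would first record that, since a product of simplicial sets preserves weak homotopy equivalences in each variable, $Q^pH\times Q^pF$ and $Q^p(H\times F)$ are naturally objectwise weakly equivalent to $H\times F$, compatibly. Rewriting the first simplicial set as $\ul{\Fun(\scC,\sSet)}(Q^pH,\,G^{Q^pF})$ — a mapping space out of a projectively cofibrant into a projectively fibrant object — identifies it with the derived mapping space $\Fun(\scC,\sSet)\mapsto\scS$ from $H$ to $G^{Q^pF}$ in the projective model structure, and the second with the derived mapping space from $H\times F$ to $G$. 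Granting that $G^{Q^pF}$ presents the derived internal hom $\bbR\,\underline{\Hom}_{\Fun(\scC,\sSet)}(F,G)$, the desired equivalence then follows from the two-variable adjunction applied on homotopy categories. Hence the heart of the proof is the claim that $G^{Q^pF}$ computes the derived internal hom.

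This last point is the main obstacle, because the projective model structure on $\Fun(\scC,\sSet)$ is \emph{not} a cartesian model structure — a product of two projectively cofibrant objects need not be projectively cofibrant — so $(-)^{Q^pF}$ fails to be right Quillen and the claim is not formal. I would handle it by comparing with the injective model structure (Theorem~\ref{st:proj/inj MoStr on sSet^C}), which \emph{is} cartesian: there $\bbR\,\underline{\Hom}_{\Fun(\scC,\sSet)}(F,G)\simeq\widehat{G}^F$ for an injectively fibrant replacement $\widehat{G}$ of $G$. One then has natural comparison maps $G^{Q^pF}\to\widehat{G}^{Q^pF}\leftarrow\widehat{G}^F$; the second is a weak equivalence because $q^p_F\colon Q^pF\to F$ is a weak equivalence and $\underline{\Hom}(-,\widehat{G})$ is homotopical (injectively right Quillen, all objects being injectively cofibrant). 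For the first I would use the symmetry $G^{Q^pF}(c)=\ul{\Fun(\scC,\sSet)}(Q^pF,\,G^{h^c})$, together with projective cofibrancy of $Q^pF$ and projective fibrancy of $G^{h^c}$ and $\widehat{G}^{h^c}$ (both objectwise Kan, being ends of Kan complexes), to reduce to the objectwise statement that $G^{h^c}\to\widehat{G}^{h^c}$ is a weak equivalence for representable exponents $h^c$; this is checked by computing these mapping spaces explicitly as homotopy limits of the values of $G$, resp.\ $\widehat{G}$, over the relevant category of elements, where objectwise weak equivalences between objectwise-Kan diagrams are preserved. Assembling these equivalences, verifying naturality in $F$ and $G$, and invoking the Yoneda lemma as above then yields $(\gamma_\scC G)^{\gamma_\scC F}\simeq\gamma_\scC(G^{Q^pF})$.
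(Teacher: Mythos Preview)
Your overall strategy matches the paper's exactly: reduce by Yoneda using that $\gamma_\scC$ is a localisation (hence essentially surjective), use left-exactness of $\gamma_\scC$ to identify $\gamma_\scC H\times\gamma_\scC F\simeq\gamma_\scC(H\times F)$, and invoke Proposition~\ref{st:mapping spaces (gamma_C F, gamma_cG)} on both ends. You also correctly identify the crux---comparing $\ul{\Fun(\scC,\sSet)}(Q^p(H\times F),G)$ with $\ul{\Fun(\scC,\sSet)}(Q^pH\times Q^pF,G)$---and correctly flag that this is nontrivial because the projective model structure is not cartesian.

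Where you diverge from the paper, however, there is a genuine gap. Your detour through derived internal homs reduces $G^{Q^pF}\to\widehat G^{Q^pF}$ to the claim that $G^{h^c}\to\widehat G^{h^c}$ is an objectwise weak equivalence, and you then assert this holds because $(G^{h^c})(d)=\ul{\Fun(\scC,\sSet)}(h^c\times h^d,G)$ is ``a homotopy limit of values of $G$''. But this end is a \emph{strict} limit over the category of elements of $h^c\times h^d$, and strict limits of objectwise weak equivalences between objectwise-Kan diagrams are not weak equivalences in general. Equivalently: $h^c\times h^d$ need not be projectively cofibrant (it fails to be a coproduct of corepresentables whenever $\scC$ lacks the relevant coproduct), so $\ul{\Fun(\scC,\sSet)}(h^c\times h^d,-)$ does not send weak equivalences between projectively fibrant objects to weak equivalences. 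Your zig-zag therefore does not close as written.

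The paper avoids this detour entirely. It exploits the explicit formula for Dugger's $Q^p$ to write down a canonical natural map $Q^p(A\times F)\to Q^pA\times Q^pF$ directly---induced in each simplicial level by the functor of slice categories $\scC_{/(A_n\times F_n)}\to\scC_{/A_n}\times\scC_{/F_n}$---and then observes that in the commuting triangle over $A\times F$ the two augmentations $q^p_{A\times F}$ and $q^p_A\times q^p_F$ are weak equivalences (the latter because the injective model structure is cartesian), so the comparison map is one too by two-out-of-three. This furnishes the required middle equivalence without ever passing to $G^{h^c}$ or an injective fibrant replacement of $G$. You even record the two-out-of-three ingredients in passing (``naturally objectwise weakly equivalent to $H\times F$, compatibly''), but then abandon them for the harder route; the missing idea is precisely to \emph{construct} the comparison map from the concrete shape of $Q^p$ and use it directly.
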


\begin{proof}
By the Yoneda Lemma, it suffices to show that there is a natural equivalence
\begin{equation}
	\scFun(N\scC, \scS) \big( -, (\gamma_\scC G)^{\gamma_\scC F} \big)
	\simeq \scFun(N\scC, \scS) \big( -, \gamma_\scC (G^{Q^p F}) \big)
\end{equation}
of functors \smash{$\scFun(N\scC, \scS)^\opp \longrightarrow \widehat{\scS}$} (where \smash{$\widehat{\scS}$} is the $\infty$-category of $\rmV$-small spaces).
By Theorem~\ref{st:localisation theorem} $\gamma_\scC$ is a localisation functor.
It thus induces an equivalence~\cite[Def.~7.1.2]{Cisinski:HCats_HoAlg}
\begin{equation}
	\gamma_\scC^* \colon \scFun \big( \scFun(N\scC, \scS)^\opp, \widehat{\scS} \big)
	\longrightarrow \scFun_\sfW \big( N\Fun(\scC, \Kan)^\opp, \widehat{\scS} \big)\,,
\end{equation}
where on the right-hand side
\begin{equation}
	\scFun_\sfW \big( N\Fun(\scC, \Kan)^\opp, \widehat{\scS} \big)
	\subset \scFun \big( N\Fun(\scC, \Kan)^\opp, \widehat{\scS} \big)
\end{equation}
denotes the full $\infty$-subcategory on those functors which send weak equivalences to equivalences.
Therefore, it suffices to show that there is an equivalence of mapping spaces
\begin{equation}
	\scFun(N\scC, \scS) \big( \gamma_\scC A, (\gamma_\scC G)^{\gamma_\scC F} \big)
	\simeq \scFun(N\scC, \scS) \big( \gamma_\scC A, \gamma_\scC (G^{Q^p F}) \big)\,,
\end{equation}
natural in $A \in \Fun(\scC, \Kan)$.

Since $\gamma_\scC$ is left exact (Proposition~\ref{st:gamma_C is Lex}) and both $A$ and $F$ are fibrant objects in $N\Fun(\scC, \Kan)$ (seen as an $\infty$-category of fibrant objects---see Remark~\ref{rmk:N(sSet^C) as ooCat with weqs and fibs}), we have an equivalence
\begin{equation}
	\scFun(N\scC, \scS)(\gamma_\scC A, (\gamma_\scC G)^{\gamma_\scC F})
	\simeq \scFun(N\scC, \scS)(\gamma_\scC (A \times F), \gamma_\scC G)\,.
\end{equation}
By Proposition~\ref{st:mapping spaces (gamma_C F, gamma_cG)} there is a further equivalence
\begin{equation}
	\scFun(N\scC, \scS)(\gamma_\scC (A \times F), \gamma_\scC G)
	\simeq \ul{\Fun(\scC, \sSet)}\big( Q^p (A \times F), G \big)\,.
\end{equation}
For this choice of cofibrant replacement functor there exists a canonical commutative diagram
\begin{equation}
\begin{tikzcd}
	Q^p(A \times F) \ar[rr] \ar[dr, "\sim", "q^p_{A \times F}"']
	& & Q^p A \times Q^p F \ar[dl, "\sim"', "q^p_A \times q^p_F"]
	\\
	& A \times F &
\end{tikzcd}
\end{equation}
whose horizontal arrow stems from the canonical functors of slice categories
\begin{equation}
	\scC_{/A_n \times F_n} \longrightarrow \scC_{/A_n} \times \scC_{/F_n}
\end{equation}
for each $n \in \NN_0$.
The right-hand arrow is a weak equivalence since $q^p_A \colon Q^p A \to A$ and $q^p_F \colon Q^p F \to F$ are so and the injective model structure on $\Fun(\scC, \sSet)$ is cartesian monoidal~\cite[Prop.~4.51]{Barwick:Enriched_B-Loc} (it has the same weak equivalences as the projective model structure).
It follows that the top morphism is a weak equivalence as well.
Thus, we have natural equivalences
\begin{align}
	\scFun(N\scC, \scS)(\gamma_\scC A, (\gamma_\scC G)^{\gamma_\scC F})
	&\simeq \ul{\Fun(\scC, \sSet)}\big( Q^p (A \times F), G \big)
	\\
	&\simeq \ul{\Fun(\scC, \sSet)} (Q^p A, G^{Q^p F})
	\\
	&\simeq \scFun(N\scC, \scS) \big( \gamma_\scC A, \gamma_\scC (G^{Q^p F}) \big)\,,
\end{align}
where the last step is again Proposition~\ref{st:mapping spaces (gamma_C F, gamma_cG)}.
\end{proof}

Let $\scD$ be a second $\rmU$-small category.
We now compare $\infty$-categorical and homotopy Kan extensions along functors $\pi \colon \scC \to \scD$.

\begin{proposition}
\label{st:ooRan via hoRan}
Let $\pi \colon \scC \to \scD$ be a functor, and let $X \colon \scC \to \Kan$ be a diagram.
The $\infty$-categorical right Kan extension $(N\pi)_* \gamma_\scC(X)$ of $\gamma_\scC(X)$ classifies the left fibration $r_\scD^* \hoRan_\pi (X) \longrightarrow N\scD$, where $\hoRan_\pi(X)$ denotes the homotopy right Kan extension of $X$ along $\pi$.
Equivalently, there is a canonical equivalence
\begin{equation}
	(N\pi)_* \gamma_\scC(X) \simeq \gamma_\scD (\hoRan_\pi X)\,.
\end{equation}
\end{proposition}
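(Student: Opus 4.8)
The plan is to establish the displayed equivalence $(N\pi)_* \gamma_\scC(X) \simeq \gamma_\scD(\hoRan_\pi X)$ first, and then deduce the left-fibration statement by applying Corollary~\ref{st:LFib classified by gamma_C(X)} to $\hoRan_\pi X$, which we may and do take to be objectwise a Kan complex (homotopy limits of Kan complexes are Kan complexes, so $\hoRan_\pi X \colon \scD \to \Kan$). Since $\scS$ is complete, the pointwise right Kan extension $(N\pi)_* \colon \scFun(N\scC, \scS) \to \scFun(N\scD, \scS)$ exists and is right adjoint to the restriction functor $(N\pi)^*$. Hence $\gamma_\scD(\hoRan_\pi X)$ will coincide with $(N\pi)_* \gamma_\scC(X)$ as soon as it corepresents the same functor $\scFun(N\scD, \scS)^\opp \to \widehat{\scS}$, namely $G \mapsto \scFun(N\scC, \scS)\big((N\pi)^* G,\, \gamma_\scC X\big)$. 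By Theorem~\ref{st:localisation theorem} the functor $\gamma_\scD$ exhibits $\scFun(N\scD, \scS)$ as a localisation, hence is essentially surjective, so (arguing as in the proof of Corollary~\ref{st:exponential in scS^NC}) it suffices to produce a natural equivalence
\[
	\scFun(N\scD, \scS)\big( \gamma_\scD B,\, \gamma_\scD(\hoRan_\pi X) \big)
	\simeq \scFun(N\scC, \scS)\big( (N\pi)^* \gamma_\scD B,\, \gamma_\scC X \big)\,,
\]
natural in $B \in \Fun(\scD, \Kan)$.

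Next I would compute both sides using Proposition~\ref{st:mapping spaces (gamma_C F, gamma_cG)}. For the right-hand side, Corollary~\ref{st:precomposition intertwines gammas} gives $(N\pi)^* \gamma_\scD B = \gamma_\scC(\pi^* B)$ with $\pi^* B = B \circ \pi$, and then Proposition~\ref{st:mapping spaces (gamma_C F, gamma_cG)} identifies the mapping space with the derived model-categorical mapping space $\ul{\Fun(\scC, \sSet)}(r_{\scC!} r_\scC^* \pi^* B, X)$, i.e.\ with $\bbR\ul{\Fun(\scC, \sSet)}(\pi^* B, X)$, using that $r_{\scC!} r_\scC^* \pi^* B$ is a projective cofibrant replacement of the fibrant object $\pi^* B$. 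For the left-hand side, Proposition~\ref{st:mapping spaces (gamma_C F, gamma_cG)} applied to $B, \hoRan_\pi X \in \Fun(\scD, \Kan)$ yields $\bbR\ul{\Fun(\scD, \sSet)}(B, \hoRan_\pi X)$. It then remains to invoke the derived adjunction: $\pi^* \dashv \pi_*$ is a Quillen adjunction for model structures with the objectwise weak equivalences (for instance $\pi^*$ is left Quillen from the projective to the injective structure, since it preserves objectwise monomorphisms and objectwise weak equivalences, so $\bbR\pi_* = \hoRan_\pi$), and since $\pi^*$ preserves all weak equivalences its left derived functor is $\pi^*$ itself. The resulting natural equivalence of derived mapping spaces
\[
	\bbR\ul{\Fun(\scD, \sSet)}(B, \hoRan_\pi X)
	\simeq \bbR\ul{\Fun(\scC, \sSet)}(\pi^* B, X)
\]
links the two computations; assembling the chain and tracking naturality in $B$ gives the equivalence $(N\pi)_* \gamma_\scC(X) \simeq \gamma_\scD(\hoRan_\pi X)$, and Corollary~\ref{st:LFib classified by gamma_C(X)} then says $\gamma_\scD(\hoRan_\pi X)$ classifies $r_\scD^*(\hoRan_\pi X) \to N\scD$.

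The main obstacle I expect is the model-theoretic bookkeeping in the last step: ensuring that $\hoRan_\pi X$ — defined via $\bbR\pi_*$, or equivalently via pointwise Bousfield–Kan homotopy limits over the comma categories $d/\pi$ — is genuinely computed by $\pi_*$ applied to $X$ (using objectwise fibrancy of $X$, upgrading to an injectively fibrant replacement if the chosen convention demands it), is objectwise a Kan complex so that Proposition~\ref{st:mapping spaces (gamma_C F, gamma_cG)} applies to it, and sits inside a bona fide Quillen adjunction with $\pi^*$. An alternative, more hands-on route circumvents this: evaluate at each $d \in \scD$ using the pointwise formula $((N\pi)_* \gamma_\scC X)(d) \simeq \lim^\scS_{N(d/\pi)}\big( \gamma_\scC X \circ N\mathrm{pr} \big)$, rewrite $\gamma_\scC X \circ N\mathrm{pr} = \gamma_{d/\pi}(\mathrm{pr}^* X)$ by Corollary~\ref{st:precomposition intertwines gammas}, apply Lemma~\ref{st:gamma and holims} to obtain $\gamma\big( \holim_{d/\pi} \mathrm{pr}^* X \big) = (\gamma_\scD \hoRan_\pi X)(d)$, and then promote this objectwise equivalence to an equivalence in $\scFun(N\scD, \scS)$ via \cite[Cor.~3.5.12]{Cisinski:HCats_HoAlg} once a canonical comparison natural transformation has been produced.
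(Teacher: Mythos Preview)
Your proposal is correct and follows essentially the same route as the paper: reduce to corepresenting the functor $G \mapsto \scFun(N\scC,\scS)((N\pi)^*G,\gamma_\scC X)$, use the localisation Theorem~\ref{st:localisation theorem} to test only against $G = \gamma_\scD B$, translate both sides to derived model-categorical mapping spaces via Proposition~\ref{st:mapping spaces (gamma_C F, gamma_cG)} and Corollary~\ref{st:precomposition intertwines gammas}, and then invoke the derived adjunction $\pi^* \dashv \Ran_\pi$. The paper resolves exactly the bookkeeping obstacle you flag by replacing $X$ at the outset with an injectively fibrant approximation (harmless since both $\hoRan_\pi$ and $\gamma_\scD$ preserve weak equivalences), so that $\hoRan_\pi X \simeq \Ran_\pi X$ with $\Ran_\pi X$ injectively fibrant and the underived $\pi^* \dashv \Ran_\pi$ adjunction (for the injective model structures on both sides) can be applied directly.
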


\begin{proof}
We show that $\gamma_\scD(\hoRan_\pi X)$ represents the functor
\begin{equation}
	\scFun(N\scC, \scS)(N\pi^*(-), \gamma_\scC X)
	\colon \scFun(N\scD, \scS) \longrightarrow \widehat{\scS}\,.
\end{equation}
To that end, let $F \colon N\scD \to \scS$ be a functor.
By the same logic as in the proof of Corollary~\ref{st:exponential in scS^NC}, we may precompose the left-hand argument of $\scFun(N\scC, \scS)(-,-)$ with the localisation functor $\gamma_\scC$.
That is, we may assume that $F$ is of the form $\gamma_\scD Y$ for some functor $Y \colon \scD \to \Kan$.
Furthermore, we may assume that $X \colon \scC \to \Kan$ is fibrant in the \textit{injective} model structure on $\Fun(\scC, \sSet)$; we can always replace $X$ by an injectively fibrant approximation, up to an objectwise weak equivalence, which is preserved by both $\hoRan_\pi$ and $\gamma_\scD$ (see Proposition~\ref{st:gamma_C preserves and detects weqs}).
In particular, we then have that $\hoRan_\pi X \simeq \Ran_\pi X$, since
\begin{equation}
\begin{tikzcd}
	\pi^* : \Fun(\scD, \sSet) \ar[r, shift left=0.15cm, "\perp"' yshift=0.05cm]
	& \Fun(\scC, \sSet) : \Ran_\pi \ar[l, shift left=0.15cm]
\end{tikzcd}
\end{equation}
is a Quillen adjunction with respect to the injective model structures.

There are canonical natural equivalences of spaces
\begin{align}
	\scFun(N\scD, \scS) \big( \gamma_\scD Y, \gamma_\scD (\hoRan_\pi X) \big)
	&\overset{(1)}{\simeq} \ul{\sSet_{/N\scD}} \big( r_\scD^* Y, r_\scD^* (\Ran_\pi X) \big)
	\\
	&\overset{(2)}{\cong} \ul{\Fun(\scD, \sSet)} (r_{\scD!} r_\scD^* Y, \Ran_\pi X)
	\\
	&\overset{(3)}{\simeq} \ul{\Fun(\scD, \sSet)} (Y, \Ran_\pi X)
	\\
	&\overset{(4)}{\cong} \ul{\Fun(\scC, \sSet)} (\pi^*Y, X)
	\\
	&\overset{(5)}{\simeq} \ul{\Fun(\scC, \sSet)} (r_{\scC!} r_\scC^*\pi^*Y, X)
	\\
	&\overset{(6)}{\cong} \ul{\sSet_{/N\scC}} (r_\scC^* \pi^*Y, r_\scC^* X)
	\\
	&\overset{(7)}{\simeq} \Fun(N\scC, \scS) (\gamma_\scC \pi^*Y, \gamma_\scC X)
	\\*
	&\overset{(8)}{\simeq} \scFun(N\scC, \scS) (N\pi^* \gamma_\scC Y, \gamma_\scC X)\,.
\end{align}
Equivalence~(1) stems from Proposition~\ref{st:mapping spaces (gamma_C F, gamma_cG)}.
For step~(2), we have used that the adjunction $r_{\scC!} \dashv r_\scC^*$ is simplicial~\cite[Rmk.~4.4]{HM:Left_fibs_and_hocolims}.
Step~(3) uses that $Y$ is projectively fibrant, that $r_{\scD!} \dashv r_\scD^*$ is a Quillen equivalence between the projective and the covariant model structure~\cite[Thm.~C]{HM:Left_fibs_and_hocolims}, and that injectively fibrant objects are also projectively fibrant in $\Fun(\scD, \sSet)$.
Thus, the counit $r_{\scD!} r_\scD^* Y \to Y$ is, in particular, a weak equivalence between cofibrant objects in the injective model structure, and $\Ran_\pi X$ is injectively fibrant.
Step~(4) uses the universal property of Kan extensions.
Steps~(5), (6) and (7) use the same arguments as steps (3), (2) and (1) (in that order) with $\scC$ in place of $\scD$, and the final step~(8) is an application of Corollary~\ref{st:precomposition intertwines gammas}.
\end{proof}

\begin{proposition}
\label{st:ooLan via hoLan}
Let $\pi \colon \scC \to \scD$ and $X \colon \scC \to \Kan$ be functors.
Let $R^p_\scD$ denote a projectively fibrant replacement in $\Fun(\scD, \sSet)$
The $\infty$-categorical left Kan extension $(N\pi)_! \gamma_\scC(X)$ classifies the left fibration $r_\scD^* R^p_\scD \hoLan_\pi (X) \longrightarrow N\scD$.
Equivalently, there is a canonical equivalence
\begin{equation}
	(N\pi)_! \gamma_\scC(X) \simeq \gamma_\scD (R^p_\scD \circ \hoLan_\pi X)\,.
\end{equation}
\end{proposition}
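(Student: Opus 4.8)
The plan is to mirror the proof of Proposition~\ref{st:ooRan via hoRan}, using the universal property of $\infty$-categorical \emph{left} Kan extensions in place of that of right Kan extensions. Recall that $(N\pi)_!\gamma_\scC X$ is characterised, up to canonical equivalence, by natural equivalences $\scFun(N\scD,\scS)((N\pi)_!\gamma_\scC X, F)\simeq\scFun(N\scC,\scS)(\gamma_\scC X, N\pi^*F)$ for $F\in\scFun(N\scD,\scS)$. So by the $\infty$-categorical Yoneda lemma it suffices to show, writing $A\coloneqq R^p_\scD\hoLan_\pi X\in\Fun(\scD,\Kan)$ (note that $R^p_\scD$ makes the diagram objectwise fibrant, so $A$ is indeed $\Kan$-valued), that the functor $F\mapsto\scFun(N\scD,\scS)(\gamma_\scD A,F)$ is naturally equivalent to $F\mapsto\scFun(N\scC,\scS)(\gamma_\scC X,N\pi^*F)$. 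First I would cut down the test objects exactly as in the proof of Corollary~\ref{st:exponential in scS^NC}: since $\gamma_\scD$ is a localisation functor (Theorem~\ref{st:localisation theorem}), both functors are determined by their restrictions along $\gamma_\scD$, so it is enough to produce a natural equivalence $\scFun(N\scD,\scS)(\gamma_\scD A,\gamma_\scD Y)\simeq\scFun(N\scC,\scS)(\gamma_\scC X,N\pi^*\gamma_\scD Y)$ as $Y$ ranges over $\Fun(\scD,\Kan)$.

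Second, I would assemble this equivalence from a chain analogous to the one in Proposition~\ref{st:ooRan via hoRan}. The key structural input is that $\hoLan_\pi X=\pi_!(Q^pX)$ for a projective cofibrant replacement $Q^pX$ of $X$, and that $\pi_!\dashv\pi^*$ is a \emph{simplicial} Quillen adjunction for the projective model structures (here $\pi^*$ is right Quillen since it preserves objectwise fibrations, trivial fibrations and the $\sSet$-tensoring); hence $\pi_!(Q^pX)$ is projectively cofibrant and comes with a natural weak equivalence $\pi_!(Q^pX)\to R^p_\scD\hoLan_\pi X=A$. Then one has, with every mapping space taken between a cofibrant source and a fibrant ($\Kan$-valued) target,
\begin{align}
	\scFun(N\scD,\scS)(\gamma_\scD A,\gamma_\scD Y)
	&\simeq \ul{\Fun(\scD,\sSet)}(r_{\scD!}r_\scD^*A,Y)
	\simeq \ul{\Fun(\scD,\sSet)}(\pi_!(Q^pX),Y)\\
	&\cong \ul{\Fun(\scC,\sSet)}(Q^pX,\pi^*Y)
	\simeq \ul{\Fun(\scC,\sSet)}(r_{\scC!}r_\scC^*X,\pi^*Y)\\
	&\simeq \scFun(N\scC,\scS)(\gamma_\scC X,\gamma_\scC\pi^*Y)
	\simeq \scFun(N\scC,\scS)(\gamma_\scC X,N\pi^*\gamma_\scD Y)\,,
\end{align}
where the outermost equivalences on the first and third lines are Proposition~\ref{st:mapping spaces (gamma_C F, gamma_cG)}; the two middle equivalences (on the first and second lines) replace one projective cofibrant model of $A$, resp.\ of $X$, by another, using that $\ul{\Fun(-,\sSet)}(-,Z)$ is homotopy invariant in cofibrant inputs for $Z$ fibrant, exactly as in the proof of Proposition~\ref{st:mapping spaces (gamma_C F, gamma_cG)}; the isomorphism is the enriched adjunction $\pi_!\dashv\pi^*$; and the last step is Corollary~\ref{st:precomposition intertwines gammas}, which gives $\gamma_\scC\pi^*Y\simeq N\pi^*\gamma_\scD Y$. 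All steps are natural in $Y$. Yoneda then yields $(N\pi)_!\gamma_\scC X\simeq\gamma_\scD(R^p_\scD\hoLan_\pi X)$, and Corollary~\ref{st:LFib classified by gamma_C(X)}, applied to $R^p_\scD\hoLan_\pi X\in\Fun(\scD,\Kan)$, identifies the left fibration this classifies as $r_\scD^*(R^p_\scD\hoLan_\pi X)\to N\scD$.

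The main obstacle I expect is the bookkeeping forced by the fibrant replacement $R^p_\scD$. In contrast with the right Kan extension case---where injective fibrancy of $X$ makes $\hoRan_\pi X\simeq\Ran_\pi X$ land automatically in $\Kan$---here $\pi_!$ is \emph{left} Quillen for the projective structures, so it produces a cofibrant but not objectwise fibrant diagram, and one must interpose $R^p_\scD$ in order for $\gamma_\scD$ and Proposition~\ref{st:mapping spaces (gamma_C F, gamma_cG)} to apply. The care needed is to check that $R^p_\scD$ contributes only an objectwise weak equivalence---harmless at the level of the mapping-space manipulations above, and compatible with $\gamma_\scD$ by Proposition~\ref{st:gamma_C preserves and detects weqs}---so that the cofibrant object $\pi_!(Q^pX)$ may be used interchangeably with $r_{\scD!}r_\scD^*A$ in the second equivalence. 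A secondary, routine point to pin down is the assertion that $\pi_!\dashv\pi^*$ is a simplicial Quillen adjunction between the projective model structures.
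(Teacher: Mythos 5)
Your proof is correct and follows the same high-level strategy as the paper (reduce via Yoneda and Theorem~\ref{st:localisation theorem} to mapping spaces into objects $\gamma_\scD Y$ with $Y\in\Fun(\scD,\Kan)$, then translate through $r_\scC^*$, $r_{\scC!}$, the enriched adjunction $\pi_!\dashv\pi^*$, and Corollary~\ref{st:precomposition intertwines gammas}). The organisation of the chain of equivalences, however, is genuinely different, and in a way that is slightly more economical. You front-load the identity $\hoLan_\pi X = \pi_!(Q^p X)$ and apply the enriched adjunction directly to the projectively cofibrant object $Q^p X$; consequently every $\scD$-side mapping-space comparison you perform is between cofibrant sources (namely $r_{\scD!}r_\scD^*A$ and $\pi_!(Q^pX)$) and a projectively fibrant target $Y$, so only the projective model structure is ever invoked. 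The paper instead routes the argument through $\ul{\Fun(\scD,\sSet)}(R^p_\scD\hoLan_\pi X, Y)$, $\ul{\Fun(\scD,\sSet)}(R^p_\scD\hoLan_\pi r_{\scC!}r_\scC^*X,Y)$, and $\ul{\Fun(\scD,\sSet)}(\hoLan_\pi r_{\scC!}r_\scC^*X,Y)$, whose sources are not projectively cofibrant; to justify these intermediate comparisons, the paper additionally assumes $Y$ is injectively fibrant and uses that the injective model structure is simplicial. Both arguments are valid; yours avoids the auxiliary injective-fibrancy hypothesis and the associated cofibrant/fibrant bookkeeping at the cost of stating explicitly (as you do) that $\pi_!\dashv\pi^*$ is a simplicial Quillen adjunction for the projective structures and that $\pi_!$ preserves cofibrant objects.
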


Note that $\gamma_\scD (R^p \circ \hoLan_\pi X)$ is the image of $\hoLan_\pi X$ under the localisation functor in Corollary~\ref{st:localisation thm for non-Kan}.

\begin{proof}
As in the proof of Corollary~\ref{st:exponential in scS^NC} and Proposition~\ref{st:ooRan via hoRan}, since $\gamma_\scC$ is a localisation functor it suffices to consider mapping spaces to objects of the form $F = \gamma_\scD Y \in \scFun(N\scD, \scS)$, for $Y \in \Fun(\scD, \Kan)$.
In fact, up to weak equivalence, we may assume that $Y$ is even injectively fibrant in $\Fun(\scD, \sSet)$.
There are canonical natural equivalences (in \smash{$\widehat{\scS}$})
\begin{align}
	\scFun(N\scD, \scS)(\gamma_\scD R^p_\scD \hoLan_\pi X, \gamma_\scD Y)
	&\overset{(1)}{\simeq} \ul{\sSet_{/N\scD}} (r_\scD^* R^p_\scD \hoLan_\pi X, r_\scD^* Y)
	\\
	&\overset{(2)}{\simeq} \ul{\Fun(\scD, \sSet)} (r_{\scD!} r_\scD^* R^p_\scD \hoLan_\pi X, Y)
	\\
	&\overset{(3)}{\simeq} \ul{\Fun(\scD, \sSet)} (R^p_\scD \hoLan_\pi X, Y)
	\\*
	&\overset{(4)}{\simeq} \ul{\Fun(\scD, \sSet)} (R^p_\scD \hoLan_\pi r_{\scC!} r_\scC^*X, Y)
\end{align}
In step~(1) we have use~Proposition~\ref{st:mapping spaces (gamma_C F, gamma_cG)}.
Step~(2) uses that $r_{\scD!} \dashv r_\scD^*$ is a simplicial adjunction.
In steps~(3) and~(4) we have used that $Y$ is injectively fibrant as well as that the injective model structure on $\Fun(\scD, \sSet)$ is simplicial (as in step~(3) of the proof of Proposition~\ref{st:ooRan via hoRan}).
We further have equivalences of spaces
\begin{align}
	\ul{\Fun(\scD, \sSet)} (R^p_\scD \hoLan_\pi r_{\scC!} r_\scC^*X, Y)
	&\overset{(5)}{\simeq} \ul{\Fun(\scD, \sSet)} (\hoLan_\pi r_{\scC!} r_\scC^*X, Y)
	\\
	&\overset{(6)}{\simeq} \ul{\Fun(\scD, \sSet)} (\Lan_\pi r_{\scC!} r_\scC^*X, Y)
	\\
	&\overset{(7)}{\simeq} \ul{\Fun(\scC, \sSet)} (r_{\scC!} r_\scC^*X, \pi^*Y)
	\\
	&\overset{(8)}{\simeq} \ul{\sSet_{/N\scC}} (r_\scC^*X, r_\scC^* \pi^*Y)
	\\
	&\overset{(9)}{\simeq} \scFun(N\scC, \scS) \big( \gamma_\scC X, \gamma_\scC (\pi^* Y) \big)
	\\*
	&\overset{(10)}{\simeq} \scFun(N\scC, \scS) (\gamma_\scC X, (N\pi)^* \gamma_\scD Y)\,.
\end{align}
Step~(5) again uses the injective model structure on $\Fun(\scD, \sSet)$, the weak equivalence from the identity to $R^p$ and that $Y$ is injectively fibrant.
Step~(6) uses that $r_{\scC!} r_\scC^* X$ is projectively cofibrant, so that its homotopy left Kan extension is weakly equivalent to its strict left Kan extension.
Step~(7) is where the defining property of the 1-categorical left Kan extension enters (note that the adjunction $\Lan_\pi \dashv \pi^*$ is simplicial; this holds true since, for each $Z \in \Fun(\scC, \sSet)$, the functor $Z \times (-)$ commutes with colimits).
In step~(8) we have used the simplicial adjunction $r_{\scC!} \dashv r_\scC^*$, and step~(9) is Proposition~\ref{st:mapping spaces (gamma_C F, gamma_cG)}.
Step~(9) is an application of Proposition~\ref{st:mapping spaces (gamma_C F, gamma_cG)}.
Finally, in step~(10) we have used Corollary~\ref{st:precomposition intertwines gammas}.
\end{proof}

\begin{appendix}

\section{Background on $\infty$-categorical localisation}
\label{sec:infty-localisation}

For the reader's convenience, we recall some definitions and results on localisation of $\infty$-categories from~\cite[Ch.~7]{Cisinski:HCats_HoAlg}

\begin{definition}
\label{def:class of fibs}
\emph{\cite[Def.~7.4.6]{Cisinski:HCats_HoAlg}, \cite{Cisinski:HCats_HoAlg_Errata}}
Let $\scA$ be an $\infty$-category with a fixed final object $e$.
A \emph{class of fibrations in $\scA$} is a subobject $\sfF \subset \scA$ with the following properties:
\begin{myenumerate}
\item $\sfF$ contains all identity morphisms (i.e.~all degenerate edges).

\item $\sfF$ is closed under composition.

\item For any morphisms $g \colon y' \to y$ and $f \colon x \to y$ in $\scA$ such that $y' \to e$, $y \to e$ and $x \to e$ are in $\sfF$, the pullback $y' \times_y x$ in $\scA$ exists, and, for any cartesian square
\begin{equation}
\begin{tikzcd}
	x' \ar[r] \ar[d, "f'"']
	& x \ar[d, "f"]
	\\
	y' \ar[r, "g"']
	& y
\end{tikzcd}
\end{equation}
in $\scA$, the morphism $f'$ is in $\sfF$ whenever $f$ is so.
\end{myenumerate}
Morphisms in $\sfF$ are called \emph{fibrations}.
Given an $\infty$-category with a class of fibrations $(\scA, \sfF)$, we call an object $y \in \scA$ \emph{fibrant} if the morphism $y \to e$ is in $\sfF$.
\end{definition}

In particular, since $\sfF$ contains all identity morphisms, the final object $e \in \scA$ is necessarily fibrant.

\begin{definition}
\label{def:infty-cat with weqs and fibs}
\emph{\cite[Def.~7.4.12]{Cisinski:HCats_HoAlg}}
An \emph{$\infty$-category with weak equivalences and fibrations} is a triple $(\scA, \sfW, \sfF)$, where $\sfW \subset \scA$ is an $\infty$-subcategory (in particular closed under composition) and $\sfF \subset \scA$ is a class of fibrations, such that the following properties are satisfied:
\begin{myenumerate}
\item The class $\sfW$ satisfies the two-out-of-three property.

\item For any cartesian square
\begin{equation}
\begin{tikzcd}
	x' \ar[r] \ar[d, "f'"']
	& x \ar[d, "f"]
	\\
	y' \ar[r, "g"']
	& y
\end{tikzcd}
\end{equation}
in $\scA$, in which $f$ is a fibration between fibrant objects and $y'$ is fibrant, if $f$ is additionally in $\sfW$, then so is $f'$.

\item For each map $f \colon x \to y$ in $\scA$ with $y$ fibrant, there exists a commutative diagram (a 2-simplex)
\begin{equation}
\begin{tikzcd}[column sep=1cm]
	& x' \ar[d, "p"]
	\\
	x \ar[ur, "w"] \ar[r, "f"']
	& y
\end{tikzcd}
\end{equation}
with $w \in \sfW$ and $p \in \sfF$.
\end{myenumerate}
The morphisms in $\sfW$ are called \emph{weak equivalences}.
The morphisms in $\sfF \cap \sfW$ are called \emph{trivial fibrations}.
\end{definition}

\begin{example}
\label{rmk:MoCat M gives (NM, W, F)}
Let $\scM$ be a model category.
Let $\sfW, \sfF \subset N\scM$ denote the simplicial subsets generated by those edges which are weak equivalences or fibrations in $\scM$, respectively.
The triple $(N\scM, \sfW, \sfF)$ is an $\infty$-category with weak equivalences and fibrations.
\qen
\end{example}

\begin{remark}
Suppose that $\scA$ is an $\infty$-category such that defining $\sfF$ to consists of \textit{all} morphisms in $\scA$ makes the pair $(\scA, \sfF)$ into an $\infty$-category with fibrations.
Then $\scA$ necessarily has finite limits (since it has a final object and pullbacks~\cite[Thm.~7.3.27]{Cisinski:HCats_HoAlg}).
\qen
\end{remark}

\begin{definition}
\label{def:ooCat of fib obs}
\emph{\cite[Def.~7.5.7]{Cisinski:HCats_HoAlg}}
An $\infty$-category with weak equivalences and fibrations $(\scA, \sfW, \sfF)$ in which each object is fibrant is called an \emph{$\infty$-category of fibrant objects}.
\end{definition}

\begin{example}
\label{eg:ooCat of fib obs from MoCat}
In the setting of Remark~\ref{rmk:MoCat M gives (NM, W, F)}, let $\scM_f \subset \scM$ denote the full subcategory on the fibrant objects in $\scM$.
Let $\sfW_f \subset N\scM_f$ (resp.~$\sfF_f \subset N\scM_f$) denote the simplicial subset generated by those edges in $N\scM$ which are in $\sfW$ (resp.~in $\sfF$).
Then, the triple $(N\scM_f, \sfW_f, \sfF_f)$ is an $\infty$-category of fibrant objects.
\qen
\end{example}

\begin{definition}
\label{def:Lex functor}
\emph{\cite[Def.~7.5.2]{Cisinski:HCats_HoAlg}}
Let $(\scA, \sfW_\scA, \sfF_\scA)$ and $(\scB, \sfW_\scB, \sfF_\scB)$ be two $\infty$-categories with weak equivalences and fibrations.
A functor $F \colon \scA \to \scB$ is called \emph{left exact} if it satisfies the following properties:
\begin{myenumerate}
\item $F$ preserves final objects.

\item $F$ sends fibrations \emph{between fibrant objects} to fibrations and trivial fibrations \emph{between fibrant objects} to trivial fibrations.

\item $F$ maps cartesian squares
\begin{equation}
\begin{tikzcd}
	x' \ar[r] \ar[d, "f'"']
	& x \ar[d, "f"]
	\\
	y' \ar[r, "g"']
	& y
\end{tikzcd}
\end{equation}
in $\scA$, where $f$ is a fibration and $y$ and $y'$ are fibrant, to cartesian squares in $\scB$.
\end{myenumerate}
\end{definition}

In particular, by properties (1) and (2), $F$ preserves fibrant objects.
It also follows from an adaptation of Ken Brown's Lemma to the present $\infty$-categorical context~\cite[Prop.~7.4.13]{Cisinski:HCats_HoAlg} that any left exact functor preserves weak equivalences \textit{between fibrant objects}~\cite[Cor.~7.4.14]{Cisinski:HCats_HoAlg}.
As a consequence, one obtains an $\infty$-categorical notion of derived functors between $\infty$-categorical localisations:
first, given $\infty$-categories $\scA$ and $\scB$ with collections of weak equivalences $\sfW_\scA$ and $\sfW_\scB$, respectively, as well as a functor $F \colon \scA \to \scB$ which preserves weak equivalences, let $\overline{F} \colon L_{\sfW_\scA} \scA \longrightarrow L_{\sfW_\scB} \scB$ denote the unique functor induced on localisations.

Now, let $(\scA, \sfW_\scA, \sfF_\scA)$ be an $\infty$-category with weak equivalences and fibrations, $(\scB, \sfW_\scB)$ an $\infty$-category with an $\infty$-subcategory of weak equivalences and $F \colon \scA \to \scB$ a functor of $\infty$-categories which sends weak equivalences between fibrant objects to weak equivalences.
Let $\scA_f \subset \scA$ be the full $\infty$-subcategory on the fibrant objects; this is an $\infty$-category with weak equivalences and fibrations when endowed with the restrictions of $\sfW_\scA$ and $\sfF_\scA$.
Let $R \colon L_{\sfW_\scA} \scA \to L_{\sfW_\scA} \scA_f$ be a weak inverse to the equivalence $\overline{\iota}$ induced by the inclusion $\iota \colon \scA_f \hookrightarrow \scA$ (see~\cite[Thm.~7.5.18]{Cisinski:HCats_HoAlg}) and let $\ell_\scB \colon \scB \to L_{\sfW_\scB}\scB$ denote the localisation functor.

\begin{definition}
The right derived functor of $F$ is~\cite[Par.~7.5.25, Def.~7.5.26]{Cisinski:HCats_HoAlg}
\begin{equation}
	\bbR F = R^* \overline{\ell_\scB \circ \iota^*F} = \overline{(\ell_\scB \circ F \circ \iota)} \circ R
	\colon L_{\sfW_\scA} \scA \longrightarrow L_{\sfW_\scB} \scB\,.
\end{equation}
\end{definition}

The most important result about right derived functors in the context of this paper is

\begin{theorem}
\label{st:equiv criterion for Lex functors}
\emph{\cite[Cor.~7.6.11]{Cisinski:HCats_HoAlg}}
Let $(\scA, \sfF_\scA, \sfW_\scA)$ and $(\scB, \sfF_\scB, \sfW_\scB)$ be $\infty$-categories with fibrations and weak equivalences, and let $F \colon \scA \to \scB$ be a left exact functor.
The right derived functor $\bbR F \colon L_{\sfW_\scA} \scA \to L_{\sfW_\scB} \scB$ is an equivalence if and only if it is so on the level of homotopy categories; that is, $\rmh \bbR F \colon \rmh L_{\sfW_\scA} \scA \to \rmh L_{\sfW_\scB} \scB$ is an equivalence of \emph{ordinary} categories.
\end{theorem}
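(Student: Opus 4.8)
The plan is to prove the non‑trivial implication: if $\rmh\bbR F$ is an equivalence of ordinary categories, then $\bbR F$ is an equivalence of $\infty$-categories (the converse being immediate, since any equivalence of $\infty$-categories descends to an equivalence of homotopy categories). First I would reduce to the case where every object is fibrant. Since $F$ is left exact it preserves the final object and fibrations between fibrant objects, hence preserves fibrant objects, so it restricts to a functor $F_f\colon\scA_f\to\scB_f$ on the full subcategories of fibrant objects; by the $\infty$-categorical Ken Brown lemma \cite[Cor.~7.4.14]{Cisinski:HCats_HoAlg} the functor $F_f$ preserves weak equivalences and therefore induces a functor $\overline{F_f}$ on localisations. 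By \cite[Thm.~7.5.18]{Cisinski:HCats_HoAlg} the inclusions $\scA_f\hookrightarrow\scA$ and $\scB_f\hookrightarrow\scB$ become equivalences after localisation, and unwinding the definition of $\bbR F$ recalled just above the statement shows that $\bbR F$ is an equivalence if and only if $\overline{F_f}$ is, and likewise that $\rmh\bbR F$ is an equivalence if and only if $\rmh\overline{F_f}$ is. So from now on I assume that $\scA$ and $\scB$ are $\infty$-categories of fibrant objects, that $F$ preserves weak equivalences, and that $\bbR F=\overline{F}$ is the induced functor on localisations.

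Next I would split ``$\overline F$ is an equivalence'' into essential surjectivity and full faithfulness. Essential surjectivity of $\overline F$ is equivalent to essential surjectivity of $\rmh\overline F$ — the localisation functors $\scA\to L_{\sfW_\scA}\scA$ and $\scB\to L_{\sfW_\scB}\scB$ are essentially surjective, and essential surjectivity of a functor is detected on homotopy categories — so it holds by hypothesis. Hence the whole content becomes: assuming $\rmh\overline F$ is fully faithful, show that $\overline F$ is fully faithful, i.e.\ that for all $x,y\in\scA$ the comparison map
\begin{equation}
	\Map_{L_{\sfW_\scA}\scA}(x,y)\longrightarrow\Map_{L_{\sfW_\scB}\scB}(Fx,Fy)
\end{equation}
is a weak homotopy equivalence, given that it is a bijection on $\pi_0$.

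The upgrade from $\pi_0$ to the full mapping space is where the left‑exact structure is used, and here I would invoke the structural features of $L_{\sfW_\scA}\scA$ and $L_{\sfW_\scB}\scB$ established in \cite[Ch.~7]{Cisinski:HCats_HoAlg}: these localisations are finitely complete, the localisation functors send cartesian squares with a fibration leg to cartesian squares, and $\overline F$ is left exact for these structures. Granting this, I would run a dévissage: for a fibrant $y$ a path object $y\xrightarrow{\,\sim\,}Py\twoheadrightarrow y\times y$ consists of fibrant objects and is carried by $F$ to a path object of $Fy$ (the product $y\times y$ and the fibration $Py\to y\times y$ are preserved, and $y\to Py$ is a weak equivalence between fibrant objects), so that the based loop space of $\Map_{L_{\sfW_\scA}\scA}(x,y)$ at any vertex $f\colon x\to y$ is identified — through these preserved cartesian squares — with a mapping space in an auxiliary ``derived slice'' $\infty$-category of fibrant objects built from $\scA$, on which $F$ again induces a left exact, weak‑equivalence‑preserving functor compatibly with $\overline F$. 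Iterating identifies every homotopy group of the comparison map, at every basepoint, with the $\pi_0$ of a comparison between such auxiliary mapping spaces, and these $\pi_0$-comparisons are bijections by applying the hypothesis on $\rmh\overline F$ in the auxiliary categories; hence the comparison map is an equivalence. (Equivalently, one may route this through homotopy function complexes computed by simplicial frames, noting that $F$ transports a simplicial frame $y^{\Delta^\bullet}$ to a simplicial frame for $Fy$ because the matching objects are finite limits along fibrations and are preserved.) Combined with the previous paragraph, $\overline F$ is both essentially surjective and fully faithful, hence an equivalence.

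The main obstacle is exactly this last step, the passage from $\pi_0$-information to the full homotopy type of all mapping spaces. A naive induction over homotopy groups using ordinary slice $\infty$-categories is circular: the homotopy category of a slice $\infty$-category is not the slice of the homotopy category, so ``$\rmh\overline F$ fully faithful'' does not visibly pass to slices, which is precisely the input one would want in the inductive step. What rescues the argument is the rigidity furnished by the left‑exact/fibration structure — in effect a Whitehead‑type theorem for finitely complete homotopy theories — and this is the one genuinely non‑formal ingredient, to be taken from Cisinski's Chapter 7; the remaining parts (the trivial direction, the reduction to fibrant objects, and the essential‑surjectivity reduction) are bookkeeping.
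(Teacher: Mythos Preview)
The paper does not prove this theorem: it is stated in the appendix purely as a citation of \cite[Cor.~7.6.11]{Cisinski:HCats_HoAlg}, with no argument given. There is therefore no ``paper's own proof'' to compare against.

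As for your sketch on its own merits: the reductions you make (to fibrant objects via \cite[Thm.~7.5.18]{Cisinski:HCats_HoAlg}, and to full faithfulness since essential surjectivity is detected on $\rmh$) are correct and standard. The substantive step is exactly the one you flag yourself: upgrading the $\pi_0$-bijection on mapping spaces to a full equivalence. Your loop-space/path-object d\'evissage is the right shape, but as written it is not a proof---you acknowledge that the induction does not go through na\"ively because $\rmh$ of a slice is not a slice of $\rmh$, and you resolve this by invoking ``the rigidity furnished by the left-exact/fibration structure\ldots to be taken from Cisinski's Chapter~7''. That is precisely the content of the cited corollary, so at that point your argument is circular: you are appealing to the theorem (or its immediate neighbours in Cisinski) in order to prove it. If the goal was only to explain why one should believe the result and where the difficulty lies, your write-up does that well; if the goal was an independent proof, the key step is still missing.
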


\end{appendix}

\begin{small}

\makeatletter

\interlinepenalty=10000

\makeatother

\bibliographystyle{alphaurl}
\addcontentsline{toc}{section}{References}
\bibliography{Space_loc_Bib}

\vspace{0.5cm}

\noindent
Mathematical Institute, University of Oxford.
\\
severin.bunk@maths.ox.ac.uk

\end{small}

\end{document}